\documentclass[12pt]{article}
\usepackage[utf8]{inputenc}
\usepackage{amsfonts}
\usepackage{scrextend}
\usepackage{mathtools}
\usepackage{float}
\usepackage{amsmath}
\usepackage[margin=1.2in]{geometry}
\usepackage{authblk}
\usepackage{amssymb}
\usepackage{epigraph}
\usepackage[english]{babel}
\usepackage[nottoc]{tocbibind}
\usepackage{tikz-cd}
\usepackage{graphicx}
\usetikzlibrary{matrix}
\usepackage{amsthm, enumitem, cleveref}
\usepackage[mathscr]{euscript}
 \let\mathscr\relax
\usepackage[scr]{rsfso}
\usepackage{comment}

\usepackage{bm,bbm, mathdots, accents}

\theoremstyle{definition}
\newtheorem{nummer}{ }[section]

\newtheorem{thm}[nummer]{\sc Theorem}

\newtheorem{prp}[nummer]{\sc Proposition}
\newtheorem{lem}[nummer]{\sc Lemma}
\newtheorem{cor}[nummer]{\sc Corollary}
\newtheorem{claim}[nummer]{\sc Claim}
\newtheorem{fct}[nummer]{\sc Fact}

\newtheorem{defi}[nummer]{\sc Definition}

\newtheorem*{rmk*}{\sc Remark}
\newtheorem*{obs}{\sc Observation}

\newtheorem{problem}{\sc Problem}

\newcounter{faelle} 
\renewcommand{\thefaelle}{\rm(\alph{faelle})}

\renewcommand\qed{\relax\ifmmode~\hfill$\dashv$\else\unskip\nobreak~\hfill$\dashv$\fi}

\def\epsilon{\varepsilon}

\newcommand\ran{\operatorname{ran}}
\newcommand\dom{\operatorname{dom}}

\newcommand\seq{\operatorname{seq}}

\renewcommand{\phi}{\varphi}
\renewcommand{\theta}{\vartheta}

\newcommand\level{\text{level}}

\newcommand\cls{\text{cl}}

\begin{document}

\begin{center}
{\Large\sc Laver Ultrafilters}\\[1.8ex]


{\small Silvan Horvath}\\[1.2ex] 
{\scriptsize Department of Mathematics, ETH Z\"urich, 
8092 Z\"urich, Switzerland\\ 
silvan.horvath@math.ethz.ch}\\[1.8ex]

{\small Tan {\"O}zalp}\\[1.2ex] 
{\scriptsize Department of Mathematics, University of Notre Dame,
Notre Dame, IN 46556, USA\\
aozalp@nd.edu}\\[1.8ex]

\end{center}


\begin{quote}
{\small {\bf Abstract.} We introduce \emph{Laver ultrafilters}, namely ultrafilters $\mathcal{U}$ for which the associated Laver forcing $\mathbb{L}_{\mathcal{U}}$ has the Laver property. We give simple combinatorial characterisations of these ultrafilters, which allow us to analyse their position among several well-studied combinatorial classes, including $P$-points, rapid ultrafilters, and ultrafilters arising in Baumgartner’s $\mathcal{I}$-ultrafilter framework. In particular, we show that the class of Laver ultrafilters properly contains the class of rapid $P$-points and that it is properly contained both in the class of hereditarily rapid- and in the class of measure zero ultrafilters. Finally, we investigate the (generic) existence of Laver ultrafilters and establish bounds on their generic existence number. In particular, we show that it is consistent that $P$-points do not exist while Laver ultrafilters exist generically.

}
\end{quote}

\begin{quote}
\small{{\bf Key-words and phrases\/}: Laver forcing, Laver property, ultrafilter, Martin's axiom, P-point, rapid, generic existence, cardinal invariant of the continuum, proper forcing, countable support iteration.}\\
\small{\bf 2020 Mathematics Subject Classification\/}: { 03E05}\ {03E17}\ {03E35}
\end{quote}


\setcounter{section}{0}
\section{Introduction and Preliminaries}

Laver forcing originates in Richard Laver’s seminal work on the consistency of the Borel conjecture and the development of iterated forcing techniques \cite{laver}. Since then, it has become a fundamental tool in the study of the set theory of the real line.

One of the key features of Laver forcing is the \emph{Laver property}. This property is important because it ensures a certain degree of control over the reals added by the forcing -- most notably, it prevents the addition of Cohen- and random reals. Moreover, the Laver property is preserved under countable support iterations of proper forcing notions.

\begin{defi}
A forcing notion $\mathbb{P}$ has the \textit{Laver property} if the following holds:

Assume that $\undertilde{f}$ is a $\mathbb{P}$-name for an element of $\omega^\omega$ such that there exists $g \in \omega^\omega$ with $\Vdash_{\mathbb{P}}\undertilde{f}<g$. Then, $\mathbb{P}$ forces that there exists some $S \in \prod_{n \in \omega} [g(n)]^{\leq n+1} $ in the ground model with $\undertilde{f}(n)\in S(n)$ for each $n\in \omega$. \footnote{The bound $n+1$ here is somewhat arbitrary, i.e., it can be replaced by any unbounded $h: \omega \to \omega \setminus \{0\}$. The necessary argument is similar to the proof of Fact \ref{fct:bound}.}
\end{defi}

An important variant of Laver forcing $\mathbb{L}$ is obtained by considering its relativization to an ultrafilter $\mathcal{U}$ on $\omega$, denoted $\mathbb{L}_{\mathcal{U}}$.

\begin{defi}
$\mathbb{L}_{\mathcal{U}}$ is the forcing notion consisting of trees $T \subseteq \omega^{<\omega}$ such that for each $s \in T$ with $s \supseteq \text{stem}(T)$,
\[\text{succ}_T(s):=\{n \in \omega: s^{\smallfrown}n \in T\}\in \mathcal{U},\] ordered by reverse inclusion.
\end{defi}

At this stage, it is natural to ask for which ultrafilters $\mathcal{U}$ the forcing notion $\mathbb{L}_{\mathcal{U}}$ has the Laver property. The main objective of this paper is to study these ultrafilters, which we will call \emph{Laver ultrafilters}.

A similar question arises for the closely related (relativized) Mathias forcing $\mathbb{M}_{\mathcal{U}}$, since its non-relativized variant also has the Laver property, and since $\mathbb{M}_{\mathcal{U}}$ and $\mathbb{L}_{\mathcal{U}}$ are forcing equivalent if $\mathcal{U}$ is a Ramsey ultrafilter\footnote{An ultrafilter $\mathcal{U}$ on $\omega$ is called \emph{Ramsey} or \emph{selective} if it contains witnesses to Ramsey's theorem for pairs.}. In this setting, it is well-known that $\mathbb{M}_{\mathcal{U}}$ retains the Laver property if and only if $\mathcal{U}$ is in fact a Ramsey ultrafilter. In contrast, much less is known about the corresponding question for $\mathbb{L}_{\mathcal{U}}$. Previous work includes the result of B{\l}aszczyk and Shelah \cite{blaszczyk2001regular} showing that $\mathbb{L}_{\mathcal{U}}$ does not add Cohen reals if and only if $\mathcal{U}$ is a \emph{nowhere dense ultrafilter} (defined below). Very recently, Nieto-de la Rosa, Guzm\'an and Ramos-Garc\'ia \cite{nietoguzmanramos26} independently explored a closely related research direction: Among various other results, they characterise the ideals $\mathcal{I}$ on $\omega$ for which the forcing notion $\mathbb{L}_{\mathcal{I}^{+}}$ has the Laver property, in terms of the Kat\v{e}tov order.   One of their results states that it is consistent that there exists a $P$-point $\mathcal{U}$ such that $\mathbb{L}_{\mathcal{U}}$ neither adds Cohen reals nor has the Laver property. As a consequence of our Propositions \ref{prp:rapid} and \ref{prp:semiselective}, this phenomenon occurs precisely in case the $P$-point $\mathcal{U}$ is not simultaneously a rapid ultrafilter.

Let us now recall some of the well-studied combinatorial classes of ultrafilters.

\begin{defi}
Let $\mathcal{U}$ be an ultrafilter over $\omega$.
\begin{enumerate}[label=(\roman*)]
\item $\mathcal{U}$ is called a $P$-point if for every sequence $\langle x_n: n \in \omega\rangle$ of elements of $\mathcal{U}$, there exists $x\in \mathcal{U}$ with $\forall n \in \omega: x \subseteq^{\ast}x_n$.
\item $\mathcal{U}$ is called \emph{rapid} if for every strictly increasing $f\in \omega^\omega$, there exists $x\in \mathcal{U}$ with $\forall n \in \omega: |x \cap f(n)|\leq n$.
\item Let $\mathcal{I}$ be a collection of subsets of some set $X$, such that $\mathcal{I}$ is closed under subsets and contains all singletons. Baumgartner \cite{baumgartner-uf} introduced the following concept: an ultrafilter $\mathcal{U}$ over $\omega$ is called an \textit{$\mathcal{I}$-ultrafilter} if for every $F: \omega \to X$, there exists some $x\in \mathcal{U}$ such that $F[x]\in \mathcal{I}$.
\end{enumerate}
\end{defi}

Baumgartner investigated $\mathcal{I}$-ultrafilters for various such collections $\mathcal{I}$ on $X=2^\omega$. Examples include the discrete sets, the scattered sets, the sets with closure of measure zero and the nowhere dense sets\footnote{The reader may consult Baumgartner's paper for the details and precise definitions concerning this framework.}. The corresponding $\mathcal{I}$-ultrafilters are known as discrete-, scattered-, measure zero- and nowhere dense ultrafilters, respectively. Note that all of these ultrafilter classes contain the $P$-points.

Apart from the simple combinatorial characterisation of Laver ultrafilters given in Definition \ref{def:laver} and Theorem \ref{thm:laverprop}, we will characterise Laver ultrafilters as $\mathcal{I}$-ultrafilters for a certain class of ideals $\mathcal{I}$ on $2^\omega$. We will later use the fact that one may equivalently consider these ideals on the countable subspace $X=\mathbb{Q}\subseteq 2^\omega$ of binary sequences that are eventually zero --  aligning with the work on $\mathcal{I}$-ultrafilters of Brendle \cite{brendle-between}, Barney \cite{barney-uf}, and Brendle-Fla\v{s}kov\'a \cite{brefla17}.

By the aforementioned result of B{\l}aszczyk and Shelah \cite{blaszczyk2001regular}, Laver ultrafilters must in particular be nowhere dense. We show that they in fact have the stronger property of being measure zero ultrafilters. In fact, for each so-called \textit{Yorioka ideal} $\mathcal{Y}_f$, Laver ultrafilters are $\mathcal{Y}_f^0$-ultrafilters, where $\mathcal{Y}_f^0$ consists of those subsets of $2^\omega$ with closure in $\mathcal{Y}_f$. Yorioka ideals are approximations of the strong measure zero ideal $\mathcal{SN}$, that is $\mathcal{SN}:=\bigcap_{f\in \omega^\omega}\mathcal{Y}_f$.

A natural question that might arise is whether Laver ultrafilters are $\mathcal{I}$-ultrafilters for the ideal $\mathcal{I}$ consisting of the sets with closure of strong measure zero. Note however that a closed set has strong measure zero if and only if it is countable, hence the above ideal simply consists of the sets with countable closure. The corresponding $\mathcal{I}$-ultrafilters are known as \textit{countable closed ultrafilters} and were previously investigated by Barney \cite{barney-uf} and Brendle \cite{brendle-between}. We show that a fragment of $\textsf{MA}$ implies the existence of a Laver ultrafilter $\mathcal{U}$ that is non-scattered, which in particular implies that $\mathcal{U}$ is not countable closed. The reader may consult Figure 2 in \cite{brefla17} for a diagram of provable inclusions among various classes of $\mathcal{I}$-ultrafilters.

Finally, we will consider the (generic) existence of Laver ultrafilters. A class of ultrafilters $\mathcal{C}$ is said to \emph{exist generically} if every filter base of cardinality strictly less than $\mathfrak{c}$ can be extended to an ultrafilter in the class $\mathcal{C}$ (c.f. Canjar \cite{Canjar-gen}). Generic existence of an ultrafilter class is generally a more well-behaved phenomenon than mere existence, in the sense that it typically results from straightforward recursive constructions, made possible by certain (in)equalities between cardinal characteristics of the continuum. We prove upper- and lower bounds on the characteristic $\mathfrak{ge}(\text{Laver})$, which is defined to capture the generic existence of Laver ultrafilters. More specifically, we show that $\text{cov}(\mathcal{M}), \text{non}(\mathcal{NA}) \leq \mathfrak{ge}(\text{Laver})\leq \text{non}(\mathcal{SN}), \max\{\text{non}(\mathcal{E}), \mathfrak{d}\}$.\footnote{These cardinal characteristics will be defined in Section \ref{sec:existence}.}

Furthermore, we examine the existence of Laver ultrafilters in various standard models of set theory. In particular, we are interested in the relationship between the existence of $P$-points and the existence of Laver ultrafilters. By our Proposition \ref{prp:rapid}, Laver ultrafilters do not exist in models without rapid ultrafilters, such as the Laver-, Mathias- or Miller models. In each of these, $P$-points exist generically.\footnote{In every model with $\mathfrak{c}\leq \omega_2$, there either exist rapid ultrafilters or $P$-points exist generically. This follows from results of Canjar \cite{Canjar-gen} and Ketonen \cite{ketonen1976existence}.} We show that Laver ultrafilters do not exist in the Silver model, which was shown not to contain $P$-points by Chodounsk\'y and Guzm\'an \cite{chodguz-ppoints}. On the other hand, we show that it is also consistent that $P$-points do not exist while Laver ultrafilters exist generically.

The paper is organized as follows.
\begin{itemize}
    \item In section \S\ref{section2} we give a purely combinatorial definition of Laver ultrafilters and obtain several alternative characterisations. We then show that these indeed define the class of ultrafilters for which the associated Laver forcing has the Laver property.
    \item In section \S\ref{section3}, we present various properties of Laver ultrafilters, such as closure under ultrafilter sums, and establish connections to other well-studied classes of ultrafilters. Under $\textsf{MA}(\text{$\sigma$-linked})$, we construct a Laver ultrafilter that is non-scattered.
    \item In the final section \S\ref{sec:existence}, we investigate the (generic) existence of Laver ultrafilters, define their generic existence number $\mathfrak{ge}(\text{Laver})$ and prove upper- and lower bounds on this cardinal characteristic. We examine the existence of Laver ultrafilters in various well-known models of \textsf{ZFC}, and construct a model without $P$-points in which Laver ultrafilter exist generically. We conclude with some open problems.
\end{itemize}

\subsection*{Notation and Terminology}
We use standard set theoretical notation. For a set $A$ and a cardinal $\kappa$, $[A]^{\kappa}$ denotes the set of subsets of $A$ of cardinality $\kappa$, and $A^\kappa$  the set of functions from $\kappa$ to $A$. In some cases, when the context is clear, we will write $2^k$ to mean $|2^k|$. We denote by $\seq^{<\omega}(\omega)$ the set of finite strictly increasing sequences of natural numbers. The set $\mathbb{Q}$ consists of those $x\in 2^\omega$ with $\exists n \forall m \geq n: x(m)=0$. A basis for the topology on $X=2^\omega$ or its subspace $X=\mathbb{Q}$ is given by the sets $[s]:=\{x \in X: x|_{|s|}=s\}$ for $s\in 2^{<\omega}$.

A \emph{tree} is a nonempty set $T$ of finite sequences that is closed under taking initial segments. As is standard, $\operatorname{stem}(T)$ denotes the longest element of $T$ that is \emph{compatible} with every member of $T$, where $s,t\in T$ are compatible if one extends the other. For $s\in T$, $T|_s:=\{t\in T : \; \text{$s$ and $t$ are compatible}\}$.


If $\mathcal{I}$ is an ideal on a set $X$, then $\mathcal{I}^{+}=\mathcal{P}(X)\setminus \mathcal{I}$. By an \emph{ultrafilter}, we always mean a non-principal ultrafilter. 

\section{Characterisations of Laver Ultrafilters}\label{section2}

We will begin by defining Laver ultrafilters in purely combinatorial terms. Theorem \ref{thm:laverprop} shows that this definition indeed captures the Laver property of $\mathbb{L}_{\mathcal{U}}$.

\begin{defi}\label{def:laver}
Let $\mathcal{U}$ be an ultrafilter over $\omega$. We say that $\mathcal{U}$ is a \textit{Laver ultrafilter} if the following holds: For every sequence $\langle \mathcal{P}_n: n \in \omega\rangle$, where each $\mathcal{P}_n$ is a partition of $\omega$ into finitely many sets, there exists $x \in \mathcal{U}$ such that for all $n \in \omega$, $x$ has non-empty intersection with at most $n+1$ elements of $\mathcal{P}_n$.
\end{defi}

Analogously to the definition of the Laver property, the bound $n+1$ above can be replaced by $h(n)+1$ for any non-decreasing, unbounded $h\in \omega^\omega$.

\begin{fct}\label{fct:bound}
	Let $\mathcal{U}$ be an ultrafilter and let $h_0$ and $h_1$ be two non-decreasing, unbounded functions from $\omega$ to $\omega$. Assume that for any sequence $\langle \mathcal{P}_n: n \in \omega\rangle$ of finite partitions of $\omega$, there exists $x \in \mathcal{U}$ such that $x$ meets at most $h_0(n)+1$ elements of each $\mathcal{P}_n$. Then, for any $\langle \mathcal{P}_n: n \in \omega\rangle$ as above, there exists $y \in \mathcal{U}$ such that $y$ meets at most $h_1(n)+1$ elements of each $\mathcal{P}_n$.
\end{fct}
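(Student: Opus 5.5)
The plan is a reindexing (grouping) argument: we code several partitions $\mathcal{P}_n$ into one partition $\mathcal{Q}_m$ of the hypothesis, so that the bound $h_0(m)+1$ on $\mathcal{Q}_m$ is at most $h_1(n)+1$ for every $n$ coded there. The finitely many indices $n$ where $h_1(n)$ is too small to absorb any $h_0$-bound are handled separately, using only that $\mathcal{U}$ is an ultrafilter.

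\emph{Step 1 (the small indices).} Since $h_1$ is non-decreasing and unbounded, fix $N_0$ such that $h_1(n)\geq h_0(0)$ for all $n\geq N_0$. For each $n<N_0$ the partition $\mathcal{P}_n$ is finite and $\mathcal{U}$ is an ultrafilter, so exactly one piece $A_n\in\mathcal{P}_n$ lies in $\mathcal{U}$. Put $z:=\bigcap_{n<N_0}A_n\in\mathcal{U}$. Any subset of $z$ then meets exactly one element of $\mathcal{P}_n$ for each $n<N_0$, which is at most $h_1(n)+1$.

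\emph{Step 2 (the assignment $n\mapsto m(n)$).} For $n\geq N_0$ define
\[m(n):=\max\{m\leq n: h_0(m)\leq h_1(n)\}.\]
This is well defined because $m=0$ qualifies. The key point is that every fibre $m^{-1}(\{m\})$ is finite. To see this, fix $M$. Since $h_1$ is unbounded and non-decreasing, $h_1(n)\geq h_0(M)$ for all sufficiently large $n$. For such $n\geq M$ we get $m(n)\geq M$, so $m(n)\to\infty$. For each $m$, let $\mathcal{Q}_m$ be the common refinement of the finitely many $\mathcal{P}_n$ with $m(n)=m$, and let $\mathcal{Q}_m=\{\omega\}$ if there are none. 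Each $\mathcal{Q}_m$ is a finite partition of $\omega$, and it refines every $\mathcal{P}_n$ with $m(n)=m$.

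\emph{Step 3 (conclusion).} Apply the hypothesis to $\langle\mathcal{Q}_m:m\in\omega\rangle$ to obtain $x\in\mathcal{U}$ meeting at most $h_0(m)+1$ elements of each $\mathcal{Q}_m$. Let $y:=x\cap z\in\mathcal{U}$. For $n<N_0$ the required bound holds by Step 1. For $n\geq N_0$, each piece of $\mathcal{P}_n$ met by $y$ contains a piece of $\mathcal{Q}_{m(n)}$ met by $y$, because $\mathcal{Q}_{m(n)}$ refines $\mathcal{P}_n$. Hence $y$ meets at most $h_0(m(n))+1\leq h_1(n)+1$ elements of $\mathcal{P}_n$.

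The only delicate point is choosing $m(n)$ so that two requirements hold at once: $h_0(m(n))\leq h_1(n)$, and the fibres are finite so that each $\mathcal{Q}_m$ is a finite partition. Finiteness of the fibres is exactly where the unboundedness of $h_1$ enters. The possible failure of $h_0(0)\leq h_1(n)$ for small $n$ is what forces the separate treatment in Step 1.
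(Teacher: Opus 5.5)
Your proof is correct and is essentially the paper's argument. The paper likewise groups the indices into finite blocks, namely the intervals $[k_i,k_{i+1})$ on which $h_1>h_0(i)$, and codes each block by one partition, $\mathcal{P}_{k_{i+1}}$, after assuming without loss of generality that the $\mathcal{P}_n$ successively refine one another; you use common refinements over the finite fibres of $m$ instead. It also handles the initial segment, as you do, by intersecting with the unique $\mathcal{U}$-piece.
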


\begin{proof}
For each $n \in \omega$, let $k_n \geq n$ be such that $\forall m \geq k_n: h_1(m) > h_0(n)$. Let $\langle \mathcal{P}_n: n \in \omega\rangle$ be a sequence of finite partitions of $\omega$. Without loss of generality, we may assume that each $\mathcal{P}_{n+1}$ refines $\mathcal{P}_n$.

Define ${\mathcal{P}_i}':=\mathcal{P}_{k_{i+1}}$ for each $i \in \omega$. By assumption, there exists $x \in \mathcal{U}$ such that $x$ meets at most $h_0(i)+1$ elements of each $\mathcal{P}_i'$. Let $y$ be the intersection of $x$ with the unique element of $\mathcal{P}_{k_0}$ that lies in $\mathcal{U}$. Hence, for any $n \leq k_0$, $y$ meets exactly one element of $\mathcal{P}_n$. If $n > k_0$, then there is a unique $i \in \omega$ such that $n \in [k_i, k_{i+1})$. Since $n \geq k_i$, $h_1(n)> h_0(i)$, and therefore, $y$ meets at most $h_1(n)$ elements of $\mathcal{P}_{i}'=\mathcal{P}_{k_{i+1}}$. Since $\mathcal{P}_n$ is coarser than $\mathcal{P}_{k_{i+1}}$, $y$ meets at most $h_1(n)$ elements of $\mathcal{P}_n$.
\end{proof}

Next, we give an alternative characterisation of Laver ultrafilters, which will allow us to place them in Baumgartner's $\mathcal{I}$-ultrafilter framework.

\begin{defi}
For $A \subseteq 2^\omega$ and $n\in \omega$, define
\[\level_A(n):=|\{f|_n: f \in A\}|.\]	
\end{defi}

\begin{lem}\label{lem:Ff}
$\mathcal{U}$ is a Laver ultrafilter if and only if for any $F:\omega \to 2^\omega$ and any non-decreasing, unbounded $f: \omega \to \omega$, there exists $x \in \mathcal{U}$ such that 
\[\forall n \in \omega: \level_{F[x]}(n)\leq f(n)+1.\]
\end{lem}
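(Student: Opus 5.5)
Both directions go through the translation between finite partitions of $\omega$ and the level structure of maps into $2^\omega$, using Fact \ref{fct:bound} to adjust the bounding function.

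\textbf{($\Rightarrow$).} Let $\mathcal{U}$ be Laver, $F:\omega\to 2^\omega$, and $f$ non-decreasing and unbounded. For each $n$, let $\mathcal{P}_n$ be the partition of $\omega$ into the nonempty sets $\{k: F(k)|_n = s\}$ for $s\in 2^n$. This partition has at most $2^n$ pieces, so it is finite.

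If $f$ is bounded below by $n\mapsto n$, the definition applies directly. For general $f$, I apply Fact \ref{fct:bound} with $h_0(n)=n$ and $h_1=f$. This gives $x\in\mathcal{U}$ meeting at most $f(n)+1$ pieces of each $\mathcal{P}_n$. Then the number of distinct restrictions $F(k)|_n$ for $k\in x$ is at most $f(n)+1$, that is, $\level_{F[x]}(n)\le f(n)+1$.

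\textbf{($\Leftarrow$).} Given finite partitions $\langle\mathcal{P}_n\rangle$, I may assume each $\mathcal{P}_{n+1}$ refines $\mathcal{P}_n$ by replacing $\mathcal{P}_n$ with the common refinement of $\mathcal{P}_0,\dots,\mathcal{P}_n$. This is harmless, since meeting at most $n+1$ pieces of the refinement implies meeting at most $n+1$ pieces of $\mathcal{P}_n$.

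Next I code the refining sequence into $2^\omega$. Choose increasing lengths $\ell_0<\ell_1<\cdots$ with $2^{\ell_{n}-\ell_{n-1}}$ at least the number of pieces of $\mathcal{P}_n$ into which any piece of $\mathcal{P}_{n-1}$ splits. Define $F(k)$ so that $F(k)|_{\ell_n}$ encodes the sequence of pieces of $\mathcal{P}_0,\dots,\mathcal{P}_n$ containing $k$: on the block $[\ell_{n-1},\ell_n)$, write the index of $k$'s $\mathcal{P}_n$-piece among the children of its $\mathcal{P}_{n-1}$-piece. By refinement, two points $k,k'$ lie in the same piece of $\mathcal{P}_n$ iff $F(k)|_{\ell_n}=F(k')|_{\ell_n}$. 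Hence the number of $\mathcal{P}_n$-pieces met by $x$ equals $\level_{F[x]}(\ell_n)$.

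Now let $f(m)=n$ for $m\in[\ell_n,\ell_{n+1})$, and $f(m)=0$ for $m<\ell_0$. This $f$ is non-decreasing and unbounded. The hypothesis gives $x\in\mathcal{U}$ with $\level_{F[x]}(\ell_n)\le f(\ell_n)+1=n+1$, so $x$ meets at most $n+1$ pieces of $\mathcal{P}_n$, which is the Laver condition.

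The main point needing care is this coding step, specifically arranging that equality of initial segments at level $\ell_n$ exactly captures membership in the same $\mathcal{P}_n$-piece. That is why the reduction to refining partitions is done first; once it is in place, the rest is bookkeeping.
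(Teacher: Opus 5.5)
Your proof is correct and follows the paper's argument. The forward direction is exactly the paper's: partition $\omega$ by the value of $F(i)|_n$ and apply Fact \ref{fct:bound}. The reverse direction likewise codes the partitions blockwise into $2^\omega$ and uses a step function $f$. The one small difference is that the paper concatenates independent codes for each $\mathcal{P}_n$-piece without first passing to refinements, since only the inequality ``number of $\mathcal{P}_n$-pieces met by $x$ $\leq \level_{F[x]}$'' is needed; your refinement step and the exact equality you get from it are harmless but not necessary.
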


\begin{proof}
	Assume that $\mathcal{U}$ is a Laver ultrafilter and that $F,f$ are as in the statement of the lemma. For each $n \in \omega$ let $\mathcal{P}_n:=\{P_s: s \in 2^n\}$, where $P_s=\{i \in \omega: F(i)|_n=s\}$. By Fact \ref{fct:bound}, there exists $x \in \mathcal{U}$ such that $x$ meets at most $f(n)+1$ elements of each $\mathcal{P}_n$, i.e., $\level_{F[x]}(n)\leq f(n)+1$.
	
	To check the reverse direction, let $\langle\mathcal{P}_n: n \in \omega \rangle$ be any sequence of finite partitions of $\omega$. Without loss of generality, we may assume that the cardinality of each $\mathcal{P}_n$ is $2^{k_n}$ for some $k_n > 0$. Enumerate $\mathcal{P}_n$ as 
	\[\mathcal{P}_n=\{P_n^s: s \in 2^{k_n}\},\]
For $i,n \in \omega$, let $s_n^i \in 2^{k_n}$ be such that $i \in P_n^{s_n^i}$. Define $F: \omega \to 2^\omega$ by setting
\[F(i):=(s_0^i) {^{\smallfrown}} (s_1^i) {^{\smallfrown}} (s_2^i) {^{\smallfrown}} ...,\]
and let $f: \omega \to \omega$ be given by $f|_{[0,k_0]}\equiv 0$ and for each $n > 0$
\[f|_{\left(\sum_{n' < n}k_{n'},\; \sum_{n' \leq n}k_{n'}\right]}\equiv n.\]
Now, any $x \in \mathcal{U}$ satisfying $\forall n \in \omega: \level_{F[x]}(\sum_{n'\leq n}k_{n'})\leq f(\sum_{n'\leq n}k_{n'})+1=n+1$ will intersect at most $n+1$ elements of $\mathcal{P}_n$.
\end{proof}

Lemma \ref{lem:Ff} allows us to characterise Laver ultrafilters in terms of ideals on $2^\omega$.

\begin{defi}
Write
\[\mathcal{H}:=\{f \in \omega^{\omega}: f \text{ is non-decreasing, unbounded and } \forall n \in \omega: f(n)\leq n\}.\]
For each $f \in \mathcal{H}$, define 
\[\mathcal{I}_f:=\{A \subseteq 2^\omega: \forall d>0: \level_A \leq^{\ast} \frac{1}{d} \,f\}.\]
It is not hard to check that each such $\mathcal{I}_f$ is an ideal.
\end{defi}

\begin{prp}\label{prp:baumgartnerstyle}
	$\mathcal{U}$ is Laver if and only if $\mathcal{U}$ is an $\mathcal{I}_f$-ultrafilter for each $f \in \mathcal{H}$.
\end{prp}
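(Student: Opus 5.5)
Both directions will go through Lemma \ref{lem:Ff}, and the main task is to translate between the bound "$\level_{F[x]}\leq f+1$ everywhere" and membership $F[x]\in\mathcal{I}_f$, i.e. "$\level_{F[x]}\leq^\ast \frac1d f$ for every $d>0$".

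For the forward direction, assume $\mathcal{U}$ is Laver and fix $f\in\mathcal{H}$ and $F:\omega\to 2^\omega$. The key choice is a single non-decreasing, unbounded function $g$ that is eventually dominated by every $\frac1d f$. Since $f$ is non-decreasing and unbounded, choose thresholds $m_1<m_2<\dots$ with $f(n)\geq d(d+1)$ for all $n\geq m_d$. Set $g(n)=0$ for $n<m_1$ and $g(n)=d-1$ for $n\in[m_d,m_{d+1})$. Then $g$ is non-decreasing and unbounded. Moreover, for every $d$ and every $n\geq m_d$, say $n\in[m_e,m_{e+1})$ with $e\geq d$, we get
\[g(n)+1=e\leq \frac{e(e+1)}{d}\leq \frac1d f(n).\]
By Lemma \ref{lem:Ff} there is $x\in\mathcal{U}$ with $\level_{F[x]}\leq g+1$ everywhere. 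Hence $\level_{F[x]}\leq^\ast\frac1d f$ for every $d$, so $F[x]\in\mathcal{I}_f$. This shows $\mathcal{U}$ is an $\mathcal{I}_f$-ultrafilter.

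For the converse, assume $\mathcal{U}$ is an $\mathcal{I}_f$-ultrafilter for every $f\in\mathcal{H}$. We verify the criterion of Lemma \ref{lem:Ff}; by Fact \ref{fct:bound} (transported through the proof of Lemma \ref{lem:Ff}) it suffices to do so for one convenient target function. So fix $F$ and some non-decreasing, unbounded $h$, and replace $h$ by $\min(h(n),n)$, which lies in $\mathcal{H}$ and only makes the requirement stronger. Apply the hypothesis with $f=h$ and $d=1$ to get $x\in\mathcal{U}$ and $N$ with $\level_{F[x]}(n)\leq h(n)$ for all $n\geq N$.

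The main obstacle is the finitely many levels $n<N$, where nothing is controlled. Here we use that $\mathcal{U}$ is an ultrafilter. For each $s\in 2^N$, the sets $\{i:F(i)|_N=s\}$ partition $\omega$ into finitely many pieces, so exactly one of them, say for $s_0$, lies in $\mathcal{U}$. Let $y=x\cap\{i:F(i)|_N=s_0\}\in\mathcal{U}$. Then $\level_{F[y]}(n)=1\leq h(n)+1$ for $n\leq N$, and $\level_{F[y]}(n)\leq\level_{F[x]}(n)\leq h(n)+1$ for $n>N$. So Lemma \ref{lem:Ff} yields that $\mathcal{U}$ is Laver.

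Note that the converse only used $d=1$; the quantifier over all $d$ in $\mathcal{I}_f$ matters only in the forward direction. In the write-up I would double-check that $\mathcal{H}$ imposes $f(n)\leq n$ and that the constructed $g$ is genuinely non-decreasing.
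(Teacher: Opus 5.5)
Your proof is correct and follows the paper's argument. In the forward direction you apply Lemma \ref{lem:Ff} to one non-decreasing unbounded $g$ with $g+1\leq^\ast\frac1d f$ for every $d>0$ (constructed explicitly), and in the converse you use the $d=1$ bound above some $N$, then intersect with the unique $\mathcal{U}$-large set $\{i: F(i)|_N=s_0\}$, exactly as the paper does; replacing $h$ by $\min(h(n),n)$ to land in $\mathcal{H}$ is a detail the paper leaves implicit, and the appeal to Fact \ref{fct:bound} and a ``convenient target function'' is unnecessary since your argument already handles an arbitrary non-decreasing unbounded $h$.
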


\begin{proof}
If $\mathcal{U}$ is Laver and $f \in \mathcal{H}$, then $\mathcal{U}$ is an $\mathcal{I}_f$-ultrafilter by applying Lemma \ref{lem:Ff} to some non-decreasing, unbounded $g \in \omega^{\omega}$ such that $\forall d >0: g+1 \leq^{\ast} \frac{1}{d}f$. 

The other direction follows directly from Lemma \ref{lem:Ff}, since $f$  dominating $\level_{F[x]}$ above some $m \in \omega$ implies that $f+1$ dominates $\level_{F[x \cap F^{-1}([s])]}$ everywhere, where $s \in 2^m$ is such that $F^{-1}([s])\in \mathcal{U}$.
\end{proof}

 As mentioned in the introduction, in both Lemma \ref{lem:Ff} and in Proposition \ref{prp:baumgartnerstyle} we may restrict our attention to functions $F: \omega \to \mathbb{Q}$ and consider the ideals $\mathcal{I}_f$ not on $2^\omega$, but on $\mathbb{Q}$. We will discuss this point in Section \ref{sec:existence}, where it will simplify some proofs.

Next, we show that Laver ultrafilters live up to what they promise: They are precisely those $\mathcal{U}$ for which $\mathbb{L}_{\mathcal{U}}$ has the Laver property. Recall the following fact.

\begin{fct}[see Judah and Shelah {\cite[Theorem 1.7]{Sh:321}}]
Let $\mathcal{U}$ be any ultrafilter. The forcing notion $\mathbb{L}_{\mathcal{U}}$ has the pure decision property, i.e., for any sentence $\psi$ in the forcing language, there exists a $T' \leq_{\mathbb{L}_{\mathcal{U}}} T$ with $\text{stem}(T')=\text{stem}(T)$ such that either $T' \Vdash_{\mathbb{L}_{\mathcal{U}}} \psi$ or $T' \Vdash_{\mathbb{L}_{\mathcal{U}}} \neg\psi$.	
\end{fct}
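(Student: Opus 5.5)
We prove the pure decision property by the standard rank argument, using that $\mathcal{U}$ is an ultrafilter to amalgamate decisions along successor sets. Fix $T\in\mathbb{L}_{\mathcal{U}}$ with $s:=\text{stem}(T)$ and a sentence $\psi$. We may assume that every node of $T$ comparable with $s$ lies below or above $s$, so $T$ is determined by the part above $s$. For $t\in T$ with $t\supseteq s$, call $t$ \emph{good} if there is $T'\le T|_t$ with $\text{stem}(T')=t$ and either $T'\Vdash\psi$ or $T'\Vdash\neg\psi$. Otherwise call $t$ \emph{bad}. The goal is to show that $s$ is good.

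\textbf{Step 1 (amalgamation).} We show: if $t$ is bad, then $\{n\in\text{succ}_T(t): t^\smallfrown n \text{ is bad}\}\in\mathcal{U}$. Split $\text{succ}_T(t)$ into three pieces:
\begin{itemize}
\item $A_+$, the set of $n$ such that $t^\smallfrown n$ is good via a tree forcing $\psi$;
\item $A_-$, the set of $n$ not in $A_+$ such that $t^\smallfrown n$ is good via a tree forcing $\neg\psi$;
\item $A_b$, the set of $n$ such that $t^\smallfrown n$ is bad.
\end{itemize}
Since $\mathcal{U}$ is an ultrafilter, exactly one piece lies in $\mathcal{U}$. Suppose $A_+\in\mathcal{U}$. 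For each $n\in A_+$ choose a witness $T_n$ and set $T':=\bigcup_{n\in A_+}T_n$. Then $T'\in\mathbb{L}_{\mathcal{U}}$, $\text{stem}(T')=t$ (because $\text{succ}_{T'}(t)=A_+\in\mathcal{U}$ is infinite), and $T'\le T|_t$. Moreover $T'\Vdash\psi$: every $R\le T'$ has an extension whose stem properly extends $t$, say through $t^\smallfrown n$, and that extension is compatible with, indeed below, $T_n$, which forces $\psi$. So the set of conditions forcing $\psi$ is dense below $T'$. This makes $t$ good, a contradiction. The case $A_-\in\mathcal{U}$ is symmetric. Hence $A_b\in\mathcal{U}$.

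\textbf{Step 2 (bad subtree and contradiction).} Suppose toward a contradiction that $s$ is bad. Using Step 1, build by induction on length a tree $T^*\le T$ with stem $s$ in which every node $t\supseteq s$ is bad and $\text{succ}_{T^*}(t)$ equals the $\mathcal{U}$-large set of bad successors of $t$. Then $T^*\in\mathbb{L}_{\mathcal{U}}$. Pick any $q\le T^*$ deciding $\psi$. Its stem $t$ lies in $T^*$ and extends $s$, and $q\le T|_t$ has stem $t$, so $q$ witnesses that $t$ is good. This contradicts the fact that every node of $T^*$ above $s$ is bad. Therefore $s$ is good, which is exactly the required $T'\le T$ with $\text{stem}(T')=\text{stem}(T)$ deciding $\psi$.

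The only delicate point is the amalgamation in Step 1. We must check that $T'$ really has stem $t$ and forces $\psi$, using the density argument above. We must also note that the trichotomy uses that $\mathcal{U}$ is an ultrafilter and not merely a filter. Everything else is routine bookkeeping.
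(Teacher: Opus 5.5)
Your proof is correct: the amalgamation in Step 1 (ultrafilter trichotomy on $\operatorname{succ}_T(t)$, then gluing the witnesses $T_n$) and the tree of bad nodes in Step 2 are exactly the ingredients of the standard proof of pure decision for $\mathbb{L}_{\mathcal{U}}$. The paper does not reprove this but simply cites Judah--Shelah, and the only difference from the usual rank-based presentation is cosmetic: despite calling it a rank argument, you never define a rank, and instead prove directly that bad nodes have $\mathcal{U}$-many bad successors, which collapses the rank induction into a single step.
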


\begin{thm}\label{thm:laverprop}
	$\mathbb{L}_{\mathcal{U}}$ has the Laver property if and only if $\mathcal{U}$ is a Laver ultrafilter.
\end{thm}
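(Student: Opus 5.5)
For the direction ``$\mathbb{L}_{\mathcal{U}}$ has the Laver property $\Rightarrow$ $\mathcal{U}$ is Laver'' I will use the generic real. Let $\ell_G\in\omega^\omega$ be the Laver real, i.e.\ the union of the stems of conditions in the generic filter. Given finite partitions $\langle \mathcal{P}_n : n\in\omega\rangle$, enumerate $\mathcal{P}_n=\{P^n_0,\dots,P^n_{k_n-1}\}$. Let $\undertilde{f}(n)$ be the index $j<k_n$ with $\ell_G(n+1)\in P^n_j$; using coordinate $n+1$ instead of $n$ is just a harmless shift. Then $\undertilde{f}<g$, where $g(n)=k_n$. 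By the Laver property there are $T$ and $S\in\prod_n[k_n]^{\le n+1}$ with $T\Vdash \undertilde f(n)\in S(n)$ for all $n$. Strengthening, we may assume $|\text{stem}(T)|=m$. For $n\ge m$, pick a node $s\in T$ above the stem of length $n+1$. Every $i\in\text{succ}_T(s)$ must lie in $\bigcup_{j\in S(n)}P^n_j$: otherwise $T|_{s^\smallfrown i}$ forces $\undertilde f(n)\notin S(n)$.

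The difficulty is that the successor sets vary with $s$. To handle this I will work only along a single branch. Define $x=\bigcap$ of successor sets along one branch; this is not in $\mathcal{U}$ in general, so I instead use the following trick. Let $x_n:=\bigcup_{j\in S(n)}P^n_j$. The argument above shows $\text{succ}_T(s)\subseteq x_n$ for every $s$ of length $n+1$, so $x_n\in\mathcal{U}$. This gives sets in $\mathcal{U}$ meeting $\le n+1$ pieces, but a single $x$ is needed. Instead I will pick, for each $n<m$, the unique piece of $\mathcal{P}_n$ lying in $\mathcal{U}$. I will then encode all partitions into a single coordinate, so that one successor set suffices. Set $\undertilde f(n):=$ the index of the piece of $\mathcal{P}_n$ containing $\ell_G(|\text{stem}|)$; this is not a ground-model name, so I will use the refined version below.

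The better plan uses the first coordinate. Force below the condition with empty stem and $\text{succ}=\omega$, and let $\undertilde f(n)$ be the index of the piece of $\mathcal{P}_n$ containing $\ell_G(0)$. Obtain $T\le$ this condition and $S$ as above. If $T$ has empty stem, then every $i\in\text{succ}_T(\emptyset)$ lies in $x=\bigcap_n\bigcup_{j\in S(n)}P^n_j$. Hence $x\in\mathcal{U}$, and $x$ meets at most $n+1$ pieces of each $\mathcal{P}_n$, as required. To guarantee an empty stem, I will use pure decision in its standard extension to names for reals, which is proved by fusion in \cite{Sh:321}. From it one gets that the Laver property can be witnessed by some $T'$ with the same stem, after replacing $n+1$ by a slower bound and applying Fact~\ref{fct:bound}. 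This last point is the main obstacle in this direction.

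For the direction ``$\mathcal{U}$ Laver $\Rightarrow$ Laver property'', take $\undertilde f<g$ and a condition $T$ with stem $t$. I will run the standard fusion. Using pure decision, for each node $s$ extending $t$ at height $k$ above the stem, and each $n$ in a finite window, I will shrink so that $T|_{s^\smallfrown i}$ purely decides $\undertilde f(n)$ as some value $v_{s,n}(i)<g(n)$. The map $i\mapsto v_{s,n}(i)$ partitions $\omega$ into at most $g(n)$ pieces. I will apply the Laver ultrafilter property, with a slow bound via Fact~\ref{fct:bound}, to the countably many partitions indexed by pairs $(s,n)$. This yields successor sets in $\mathcal{U}$ on which each $\undertilde f(n)$ takes few values. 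The counting must keep the number of nodes at the relevant levels under control: at fusion level $n$, only finitely many nodes are frozen, say at most $N_n$. I will choose $h$ with $N_n\cdot h(n)\le n+1$, which is possible because the remaining nodes shrink independently and Fact~\ref{fct:bound} allows any unbounded $h$. The resulting fusion $T^*\le T$ with the same stem forces $\undertilde f(n)\in S(n)$ for a ground-model $S$ with $|S(n)|\le n+1$. Balancing the bookkeeping between the fusion levels and $h$ is the delicate point of this direction.
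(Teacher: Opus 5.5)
\paragraph{Gap in the direction ``Laver property $\Rightarrow$ $\mathcal{U}$ is Laver''.} This direction does not go through as proposed. Your final name, $\undertilde{f}(n)=$ the index of the piece of $\mathcal{P}_n$ containing $\ell_G(0)$, is decided outright by every condition with nonempty stem. So the conclusion of the Laver property holds for this name for \emph{every} ultrafilter, and it tells you nothing about $\mathcal{U}$.

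Your proposed remedy is a witness $T'$ with the same stem, obtained from pure decision for names of reals. This is not justified. Pure decision, and the continuous reading it yields, holds for every ultrafilter. Combining it with the Laver property via the usual rank/front argument only gives a pure extension in which each node of some front carries its own slalom. Merging these infinitely many slaloms into one slalom of width $n+1$ requires exactly the thinning that Laverness of $\mathcal{U}$ provides, i.e.\ the other direction. So as stated the step is circular.

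The missing idea is a name that is robust against an arbitrary stem, recording the pieces of \emph{all} coordinates of $\ell_G$ at once. The paper argues contrapositively:
\begin{itemize}
\item It takes partitions such that every $x\in\mathcal{U}$ meets at least $2^n$ pieces of $\mathcal{P}_n$ for infinitely many $n$.
\item It lets $\undertilde{f}(n)$ code, for each $m<n$, the piece of $\mathcal{P}_{n-m-1}$ containing $\ell_G(m)$.
\item For $T$ with stem $s$, coordinate $|s|+i+1$ then reads off the $\mathcal{P}_i$-piece of $\ell_G(|s|)$.
\item Choosing $i$ with $\text{succ}_T(s)$ meeting at least $2^i>|s|+i+2$ pieces yields too many incompatible decisions of $\undertilde{f}(|s|+i+1)$.
\end{itemize}

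Your own idea can also be repaired directly. Let $\undertilde{f}(n)$ code the tuple of indices of the pieces of $\mathcal{P}_n$ containing $\ell_G(0),\dots,\ell_G(n)$; this is bounded by $k_n^{n+1}$. Suppose $T\Vdash\forall n\,\undertilde{f}(n)\in S(n)$ and $k=|\text{stem}(T)|$. For $n\geq k$, each $T|_{\text{stem}(T)^{\smallfrown}i}$ forces $\ell_G(k)=i$. Hence every $i\in\text{succ}_T(\text{stem}(T))$ lies in the union of the at most $n+1$ pieces named by the $k$-th entries of the tuples in $S(n)$. Intersecting $\text{succ}_T(\text{stem}(T))$ with the $\mathcal{U}$-pieces of $\mathcal{P}_0,\dots,\mathcal{P}_{k-1}$ then gives the required $x\in\mathcal{U}$.

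\paragraph{Missing mechanism in the direction ``$\mathcal{U}$ Laver $\Rightarrow$ Laver property''.} Your outline follows the paper's strategy: pure decision, partitions of successor sets by decided values, the Laver property applied node by node, and an enumeration so that only finitely many nodes are active at each $n$. However, the ingredient that makes the counting work is missing. There are infinitely many nodes at each height, so ``few values at each node'' bounds nothing unless the inactive nodes contribute \emph{no} new value of $\undertilde{f}(n)$. Moreover, at a node where $\undertilde{f}(n)$ is not yet decided there is no value to partition by.

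The paper supplies this as follows.
\begin{itemize}
\item For every $s$ with $|s|\leq n$ it defines a $\mathcal{U}$-limit value $a^n_s$ by downward induction from level $n$.
\item It refines partitions $\mathcal{P}^n_s$ according to the values $a^n_{s^{\smallfrown}i}$.
\item It chooses $B_s\in\mathcal{U}$ for the shifted sequence $\langle\mathcal{P}^{\varphi(s)+n}_s:n\in\omega\rangle$, where $\varphi$ enumerates the nodes.
\end{itemize}
For $n\leq\varphi(s)$, $B_s$ lies inside the $\mathcal{U}$-piece, so every successor inherits $a^n_s$. Only the at most $n$ nodes with $\varphi(s)<n$ can add new values, at most $n$ each, which gives a polynomial bound.

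Finally, your requirement $N_n\cdot h(n)\leq n+1$ with $h$ unbounded forces $N_n=o(n)$, which you would have to arrange explicitly. It is simpler to accept a polynomial bound and use that $n+1$ may be replaced by any unbounded function in the Laver property.
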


\begin{proof}
	Assume first that $\mathcal{U}$ is Laver. Let $\undertilde{f}$ be a $\mathbb{L}_{\mathcal{U}}$-name for an element of $\omega^{\omega}$, let $g \in \omega^{\omega}$ be such that $\mathbb{L}_{\mathcal{U}}\Vdash \forall n \in \omega: \undertilde{f}(n)< g(n)$, and let $T \in \mathbb{L}_{\mathcal{U}}$. We will show that there exists some $T' \leq_{\mathbb{L}_{\mathcal{U}}}T$ and a function $c: \omega \to [\omega]^{<\omega}$ such that $\forall n \in \omega: |c(n)|\leq n^{3} \land T' \Vdash_{\mathbb{L}_{\mathcal{U}}}\undertilde{f}(n)\in c(n)$.

Assume without loss of generality that $\text{stem}(T)=\emptyset$. By the pure decision property, we may thin out $T$ such that
\[\forall s \in T: T|_s \text{ decides } \undertilde{f}(n) \text{ for each } n \leq |s|.\]

By induction, for each $s\in T$, we define:

\begin{enumerate}[label=(\roman*)]
\item A sequence $\langle \mathcal{P}_s^{n}: n \in \omega\rangle$ of partitions of $\omega$ into finitely many pieces,
\item A decreasing sequence $\langle A_s^{n}: n \in \omega\rangle$ of elements of $\mathcal{U}$, such that each $A_s^n$ is one of the elements of $\mathcal{P}_s^n$.
\item If $|s| \leq n$, then some $a_s^n \in g(n)$.
\end{enumerate}

For each $s \in T$, let $A_s^0:=\text{succ}_{T}(s)$ and $\mathcal{P}_s^0$ the partition of $\omega$ into $A_s^0$ and its complement. Furthermore, let $a_{\emptyset}^0 \in g(0)$ such that $T \Vdash_{\mathbb{L}_{\mathcal{U}}}\undertilde{f}(n)=a_{\emptyset}^0$. In the $(n+1)$'th step, define

\begin{enumerate}[label=(\roman*)]
\item For each $s\in T$ with $|s|\geq n+1: A_s^{n+1}:=A_s^{n}$ and $\mathcal{P}_s^{n+1}:=\mathcal{P}_s^{n}$.
\item For each $s \in T$ with $|s|=n+1$, let $a_s^{n+1}\in g(n+1)$ be such that $T|_s \Vdash_{\mathbb{L}_{\mathcal{U}}}\undertilde{f}(n+1)=a_s^{n+1}$.
\item Now, for each $t \in T$ with $|t|=n$, let
\[\mathcal{Q}_t^{n+1}:=\{\{m \in \text{succ}_T(t): a_{t^{\smallfrown}m}^{n+1}=a\}: a \in g(n+1)\} \cup \{\omega \setminus \text{succ}_T(t)\}.\]
and let $\mathcal{P}_t^{n+1}$ be a refinement of $\mathcal{Q}_t^{n+1}$ and $\mathcal{P}_t^{n}$. Let $A_t^{n+1}\in \mathcal{U}$ be the unique element of $\mathcal{P}_t^{n+1} $ that lies in $\mathcal{U}$, and let $a \in g(n+1)$ be such that $\forall m \in A_t^{n+1}: a_{t^{\smallfrown}m}^{n+1}=a$. Set $a_t^{n+1}:=a$ and continue this construction downward.
\end{enumerate}

Let $\varphi: \seq^{<\omega}(\omega) \to \omega$ be a bijection. For each $s\in T$, let $B_s \in \mathcal{U}$ be given by Definition \ref{def:laver} for the sequence $\langle \mathcal{P}_s^{\varphi(s)+n}: n \in \omega\rangle$.

Let $T' \leq_{\mathbb{L}_{\mathcal{U}}}T$ be such that $\forall s \in T': \text{succ}_{T'}(s)=B_s$. Furthermore, define $c(n):=\{a_s^{n}: s \in T' \land|s|=n\}$ for each $n \in \omega$. It is clear that $T' \Vdash_{\mathbb{L}_{\mathcal{U}}}\forall n \in \omega: \undertilde{f}(n)\in c(n)$.

\begin{claim}
$\forall n \in \omega: |c(n)| \leq n^{3}$.
\end{claim}

\begin{proof}
For each $k \leq n$, define $c_k(n):=\{a_s^{n}: s \in T' \land |s|=k\}$, such that $c(n)=c_n(n)$. Let $s \in T'$ with $|s|=k$ and note the following:

If $\varphi(s)\geq n$, then $\mathcal{P}_s^{n}$ is coarser then $\mathcal{P}_s^{\varphi(n)+0}$ and therefore $B_s \subseteq A_s^n$. Hence, $a_{s^{\smallfrown}m}^n = a_s^{n}$ for each $m \in B_s=\text{succ}_{T'}(s)$.

If $\varphi(s)<n$, we know that $B_s$ intersects at most $n+1$ elements of $\mathcal{P}_s^n$, one of which is $A_s^n$. Therefore, among all $m \in \text{succ}_{T'}(s)$, $a_{s^{\smallfrown}m}^n$ attains the value $a_s^n$ and at most $n$ other values. This case happens at most $n$ times, since there are at most $n$ sequences $s$ with $\varphi(s)<n$.

It follows that $c_{k+1}(n)$ is the union of $c_k(n)$ with some set of size at most $n^2$. Hence, $|c_n(n)|\leq n^3$.
\end{proof}

For the reverse direction, assume that $\mathcal{U}$ is not Laver. By Fact \ref{fct:bound}, there exists a sequence $\langle \mathcal{P}_n: n \in \omega \rangle$ of finite partitions of $\omega$ such that
\[\forall x \in \mathcal{U}\: \forall m \in \omega \: \exists n \geq m: ``x \text{ meets at least }|2^n| \text{ elements of }\mathcal{P}_n."\]
Write $\mathcal{P}_n=\{P_n^{i}: i \in k_n\}$. We denote by $\mathbbm{1}$ the maximal element of $\mathbb{L}_{\mathcal{U}}$. Let $\varphi: \seq ^{<\omega}(\omega)\to \omega$ again be a bijection and define $\undertilde{f}$ to be an $\mathbb{L}_{\mathcal{U}}$-name for an element of $\omega^{\omega}$ such that
\[\forall s \in \mathbbm{1}: \mathbbm{1}|_s \Vdash \forall n \leq |s|: \undertilde{f}(n)=\varphi(\sigma(s|_{n})),\]
where $\sigma(t)\in \omega^{|t|}$ is given by
\[\sigma(t)(m)=l \text{ for the unique } l \in k_{|t|-m-1} \text{ with }t(m) \in P_{|t|-m-1}^l.\]
Note that for each $n \in \omega$ and each $t \in \mathbbm{1}$ with $|t|=n$, $\sigma(t)$ is an element of the finite set $\prod_{m \in n}k_{n-m-1}$, hence $\undertilde{f}$ is bounded.

To show that $\undertilde{f}$ constitutes a counterexample to the Laver property for $\mathbb{L}_{\mathcal{U}}$, assume by contradiction that there exists a function $c: \omega \to [\omega]^{<\omega}$ with $\forall n \in \omega: |c(n)|\leq n+1$ and some $T \in \mathbb{L}_{\mathcal{U}}$ such that
\[T \Vdash \forall n \in \omega: \undertilde{f}(n)\in c(n).\]
Let $s:=\text{stem}(T)$ and let $i \in \omega$ be large enough so that
\begin{itemize}
\item[(i)] $2^i - i > |s|+2$
\item[(ii)] $\text{succ}_T(s)$ intersects at least $2^i$ elements of $\mathcal{P}_i$.
\end{itemize}
For each $j \in 2^i$, choose some $a_j \in \text{succ}_T(s)$ such that distinct $a_j$ belong to distinct elements of $\mathcal{P}_i$. Furthermore, for each $j \in 2^i$, let $t_j \in T|_s$ be an extension of $s^{\smallfrown}a_j$ with $|t_j|=|s|+i+1$.

Now, each $T|_{t_j}$ extends $T|_s$ and
\[T|_{t_j} \Vdash \undertilde{f}(|s|+i+1)=\varphi(\sigma(t_j)),\]
where $\sigma(t_j)(|s|)=l_j$ for the unique $l_j \in k_{i}$ such that $t_j(|s|)=a_j\in P_i^{l_j}$. Since these $l_j$ are distinct for distinct $j$, the $T|_{t_j}$ decide $\undertilde{f}(|s|+i+1)$ in $2^i$ different ways. Hence, since $2^i > |s|+i+2\geq |c(|s|+i+1)|$, there must be some $j \in 2^i$ such that $T|_{t_j}\Vdash \undertilde{f}(|s|+i+1)\notin c(|s|+i+1)$.
\end{proof}

\section{Properties of Laver Ultrafilters}\label{section3}

In this section, we establish several structural properties of Laver ultrafilters. We also relate them to other well-studied classes of ultrafilters and, assuming $\textsf{MA}(\sigma\text{-linked})$, construct a non-scattered Laver ultrafilter.

Recall that an ultrafilter $\mathcal{V}$ is \emph{Rudin-Keisler below} an ultrafilter $\mathcal{U}$, denoted $\mathcal{V}\leq_{\text{RK}}\mathcal{}U$, if there exists some $f\in \omega^\omega$ such that $\forall x \in \mathcal{V}: f^{-1}(x)\in \mathcal{U}$. Note the following fact.

\begin{fct}\label{fct:starhereditary}
If $\mathcal{U}$ is a Laver ultrafilter and $\mathcal{V}\leq_{\text{RK}}\mathcal{U}$, then $\mathcal{V}$ is a Laver ultrafilter.
\end{fct}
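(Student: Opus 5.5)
The plan is to pull partitions back along the Rudin--Keisler witness. Fix $h\in\omega^\omega$ with $\forall y\in\mathcal{V}: h^{-1}(y)\in\mathcal{U}$, and let $\langle \mathcal{P}_n: n\in\omega\rangle$ be an arbitrary sequence of finite partitions of $\omega$; we must find $y\in\mathcal{V}$ meeting at most $n+1$ pieces of each $\mathcal{P}_n$.

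For each $n$, define the pulled-back partition
\[\mathcal{Q}_n:=\{h^{-1}(P): P\in\mathcal{P}_n,\ h^{-1}(P)\neq\emptyset\}.\]
The map $h^{-1}$ sends a partition to a partition once empty preimages are discarded, so each $\mathcal{Q}_n$ is a finite partition of $\omega$. Since $\mathcal{U}$ is Laver, Definition \ref{def:laver} yields some $x\in\mathcal{U}$ that meets at most $n+1$ elements of each $\mathcal{Q}_n$.

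Now set $y:=h[x]$. Because $x\subseteq h^{-1}(h[x])$ and $x\in\mathcal{U}$, we get $h^{-1}(y)\in\mathcal{U}$. If $y\notin\mathcal{V}$, then $\omega\setminus y\in\mathcal{V}$, so $h^{-1}(\omega\setminus y)\in\mathcal{U}$; this set is disjoint from $h^{-1}(y)$, a contradiction. Hence $y\in\mathcal{V}$.

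It remains to count pieces. If $y$ meets $P\in\mathcal{P}_n$, pick $m\in x$ with $h(m)\in P$; then $m\in x\cap h^{-1}(P)$. Distinct pieces $P$ have disjoint, nonempty preimages, so they correspond to distinct elements of $\mathcal{Q}_n$ met by $x$. Therefore $y$ meets at most $n+1$ elements of $\mathcal{P}_n$, as required.

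There is no real obstacle here. The only points needing a moment's care are that $\mathcal{Q}_n$ is a genuine partition (empty preimages are discarded) and that the image $h[x]$ lies in $\mathcal{V}$, both handled above. Non-principality of $\mathcal{V}$ is part of the hypothesis, since all ultrafilters in the paper are non-principal.
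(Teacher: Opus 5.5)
Your proof is correct. The paper states this Fact without proof, and your argument (pull each $\mathcal{P}_n$ back along the Rudin--Keisler map, apply the Laver property of $\mathcal{U}$, then push forward to $h[x]\in\mathcal{V}$, noting that $P\mapsto h^{-1}(P)$ is injective on the pieces met) is exactly the routine verification it leaves implicit.
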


\begin{prp}\label{prp:rapid}
If $\mathcal{U}$ is a Laver ultrafilter, then $\mathcal{U}$ is hereditarily rapid, i.e., if $\mathcal{V}$ is an ultrafilter such that $\mathcal{V}\leq_{\text{RK}}\mathcal{U}$, then $\mathcal{V}$ is rapid.
\end{prp}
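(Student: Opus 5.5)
By Fact \ref{fct:starhereditary}, every $\mathcal{V}\leq_{\text{RK}}\mathcal{U}$ is again a Laver ultrafilter. So it suffices to show that every Laver ultrafilter is rapid; hereditary rapidity then follows immediately. The plan is a direct translation: the rapidity condition for a strictly increasing $f$ becomes a statement about meeting few cells of a suitable sequence of finite partitions.

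So let $\mathcal{U}$ be Laver and let $f\in\omega^\omega$ be strictly increasing. For each $n$, let $\mathcal{P}_n$ be the partition of $\omega$ into the singletons $\{k\}$ for $k<f(n)$, together with the cofinite set $\omega\setminus f(n)$. This is a finite partition. By Definition \ref{def:laver}, with the bound $n+1$, there is $x\in\mathcal{U}$ meeting at most $n+1$ elements of each $\mathcal{P}_n$.

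Since $\mathcal{U}$ is non-principal, $x$ is infinite. In particular $x$ meets $\omega\setminus f(n)$ for every $n$. Hence $x$ meets at most $n$ of the singletons below $f(n)$, that is, $|x\cap f(n)|\leq n$ for all $n\in\omega$, which is exactly rapidity.

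There is essentially no obstacle here. The only point needing care is the off-by-one: the definition allows $n+1$ cells, while rapidity demands at most $n$ points below $f(n)$. This is absorbed by the observation that one of the $n+1$ cells met is always the infinite tail cell. Should that bookkeeping be off, Fact \ref{fct:bound} lets us replace $n+1$ by any smaller unbounded bound such as $h(n)+1$ with $h(n)\leq n$. Alternatively, we could pass to the partitions for $f(n+1)$ and shrink $x$ by a finite set, which does not affect membership in $\mathcal{U}$.
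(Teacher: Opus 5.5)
Your proof is correct and matches the paper's argument: the same partitions $\mathcal{P}_n=\{\{i\}:i<f(n)\}\cup\{\omega\setminus f(n)\}$, combined with the RK-downward closure from Fact \ref{fct:starhereditary}. You also make explicit the off-by-one step the paper leaves implicit (the infinite set $x$ must meet the tail cell, so it meets at most $n$ singletons), which makes the fallback remarks in your last paragraph unnecessary.
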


\begin{proof}
For a strictly increasing $f \in \omega^\omega$, consider the sequence $\langle \mathcal{P}_n: n \in \omega\rangle$ of finite partitions of $\omega$, where
\[\mathcal{P}_n=\{\{i\}: i < f(n)\} \cup \{\omega \setminus f(n)\}.\]
Since the property of being a Laver ultrafilter is downward closed in the Rudin-Keisler ordering by Fact \ref{fct:starhereditary}, the claim follows.
\end{proof}

Note that Brendle and Fla\v{s}kov\'a \cite[Theorem 2.8]{brefla17} have constructed a hereditarily rapid ultrafilter that is not nowhere dense, assuming $\textsf{MA}(\text{countable})$.\footnote{In fact, the ultrafilter in question is a hereditary $Q$-point.} Therefore, as Laver ultrafilters are nowhere dense, the implication in Proposition \ref{prp:rapid} does not reverse. If we additionally assume that our hereditarily rapid ultrafilter is a $P$-point, then we do obtain a Laver ultrafilter.\footnote{Note that a rapid $P$-point $\mathcal{U}$ is automatically hereditarily rapid, since every Rudin-Keisler reduction $f\in \omega^\omega$ is finite-to-one on some element of $\mathcal{U}$.}

\begin{prp}\label{prp:semiselective}
Every rapid $P$-point is a Laver ultrafilter.
\end{prp}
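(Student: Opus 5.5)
Let $\mathcal{U}$ be a rapid $P$-point and let $\langle \mathcal{P}_n: n\in\omega\rangle$ be a sequence of finite partitions of $\omega$. By Fact \ref{fct:bound}, it suffices to find $x\in\mathcal{U}$ meeting at most $h(n)+1$ pieces of each $\mathcal{P}_n$ for some non-decreasing unbounded $h$. Without loss of generality each $\mathcal{P}_{n+1}$ refines $\mathcal{P}_n$, which only makes the task harder. Let $A_n\in\mathcal{P}_n$ be the unique piece lying in $\mathcal{U}$. Then $\langle A_n\rangle$ is decreasing.

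The first step uses the $P$-point property. We get $x_0\in\mathcal{U}$ with $x_0\subseteq^* A_n$ for all $n$. Let $g(n)$ be least such that $x_0\setminus g(n)\subseteq A_n$. We may take $g$ strictly increasing: replacing $g$ by a larger strictly increasing function keeps the inclusion. Hence every element of $x_0\cap[g(n),\infty)$ lies in $A_n$.

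The second step uses rapidity. Rapidity is given for the bound $n$, but it also holds for any unbounded non-decreasing bound. The reason is that $|x\cap f(n)|\le n$ for all strictly increasing $f$ implies the analogous statement for $f'(n)=f(k(n))$ with suitable $k$. So we choose $x_1\in\mathcal{U}$ with $|x_1\cap g(n)|\le n$ for all $n$, and set $x=x_0\cap x_1\in\mathcal{U}$.

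The count is then direct. Split $x$ into $x\cap g(n)$, which has at most $n$ elements and so meets at most $n$ pieces of $\mathcal{P}_n$, and $x\setminus g(n)$, which is contained in $A_n$. Thus $x$ meets at most $n+1$ pieces of $\mathcal{P}_n$, exactly as Definition \ref{def:laver} requires. This route does not even need Fact \ref{fct:bound}.

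The only point to verify carefully is that $g$ is well defined and can be chosen strictly increasing, so that the definition of rapidity applies. Well-definedness holds because $x_0\setminus A_n$ is finite for each $n$. If $g$ happens to be bounded, any strictly increasing majorant works. I do not foresee a real obstacle; the key is combining the pseudo-intersection with rapidity applied to its "modulus" $g$.
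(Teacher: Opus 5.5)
Your proof is correct and is essentially the paper's argument: take a pseudo-intersection in $\mathcal{U}$ of the $\mathcal{U}$-pieces $A_n$ using the $P$-point property, then use rapidity to shrink it so that $|x\setminus A_n|\le n$. The paper asserts this last step in one line, and you justify it explicitly by applying rapidity to the modulus $g$ and intersecting; the refinement assumption and the remark about general bounds are harmless but, as you note, unnecessary.
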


\begin{proof}
Let $\mathcal{U}$ be a rapid $P$-point and consider any sequence $\langle \mathcal{P}_n: n \in \omega\rangle$ of finite partitions of $\omega$. Denote by $u(n)$ the unique element in $\mathcal{P}_n \cap \mathcal{U}$. Since $\mathcal{U}$ is a $P$-point, the sequence $\langle u(n): n \in \omega \rangle$ has a pseudo-intersection $x$ in $\mathcal{U}$, and since $\mathcal{U}$ is rapid, we may assume that $\forall n \in \omega: |x \setminus u(n)|\leq n$. Hence, apart from $u(n)$, $x$ meets at most $n$ additional elements of $\mathcal{P}_n$.
\end{proof}

Given ultrafilters $\mathcal{U}$ and $\mathcal{V}_i$, $i\in \omega$, recall that the ultrafilter $\mathcal{U}-\sum_{i\in \omega}\mathcal{V}_i$ on $\omega \times \omega$ consists of those $x\subseteq \omega \times \omega$ for which
\[\{i \in \omega: \{j\in \omega: \langle i,j\rangle\in x\}\in \mathcal{V}_i\}\in \mathcal{U}.\]
Alternatively, $\mathcal{U}-\sum_{i\in \omega}\mathcal{V}_i$ is the $\mathcal{U}$-limit of the sequence $\langle \mathcal{V}_i: i \in \omega\rangle$ in the topological space $\beta \omega \setminus \omega$.

\begin{prp}
If $\mathcal{U}$ and $\mathcal{V}_i$ for  $i \in \omega$ are Laver ultrafilters, then ${\mathcal{U}}-\sum_{i \in \omega}\mathcal{V}_i$ is a Laver ultrafilter as well.
\end{prp}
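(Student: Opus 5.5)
We work directly with Definition \ref{def:laver}, transported to $\omega\times\omega$ via a bijection, and use the flexibility of the bound granted by Fact \ref{fct:bound}. Let $\mathcal{W}:=\mathcal{U}-\sum_{i\in\omega}\mathcal{V}_i$ and fix a sequence $\langle \mathcal{P}_n : n\in\omega\rangle$ of finite partitions of $\omega\times\omega$. By Fact \ref{fct:bound}, it suffices to find $z\in\mathcal{W}$ meeting at most $(n+1)^2$ pieces of each $\mathcal{P}_n$, since $h(n)=(n+1)^2-1$ is non-decreasing and unbounded. (If a cleaner count is wanted, we may instead aim for $h(n)=n^2+2n$.)

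\textbf{Step 1 (columns).} For each $i$ and $n$, let $\mathcal{P}_n^i:=\{\{j : \langle i,j\rangle\in P\}: P\in\mathcal{P}_n\}$, which is a finite partition of $\omega$ (empty pieces dropped). Since $\mathcal{V}_i$ is Laver, there is $y_i\in\mathcal{V}_i$ meeting at most $n+1$ pieces of $\mathcal{P}^i_n$ for every $n$. Thus the fibre $\{i\}\times y_i$ meets at most $n+1$ pieces of $\mathcal{P}_n$. This step leaves the pieces met by different columns uncontrolled, so we must restrict which columns are used.

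\textbf{Step 2 (rows via $\mathcal{U}$).} For each $i$ and $n$, let $F_n(i)$ be the set of pieces of $\mathcal{P}_n$ that $\{i\}\times y_i$ meets. Only finitely many such sets are possible, since there are finitely many subsets of the finite set $\mathcal{P}_n$. Hence $\mathcal{Q}_n:=\{\{i: F_n(i)=S\}: S\subseteq \mathcal{P}_n\}$ is a finite partition of $\omega$. Applying that $\mathcal{U}$ is Laver to $\langle\mathcal{Q}_n\rangle$ gives $x\in\mathcal{U}$ such that, for each $n$, the set $\{F_n(i): i\in x\}$ has at most $n+1$ members, each of which is a set of at most $n+1$ pieces.

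\textbf{Step 3 (conclusion).} Put $z:=\bigcup_{i\in x}\{i\}\times y_i$. Then $z\in\mathcal{W}$, because $x\in\mathcal{U}$ and $y_i\in\mathcal{V}_i$. The set of pieces of $\mathcal{P}_n$ met by $z$ is $\bigcup_{i\in x}F_n(i)$, which has size at most $(n+1)^2$. Fact \ref{fct:bound} then converts this bound into the bound $n+1$.

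The main point to get right is Step 2. We must use the Laver property of $\mathcal{U}$ on the partition induced by the entire \emph{set} of pieces hit by each column, not merely on one chosen piece per column. This is what makes the union over infinitely many columns stay bounded. The remaining issue is confirming that Fact \ref{fct:bound} applies to partitions of $\omega\times\omega$, which is immediate via a bijection with $\omega$.
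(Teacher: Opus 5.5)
Your argument is correct. It has the same skeleton as the paper's proof: choose $y_i\in\mathcal{V}_i$ column by column, apply Laverness of $\mathcal{U}$ to an induced partition of the column indices, take $\bigcup_{i\in x}\{i\}\times y_i$, get the bound $(n+1)^2$, and finish with Fact~\ref{fct:bound}. Where you differ is in the choice of the induced partition.

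The paper first assumes $\mathcal{P}_{n+1}$ refines $\mathcal{P}_n$. Its $\mathcal{Q}_n$ records only the unique piece $Q_n(i)\in\mathcal{P}_n$ whose $i$-th section lies in $\mathcal{V}_i$. Because this ignores the other pieces a column may hit, the paper chooses $y_i$ against the shifted sequence $\langle \mathcal{P}^{(i)}_{n+i}: n\in\omega\rangle$. Via refinement, this forces $\{i\}\times y_i\subseteq Q_n(i)$ for all $i\ge n$. It then bounds the finitely many columns $i<n$ crudely, arriving at $n(n+1)+(n+1)$.

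You instead define $\mathcal{Q}_n$ after fixing the $y_i$, using the full trace $F_n(i)$. This partition may have up to $2^{|\mathcal{P}_n|}$ pieces, which Definition~\ref{def:laver} allows. It makes the refinement assumption, the index shift, and the split into columns below and above $n$ all unnecessary.

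Your closing remark that one \emph{must} use the whole set of pieces hit is too strong. The paper shows that one piece per column suffices once almost all columns are shrunk into their $\mathcal{V}_i$-large piece. Your bookkeeping is shorter. The paper's version has the mild feature that $\mathcal{Q}_n$, and hence $x$, depends only on the partitions and the ultrafilters $\mathcal{V}_i$, not on the particular choice of the $y_i$.
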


\begin{proof}
 Let $\langle \mathcal{P}_n: n \in \omega\rangle$ be a sequence of finite partitions of $\omega \times \omega$. By Fact \ref{fct:bound}, it suffices to find some $X \in {\mathcal{U}}-\sum_{i\in \omega}\mathcal{V}_i$ such that for each $n \in \omega$, $X$ intersects at most $(n+1)^{2}$ elements of $\mathcal{P}_n$. We may assume that $\mathcal{P}_{n+1}$ refines $\mathcal{P}_n$.
 
If $A \subseteq \omega \times \omega$ and $i \in \omega$, write $A^{(i)}:=\{j \in \omega: \langle i,j\rangle \in A\}$. For $i,n \in \omega$, let $\mathcal{P}_n^{(i)}$ be the partition $\{P^{(i)}: P \in \mathcal{P}_n\}$. Furthermore, let $Q_n(i)$ be the unique element of $\mathcal{P}_n$ with $Q_n(i)^{(i)}\in \mathcal{V}_i$. Define the partition
 \[\mathcal{Q}_n:=\{\{i \in \omega: Q_n(i)=P\}: P \in \mathcal{P}_n\}.\]
 For each $i \in \omega$, let $y_i\in \mathcal{V}_i$ be such that for each $n\in \omega$, $y_i$ intersects at most $n+1$ elements of $\mathcal{P}_{n+i}^{(i)}$. Let $x \in \mathcal{U}$ be such that $x$ intersects at most $n+1$ elements of $\mathcal{Q}_n$. Consider the set
 \[X:=\{\langle i,j\rangle: i \in x \land j \in y_i\}\in \mathcal{U}-\sum_{i \in \omega}\mathcal{V}_i.\]
 Fix $n \in \omega$ and consider the decomposition $X=X_{<n} \cup X_{\geq n}$, where $X_{<n}:=X\cap (n \times \omega)$ and $X_{\geq n}:= X \cap ((\omega \setminus n)\times \omega)$. Note that $X_{<n}$ intersects at most $n \cdot (n+1)$ elements of $\mathcal{P}_n$. Furthermore, for each $i \geq n$, $X_{\geq n}^{(i)}=y_i$ intersects at most one element of $\mathcal{P}_n^{(i)}$, hence $X_{\geq n}^{(i)} \subseteq Q_n(i)^{(i)}$. Therefore, since $x$ intersects at most $n+1$ elements of $\mathcal{Q}_n$, $X_{\geq n}$ intersects at most $n+1$ elements of $\mathcal{P}_n$. It follows that $X$ intersects at most
 \[n \cdot (n+1)+(n+1)=(n+1)^{2}\]
 elements of $\mathcal{P}_n$.
 \end{proof}

\begin{cor}
If there exists a Laver ultrafilter, there exists a Laver ultrafilter that is not a $P$-point.
\end{cor}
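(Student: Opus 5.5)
Given a Laver ultrafilter $\mathcal{U}$, we take $\mathcal{W}:=\mathcal{U}-\sum_{i\in\omega}\mathcal{U}$, i.e.\ all $\mathcal{V}_i=\mathcal{U}$. By the preceding proposition, $\mathcal{W}$ is a Laver ultrafilter on $\omega\times\omega$. Transferring it to $\omega$ along a bijection $\omega\times\omega\to\omega$ preserves being Laver, since the definition is invariant under bijections. It then remains to show that $\mathcal{W}$ is not a $P$-point. This is the standard fact that a sum of nonprincipal ultrafilters is never a $P$-point, and we verify it directly.

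For $n\in\omega$ consider the sets $x_n:=(\omega\setminus n)\times\omega$. Each $x_n\in\mathcal{W}$, because $\{i: x_n^{(i)}\in\mathcal{U}\}=\omega\setminus n\in\mathcal{U}$, using nonprincipality of $\mathcal{U}$. Suppose $X\in\mathcal{W}$ is a pseudo-intersection, so $X\subseteq^* x_n$ for all $n$. Then for each $n$ the set $X\cap(n\times\omega)$ is finite, so every column $X^{(i)}$ is finite. Since $\mathcal{W}\ni X$ means $\{i: X^{(i)}\in\mathcal{U}\}\in\mathcal{U}$, some column $X^{(i)}$ belongs to the nonprincipal ultrafilter $\mathcal{U}$, contradicting its finiteness. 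Hence $\langle x_n:n\in\omega\rangle$ witnesses that $\mathcal{W}$ is not a $P$-point.

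There is no real obstacle here. The only point to check is that the Laver property transfers through the bijection, and this is immediate because partitions and the sets meeting them correspond exactly under a bijection.
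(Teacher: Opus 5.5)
Your proposal is correct and follows the paper's proof. The paper likewise takes $\mathcal{U}-\sum_{n\in\omega}\mathcal{U}=\mathcal{U}\times\mathcal{U}$, gets Laverness from the preceding proposition, and simply asserts that such a product is never a $P$-point. Your explicit witness sequence $(\omega\setminus n)\times\omega$ and your remark about transferring along a bijection $\omega\times\omega\to\omega$ just fill in details the paper leaves implicit.
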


\begin{proof}
Given a Laver ultrafilter $\mathcal{U}$, the ultrafilter $\mathcal{U}-\sum_{n\in \omega}\mathcal{U}=\mathcal{U}\times \mathcal{U}$ is a Laver ultrafilter by the previous proposition. It is easy to see that the product of two ultrafilters is never a $P$-point.
\end{proof}

While Laver ultrafilters must not be $P$-points, they nonetheless share some $P$-point-like properties. More concretely, as mentioned in the introduction, they are measure zero ultrafilters. We need the following definition due to Yorioka \cite{yoriokaideals}.

\begin{defi}
For $\sigma \in (2^{<\omega})^{\omega}$,	 define $[\sigma]_{\infty}:=\bigcap_{n \in \omega}\bigcup_{m \geq n}[\sigma(m)]$ and $\text{ht}_{\sigma}(i):=|\sigma(i)|$, for each $i \in \omega$. For $f,g \in \omega^\omega$, define $f \ll g :\iff \forall k \in \omega: f \circ \text{pw}_k \leq^{\ast}g$, where $\text{pw}_k: \omega \to \omega, i \mapsto i^k$. Finally, for an increasing $f \in \omega^\omega$, define the \textit{Yorioka ideal}
\[\mathcal{Y}_f:=\{X \subseteq 2^{\omega}: \exists \sigma \in (2^{<\omega})^{\omega}: X \subseteq [\sigma]_{\infty} \land f \ll \text{ht}_{\sigma}\}.\]
\end{defi}

Yorioka \cite{yoriokaideals} has shown that for each strictly increasing $f \in \omega^\omega$, $\mathcal{Y}_f$ is a $\sigma$-ideal, $\mathcal{Y}_f$ consists of measure zero sets and $\bigcap_{f\in \omega^\omega}\mathcal{Y}_f$ is precisely the ideal $\mathcal{SN}$ of strong measure zero subsets of $2^\omega$.

\begin{lem}\label{lemyor}
For each strictly increasing $f \in \omega^\omega$, there exists $g\in\mathcal{H}$ such that $\mathcal{I}_g \subseteq \mathcal{Y}_f$.
\end{lem}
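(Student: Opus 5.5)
The plan is to build $g$ as a very slowly growing step function adapted to $f$, and then to cover any $A\in\mathcal{I}_g$ by listing, block by block, the nodes of $A$ at a sparse sequence of levels. Only the case $d=1$ of the definition of $\mathcal{I}_g$ will be needed, i.e. $\level_A\leq^{\ast} g$.

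\emph{Construction of $g$.} Choose a strictly increasing sequence $\langle n_j: j\in\omega\rangle$ with $n_0=0$ and
\[
n_j\geq \max\{\,j,\; f\bigl((j+1)^{2j}\bigr)\,\}\quad\text{for } j\geq 1 .
\]
Put $g(n):=j$ for $n\in[n_j,n_{j+1})$. Then $g$ is non-decreasing and unbounded. Moreover $g(n)\leq n$, because $n\geq n_j\geq j$. Hence $g\in\mathcal{H}$.

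\emph{Covering a set $A\in\mathcal{I}_g$.} Fix such an $A$ and pick $j_0$ with $\level_A(n)\leq g(n)$ for all $n\geq n_{j_0}$. In particular $\level_A(n_j)\leq j$ for every $j\geq j_0$.

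Split $\omega$ into consecutive blocks $I_j$ with $|I_j|=j+1$. Then $\max I_j<(j+1)^2$ (for $j\geq 1$).

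Define $\sigma$ on each block as follows:
\begin{itemize}
\item for $j\geq j_0$, let $\sigma{\restriction}I_j$ enumerate all strings $x|_{n_j}$ with $x\in A$; there are at most $j<|I_j|$ of them, so pad by repeating one of them;
\item for $j<j_0$, let $\sigma{\restriction}I_j$ consist of arbitrary strings of length $n_j$.
\end{itemize}
In either case $\text{ht}_\sigma(i)=n_j$ for $i\in I_j$.

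\emph{Verification.} Every $x\in A$ satisfies $x\in[\sigma(i)]$ for some $i\in I_j$, for each $j\geq j_0$. So $x$ lies in infinitely many $[\sigma(i)]$, i.e. $A\subseteq[\sigma]_\infty$.

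For $f\ll\text{ht}_\sigma$, fix $k\geq1$. Take $i\in I_j$ with $j\geq\max\{k,1\}$. Then $i<(j+1)^2$, so, since $f$ is increasing,
\[
f(i^k)\leq f\bigl((j+1)^{2k}\bigr)\leq f\bigl((j+1)^{2j}\bigr)\leq n_j=\text{ht}_\sigma(i).
\]
Thus $f\circ\text{pw}_k\leq^{\ast}\text{ht}_\sigma$ for every $k$, and therefore $A\in\mathcal{Y}_f$.

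\emph{Main obstacle.} The only delicate point is the interaction of the three growth rates: block sizes (bounded by the level bound $g$), indices (the position $i$ fed into $f\circ\text{pw}_k$), and heights (the levels $n_j$). It is resolved by making the block sizes depend only on $j$, via padding, so that indices are polynomial in $j$. Then the single choice $n_j\geq f((j+1)^{2j})$ beats every power $k$ simultaneously. Since the finitely many initial blocks are filled arbitrarily, the construction needs no uniformity in $A$, and $g$ depends only on $f$, as the statement requires.
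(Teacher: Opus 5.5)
Your proof is correct and follows essentially the same strategy as the paper: $g$ is a step function that increases by one at sparse heights determined by $f$ composed with a polynomial, and $\sigma$ lists the restrictions of $A$ to those heights, so that the nodes at the $j$-th height sit at indices of order $j^2$. The differences are cosmetic. You pad each block to the fixed size $j+1$ and use the explicit diagonal bound $n_j\geq f\bigl((j+1)^{2j}\bigr)$. The paper instead takes an increasing $\leq^{\ast}$-bound $f^{\ast}$ of $\{f\circ\text{pw}_k: k\in\omega\}$, sets $h_i=f^{\ast}(i(i+3))$, and counts the cumulative number of listed nodes (up to an additive constant) to conclude $\text{ht}_{\sigma}\geq^{\ast}f^{\ast}$.
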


\begin{proof}
	Let $f^{\ast}\in \omega^\omega$ be increasing and a $\leq^{\ast}$-upper bound of the family $\{f \circ \text{pw}_k: k \in \omega\}$. Define $h_i:=f^{\ast}(i(i+3))$ for each $i \in \omega$. Let $g \in \omega^\omega$ be given by 
	\[g|_{[h_i, h_{i+1})}:\equiv i \;\text{ and }\; g|_{[0,h_0)}:\equiv 0.\]
	Now, for a given $X \in \mathcal{I}_g$, let $\sigma_X \in (2^{<\omega})^\omega$ enumerate the family $\bigcup_{i \in \omega}\{x|_{h_i}: x \in X\}$, such that $\text{ht}(\sigma_X)$ is non-decreasing.
	
	There exists $m \in \omega$ such that $j \geq m \implies \level_X(h_j) \leq g(h_j)+1=j+1$. Hence, for each $i \geq m$,
	\[\begin{aligned}|\bigcup_{j \leq i} \{x|_{h_j}: x \in X\}|&=\sum_{j \leq i} \level_X(h_j)\\
	&\leq m (m+1)+ \sum_{m \leq j \leq i}(j+1)\\
	&=\frac{m(m+1)}{2}+\frac{i(i+3)}{2}+1=\frac{i(i+3)}{2}+M,\end{aligned}\]
where $M \in \omega$ is a constant. Therefore, if $n,i \in \omega$ are such that $\text{ht}_{\sigma_X}(n)=h_i< f^{\ast}(n)$, then $n < i(i+3)/2 + M$, hence $f^{\ast}(i(i+3)/2 + M)> h_i = f^{\ast}(i(i+3))$, and thus $i(i+3)/2 +M > i(i+3)$, which holds for only finitely many $i$. It follows that $\text{ht}_{\sigma_X}\geq^{\ast}f^{\ast}$.
\end{proof}

Denote by $\mathcal{Y}_f^0$ the ideal consisting of those subsets of $2^\omega$ with closure in $\mathcal{Y}_f$. Since each $\mathcal{I}_g$ is closed under the closure operation, we obtain the following corollary.

\begin{cor}\label{cor:yorioka}
If $\mathcal{U}$ is a Laver ultrafilter, then $\mathcal{U}$ is a $\mathcal{Y}_f^0$-ultrafilter for each Yorioka ideal $\mathcal{Y}_f$. In particular, $\mathcal{U}$ is measure zero (and thus nowhere dense).
\end{cor}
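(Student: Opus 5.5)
The plan is to combine Proposition \ref{prp:baumgartnerstyle} with Lemma \ref{lemyor}, using the fact that each $\mathcal{I}_g$ is closed under topological closure. Fix a strictly increasing $f\in\omega^\omega$ and $F:\omega\to 2^\omega$. Lemma \ref{lemyor} provides $g\in\mathcal{H}$ with $\mathcal{I}_g\subseteq\mathcal{Y}_f$. Since $\mathcal{U}$ is Laver, Proposition \ref{prp:baumgartnerstyle} says $\mathcal{U}$ is an $\mathcal{I}_g$-ultrafilter. Hence there is $x\in\mathcal{U}$ with $F[x]\in\mathcal{I}_g$.

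Next I will check that $\overline{A}\in\mathcal{I}_g$ whenever $A\in\mathcal{I}_g$. The key identity is
\[\{h|_n: h\in\overline{A}\}=\{h|_n:h\in A\}\quad\text{for every } n,\]
so $\level_{\overline{A}}=\level_A$. For the nontrivial inclusion, take $h\in\overline{A}$. The basic open set $[h|_n]$ meets $A$, so some $a\in A$ satisfies $a|_n=h|_n$. The defining condition of $\mathcal{I}_g$ depends only on the level function, so $\overline{F[x]}\in\mathcal{I}_g\subseteq\mathcal{Y}_f$. This means $F[x]\in\mathcal{Y}_f^0$, and as $F$ and $f$ were arbitrary, $\mathcal{U}$ is a $\mathcal{Y}_f^0$-ultrafilter for every Yorioka ideal.

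For the ``in particular'' clause, I will fix any single strictly increasing $f$. Yorioka's theorem says $\mathcal{Y}_f$ consists of measure zero sets. So every $A\in\mathcal{Y}_f^0$ has closure in $\mathcal{Y}_f$, hence closure of measure zero. Thus $\mathcal{Y}_f^0$ is contained in the ideal of sets with null closure, and $\mathcal{U}$ is a measure zero ultrafilter. A closed set of measure zero has empty interior in $2^\omega$, since every nonempty basic open set $[s]$ has positive measure. So sets with null closure are nowhere dense, and $\mathcal{U}$ is also a nowhere dense ultrafilter.

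I expect no real obstacle, since Lemma \ref{lemyor} does the substantive work. The one point needing care is the closure-invariance of $\level$, and the argument above settles it.
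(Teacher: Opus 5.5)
Your proposal is correct and is essentially the paper's argument. The paper likewise combines Lemma~\ref{lemyor} with Proposition~\ref{prp:baumgartnerstyle} to get $F[x]\in\mathcal{I}_g\subseteq\mathcal{Y}_f$, then uses that $\mathcal{I}_g$ is closed under taking closures to conclude $F[x]\in\mathcal{Y}_f^0$. The paper only asserts that closure property, and your identity $\level_{\overline{A}}=\level_A$ is the right justification; the measure zero and nowhere dense clauses then follow from Yorioka's theorem exactly as you describe.
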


As discussed in the introduction, the countable closed ultrafilters naturally strengthen the property of being a $\mathcal{Y}_f^0$-ultrafilter for each $\mathcal{Y}_f$. Our next result shows that \textsf{MA}($\sigma$-linked) implies the existence of a Laver ultrafilter $\mathcal{U}$ that is not scattered, i.e., not an $\mathcal{I}$-ultrafilter for the ideal $\mathcal{I}$ of scattered subsets of $2^\omega$. The class of scattered ultrafilters contains the countable closed ultrafilters, as well as the discrete- and the \emph{$\sigma$-compact ultrafilters}. Hence, it follows that Corollary \ref{cor:yorioka} captures all the \textsf{ZFC}-inclusions among the classes of Laver ultrafilters and the other $\mathcal{I}$-ultrafilter classes for the ideals $\mathcal{I}$ on $2^\omega$ studied by Baumgartner \cite{baumgartner-uf}, Barney \cite{barney-uf}, and Brendle \cite{brendle-between}.

The core of the construction of the above ultrafilter $\mathcal{U}$ is contained in the following lemma. We will construct $\mathcal{U}$ on the countable set $\mathbb{Q}\subseteq 2^\omega$.

\begin{lem}[\textsf{MA}($\sigma$-linked)]\label{lem:starnonscattered}
	Assume that $\mathcal{X}\subseteq [\mathbb{Q}]^{\omega}, |\mathcal{X}|<\mathfrak{c}$, is a family of non-scattered sets that is closed under finite intersections. Let $F: \mathbb{Q}\to 2^\omega$ and $f \in \omega^{\omega}$ be non-decreasing and unbounded. There exists $A \in [\mathbb{Q}]^\omega$ such that $\level_{F[A]}\leq f+1$ and $A \cap X$
 is non-scattered for each $X \in \mathcal{X}$.
 \end{lem}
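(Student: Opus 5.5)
The idea is to build $A$ by a $\sigma$-linked poset of finite approximations and apply \textsf{MA}($\sigma$-linked) to fewer than $\mathfrak{c}$ dense sets. Before that, one choice removes the tension between the level bound and the need to meet many $X$.

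\textbf{Fixing a branch.} Let $\mathcal{S}$ be the ideal of scattered subsets of $\mathbb{Q}$. A finite union of scattered sets is scattered, and every finite intersection of members of $\mathcal{X}$ lies in $\mathcal{X}$ and so is non-scattered. Hence $\mathcal{X}$ generates a proper filter modulo $\mathcal{S}$. Extend it, by Zorn's lemma, to a family $\mathcal{G}\supseteq\mathcal{X}$ of non-scattered sets that is closed under finite intersections and, for every $Y\subseteq\mathbb{Q}$, contains $Y$ or $\mathbb{Q}\setminus Y$ modulo $\mathcal{S}$. For each $n$, the sets $F^{-1}[[s]]$ with $s\in 2^n$ finitely partition $\mathbb{Q}$, so exactly one of them lies in $\mathcal{G}$. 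These nodes cohere into a branch $y\in 2^\omega$ such that $X\cap F^{-1}[[y|_n]]$ is non-scattered for every $X\in\mathcal{X}$ and every $n$. Points whose $F$-value extends $y|_n$ contribute only the single node $y|_n$ at level $n$. So it suffices to build $A$ so that $A\setminus F^{-1}[[y|_n]]$ has $F$-image meeting at most $f(n)$ nodes of $2^n$, and this finite budget is usable because $f$ is unbounded.

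\textbf{The poset.} A condition is $p=(a_p,N_p,\Phi_p)$ where:
\begin{itemize}
\item $a_p\in[\mathbb{Q}]^{<\omega}$ and $\level_{F[a_p]}\leq f+1$;
\item $N_p\in\omega$, and every $q\in a_p$ added so far either has $F(q)|_{N_p}=y|_{N_p}$ or was counted against the budget;
\item $\Phi_p$ is a finite set of promises $(X,t)$ with $X\in\mathcal{X}$, $t\in 2^{<\omega}$, and $X\cap[t]\cap F^{-1}[[y|_{N_p}]]$ non-scattered.
\end{itemize}
Extension $r\leq p$ means $a_r\supseteq a_p$, $N_r\geq N_p$, $\Phi_r\supseteq\Phi_p$, and every $q\in a_r\setminus a_p$ lies in $F^{-1}[[y|_{N_p}]]$ and inside $[t]$ for some promise of $p$. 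Since the new points stay on the branch, the level bound is automatically preserved beyond $N_p$. The pairs $(a,N)$ are countable in number. Two conditions sharing $(a,N)$ are compatible via $(a,N,\Phi_p\cup\Phi_r)$, because the promise requirement depends only on $y|_N$. So the poset is $\sigma$-linked.

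\textbf{Dense sets.} Density of raising $N$ is trivial, using the choice of $y$. The sets $\{p:(X,\emptyset)\in\Phi_p\}$ for $X\in\mathcal{X}$ are dense, since $X\cap F^{-1}[[y|_N]]$ is non-scattered. The key sets are $D_{X,t,m}$: if $(X,t)\in\Phi_p$, then there is a point $q\in a_p\cap X\cap[t]$, and both $(X,q|_{m'})$ and some $(X,t')$ are promised, where $m'\geq m$, $t'\supseteq t$ and $[t']\not\ni q$.

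To get into $D_{X,t,m}$, pick $q$ in the perfect kernel of $X\cap[t]\cap F^{-1}[[y|_{N_p}]]$. Then every neighbourhood of $q$ meets that set non-scatteredly, so $(X,q|_{m'})$ is a legitimate promise. Splitting the kernel gives $t'$. Adding $q$ is harmless for the level bound because $F(q)$ extends $y|_{N_p}$. One must still ensure that $F(q)$ above $N_p$ does not break levels $n>N_p$. To do this, first raise $N_p$ to $N$ and pick $q$ in the kernel relative to $y|_N$, choosing $N$ large enough that $f(n)\geq|a_p|+1$ for all $n\geq N$. Below $N$ the point $q$ agrees with $y$, which is already counted. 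I expect this bookkeeping to be the main obstacle, and the budget argument just given is the first thing I would try to make precise.

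\textbf{Conclusion.} Meet the fewer than $\mathfrak{c}$ dense sets $D_{X,t,m}$ and the $N$-raising sets, and let $A=\bigcup_{p\in G}a_p$. Then $\level_{F[A]}\leq f+1$, and $A$ is infinite. For each $X\in\mathcal{X}$, the points of $A\cap X$ added under $X$-promises form a nonempty set $B$. Each $q\in B$ receives promises $(X,q|_{m})$ for all $m$, so $[q|_m]$ contains another point of $B$. Hence $B$ is dense-in-itself, and $A\cap X$ is non-scattered.
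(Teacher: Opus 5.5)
There is a genuine gap. It sits at the step you flag as the main obstacle, but the problem is not the budget. It is your claim that raising $N$ is ``trivial, using the choice of $y$''.

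The choice of $y$ only protects the sets $X\cap F^{-1}[[y|_n]]$ with $X\in\mathcal{X}$. Your promises concern localised sets $X\cap[t]$, which are not in $\mathcal{X}$, and the points of $X$ near a given $q$ may have $F$-values that leave $y$ at some fixed level. Once $D_{X,t,m}$ has produced a promise $(X,q|_{m'})$, the requirement that $X\cap[q|_{m'}]\cap F^{-1}[[y|_N]]$ be non-scattered can fail for all large $N$. Then no extension can raise $N$, and your recipe for adding the next point (raise $N$ until $f(n)\geq|a_p|+1$, then pick $q$ relative to $y|_N$) cannot be carried out.

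This is structural, not bookkeeping. Suppose all your dense sets were met, and take $p_0\in G$ with $N_{p_0}\geq n$. By the definition of your order, every point of $A$ outside the finite set $a_{p_0}$ has $F$-value extending $y|_n$. So $F(q)\to y$ along $A$.

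A concrete instance: let $Q_k:=\{x\in\mathbb{Q}: x|_{k+1}=0^k1\}$, which is clopen and dense-in-itself. Let $F$ be constant with value $z_k$ on $Q_k$, where $z_k(i)=1$ iff $i=k$, and put $F(\bar 0):=\bar 0$. Let $\mathcal{X}:=\{X_n:n\in\omega\}$ with $X_n:=\bigcup_{k\geq n}Q_k$.
\begin{itemize}
\item Your construction necessarily picks $y=\bar 0$, since $X_n\subseteq F^{-1}[[0^n]]$.
\item Then $F(q)\to y$ along $A$ forces every $A\cap Q_k$ to be finite, so every $A\cap X_n$ is discrete.
\item Locally, a promise $(X_j,q|_{m'})$ with $[q|_{m'}]\subseteq Q_k$ already caps $N$ at $k$, because $Q_k\cap F^{-1}[[0^N]]=\emptyset$ for $N>k$.
\item No other choice of $y$ helps, since at most one $z_k$ equals $y$.
\end{itemize}
Yet the lemma holds here. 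Take $A:=\bigcup_{k\in K}Q_k$ for any $K$ sparse enough that $|K\cap n|\leq f(n)$ for all $n$. Then $\level_{F[A]}(n)=1+|K\cap n|\leq f(n)+1$.

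The missing idea is that the budget must be charged per node rather than per point, and each promise needs its own node, free to leave $y$.

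This is what the paper's conditions $\langle A_p,\mathcal{X}_p,n_p,S_p\rangle$ do. Here $S_p\subseteq 2^{n_p}$ is a finite set of admissible nodes with levels $\leq f+1$, and $A_p\subseteq\bigcup_{s\in S_p}F^{-1}([s])$. When the height is raised, the new nodes are chosen as follows.
\begin{itemize}
\item Each triple $(a,X,s)$ with $a\in X\cap C_{X,s}$ gets its own extension $t(a,X,s)\supseteq s$ with $a\in C_{X,t(a,X,s)}$. This is Claim~\ref{claim:extension}, a pigeonhole argument.
\item One further node $t_0$ is kept on which every member of $\mathcal{X}$ stays non-scattered.
\end{itemize}
Your $y$ plays only the role of $t_0$: it lets new members of $\mathcal{X}$ enter, but it cannot keep old promises alive. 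Clause $(\dagger)$ of the order preserves the tracking nodes. Condition (i), $f(n_p)>2|A_p||\mathcal{X}_p||S_p|+|A_p|+3$, ensures that the next $S_p$ has fewer than $f(n_q)$ elements, so it respects the level bound between $n_q$ and $n_p$. In the example above, this is what allows infinitely many points of $Q_k$ to share the single off-branch node $z_k|_n$.
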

 
 \begin{proof}
 Note the well-known fact that for each $A \subseteq 2^\omega$, there is a unique decomposition $A=d(A) \cup (A \setminus d(A))$, where $d(A)$ is dense-in-itself and $A \setminus d(A)$ is scattered. For $X \in \mathcal{X}, s \in 2^{<\omega}$, write
 \[C_{X,s}:=\cls\left(d(X \cap F^{-1}([s]))\right),\]
 i.e., $C_{X,s}$ is the (possibly empty) closure in $2^\omega$ of the dense-in-itself portion of $X \cap F^{-1}([s])$.
 
 We will repeatedly need the following claim.
 
 \begin{claim}\label{claim:extension}
	Assume $X \in \mathcal{X}$, $s \in 2^{<\omega}$ and $a \in C_{X,s}$. For each $n \geq |s|$, there exists $t \in 2^n$, $t \supseteq s$, such that $a \in C_{X,t}$.  
\end{claim}

\begin{proof}
Since $a \in C_{X,s}=\cls\left(d(X \cap F^{-1}([s]))\right) $, there exists for each $m \in \omega$ some $b_m \in d(X \cap F^{-1}([s]))$ such that $b_m|_m = a|_m$. Since $X \cap F^{-1}([s]) \cap [b_m|_m]$ is non-scattered, there exists an element of the finite partition
\[\{F^{-1}([t])\cap X \cap F^{-1}([s]) \cap [b_m|_m]: t \in 2^{n}\}\]
that is non-scattered. Say that this is the case for $t=t_m$. By the pigeonhole principle, there exists some $\bar t \in 2^n$ such that $t_m=\bar t$ for infinitely many $m$. This $\bar t$ is as desired.
\end{proof}
 
 We are now ready to define the partial order $\mathbb{P}$ on which we use \textsf{MA}($\sigma$-linked). Let $\mathbb{P}$ consist of conditions $p=\langle A_p, \mathcal{X}_p, n_p, S_p\rangle$, where
 \[A_p \in [\mathbb{Q}]^{<\omega}, \mathcal{X}_p \in [\mathcal{X}]^{<\omega}, n_p \in \omega \text{ and } S_p \in [2^{n_p}]^{<\omega},\]
 satisfying
 \begin{enumerate}[label=(\roman*)]
 \item $f(n_p)> 2 \cdot |A_p| \cdot |\mathcal{X}_p|\cdot |\mathcal{S}_p|+ |A_p|+3$,
 \item there exists $s \in S_p$ such that $d(X \cap F^{-1}([s]))\neq \emptyset$ for each $X \in \mathcal{X}$,
 \item $A_p \subseteq \bigcup_{s \in S_p}F^{-1}([s])$,
 \item $S_p$ has levels of size $\leq f+1$, i.e., $\forall n \leq n_p: |\{s|_n: s \in S_p\}| \leq f(n)+1$.
 \end{enumerate}

Let $p \leq q$ if and only if $A_p \supseteq A_q$, $\mathcal{X}_p \supseteq \mathcal{X}_q$, $n_p \geq n_q$, $\forall s \in S_p \exists t \in S_q: s \supseteq t$, and
\[(\dagger)\; \forall a \in A_q \forall X \in \mathcal{X}_q \forall s \in S_q: a \in X \cap C_{X,s} \implies \exists t \in S_p: a \in X \cap C_{X,t}.\]

\begin{claim}\label{claim:linked}
	If $q,q' \in \mathbb{P}$ are such that $A_q =A_{q'}$, $n_q=n_{q'}$ and $S_q=S_{q'}$, then $q$ and $q'$ are compatible, hence $\mathbb{P}$ is $\sigma$-linked.
\end{claim}

\begin{proof}
Define $\mathcal{X}_{p}:=\mathcal{X}_{q} \cup \mathcal{X}_{q'}$, $A_p:=A_q=A_{q'}$, let $n_p \geq n_q=n_{q'}$ be such that
\[f(n_p)> 2 \cdot |A_p|\cdot |\mathcal{X}_p| \cdot f(n_q) + |A_p| + 3,\]
and write $S:=S_q=S_{q'}$.

For each $\langle a, X, s\rangle \in A_p \times \mathcal{X}_p \times S$, choose $t(a,X,s)\in 2^{n_p}$, $t(a,X,s) \supseteq s$ such that,
\[\text{if } a \in X \cap C_{X,s}, \text{ then } a \in X \cap C_{X, t(a,X,s)}.\]
This is possible by Claim \ref{claim:extension}.

Furthermore, for the $s_0 \in S$ given by (ii), let $t_0 \in 2^{n_p}$,  $t_0 \supseteq s_0$ be such that (ii) is satisfied for $t_0$. This is possible because one of the elements of the partition $\{F^{-1}([t]): t \in 2^{n_p}\cap [s_0]\}$ of $[s_0]$ must have non-scattered intersection with each $X \in \mathcal{X}$, since $\mathcal{X}$ is closed under finite intersections. Define
\[S_p:=\{t(a,X,s): \langle a, X, s\rangle \in A_p \times \mathcal{X}_p \times S\} \cup \{a|_{n_p}: a \in A_p\} \cup \{t_0\}.\]
Assume without loss of generality that $|\mathcal{X}_q|\geq |\mathcal{X}_{q'}|$. Note that
\[\begin{aligned}
	|S_p|&\leq |A_p| \cdot (|\mathcal{X}_q|+|\mathcal{X}_{q'}|)\cdot |S|+|A_p|+1 \\
	&\leq 2 \cdot |A_q| \cdot |\mathcal{X}_q| \cdot |S_q|+|A_q|+1\\
	&< f(n_{q})=f(n_{q'})
\end{aligned}.\]
It follows that the choice of $n_p$ above was large enough to satisfy (i) and that $S_p$ has levels of size $\leq f+1$. Hence $p:=\langle A_p, \mathcal{X}_p, n_p, S_p\rangle$ is indeed a $\mathbb{P}$-condition and it extends both $q$ and $q'$.
\end{proof}

We want to show that if $G\subseteq \mathbb{P}$ is a $\mathcal{D}$-generic filter for a certain family $\mathcal{D}$ of open dense sets, then $A_G:=\bigcup \{A_p: p \in G\}$ is as desired. Note that by (i) and (iii), $\level_{F[A_p]}\leq f+1$ for each $p \in \mathbb{P}$, hence $\level_{F[A_G]}\leq f+1$ will hold for any filter $G$.

For each $X \in \mathcal{X}$, define
\[D_X:=\{p \in \mathbb{P}: X \in \mathcal{X}_p \land \exists s \in S_p: A_p \cap X \cap C_{X,s}\neq \emptyset\}.\]

\begin{claim}\label{claim:DX}
$D_X$ is open dense.	
\end{claim}

\begin{proof}
Note that by condition $(\dagger)$ in the ordering of $\mathbb{P}$, $D_X$ is open. To see that it is dense, assume $q \notin D_X$. Define $\mathcal{X}_p:=\mathcal{X}_q \cup \{X\}$ and let $n_p \geq n_q$ be such that $f(n_p)> 2 \cdot (|A_q|+1)\cdot (|\mathcal{X}_q|+1) \cdot f(n_q)+(|A_q|+1)+3$. Let $s_0 \in S_q$ be such that $d(X \cap F^{-1}([s_0]))\neq \emptyset$, as guaranteed by (ii). Choose some $a^{\ast} \in d(X \cap F^{-1}([s_0]))$. Since, in particular, $a^{\ast} \in C_{X,s_0}$, we find by Claim \ref{claim:extension} some $t^{\ast} \in 2^{n_p}$, $t^{\ast} \supseteq s_0$ with $a^{\ast} \in C_{X,t^{\ast}}$. Furthermore, we find some $t_0 \in 2^{n_p}$ extending $s_0$ such that (ii) is satisfied for $t_0$.

Finally, choose $t(a,X,s)\in 2^{n_p}$ for each $\langle a, X, s\rangle \in A_q \times \mathcal{X}_q \times S_q$ as in the proof of Claim \ref{claim:linked} and define
\[\begin{aligned}
	A_p&:=A_q \cup \{a^{\ast}\},\\
	S_p&:=\{t(a,X,s): \langle a, X, s\rangle \in A_q \times \mathcal{X}_q \times S_q\} \cup \{a|_{n_p}: a \in A_p\} \cup \{t_0, t^{\ast}\}.
\end{aligned}\]
Note that $|S_p|\leq |A_q|\cdot |\mathcal{X}_q|\cdot |S_q|+ |A_q|+1 +2 < f(n_q)$, hence $n_p$ is large enough to satisfy (i) and $|S_p|$ has levels of size $\leq f+1$. Thus, $p:=\langle A_p, \mathcal{X}_p, n_p, S_p \rangle \in D_X$ and $p \leq q$.
\end{proof}

Finally, define for each $a \in \mathbb{Q}$, $X \in \mathcal{X}$ the set
\[E_{a,X}:=\{p \in \mathbb{P}: X \in \mathcal{X}_p \land \exists s \in S_p:  a \in A_p \cap X \cap C_{X,s}\},\]
 and for $k \in \omega$, define
\[\begin{aligned}D_{a, X, k}:=\{q \in \mathbb{P}: q \text{ is incompatible with every } p \in E_{a,X}, \text { or } \\
X \in \mathcal{X}_q \land \exists t \in S_q \exists a' \in A_q \cap X \cap C_{X,t}: a'|_k = a|_k \land a' \neq a\}.
\end{aligned}\]

\begin{claim}
$D_{a,X,k}$ is open dense.
\end{claim}

\begin{proof}
It again follows from $(\dagger)$ that $D_{a,X,k}$ is open. To check density, assume $q \notin D_{a,X,k}$. In particular, $q$ is compatible with some $p_0 \in E_{a,X}$. Let $q \geq p \leq p_0$. It follows that $X \in X_p$, and that, by $(\dagger)$, there is some $t \in S_{p}$ such that $a \in A_{p}\cap X \cap C_{X,t}$. Let $a' \in d(X \cap F^{-1}([t]))$ be such that $a'|_k=a|_k$ and $a \neq a'$. Define $A_{\bar p}=A_p \cup \{a'\}$, $\mathcal{X}_{\bar p}=\mathcal{X}_p$ and $n_{\bar p}, S_{\bar p}$ as in the proof of Claim \ref{claim:DX}.
\end{proof}

Now, assume $G$ is generic for each $D_X$ and each $D_{a,X,k}$. We claim that for each $X \in \mathcal{X}$, the set 
\[d_X:=\{a \in \mathbb{Q}: G \cap E_{a,X} \neq \emptyset\},\] which is a subset of $A_G \cap X$, is non-empty and dense-in-itself. It is clear that $d_X$ is non-empty, since $G$ intersects $D_X$. To see that it is dense-in-itself, let $a \in d_X$ and $k \in \omega$. There exists $p \in G \cap E_{a,X}$ and $q \in G \cap D_{a,X,k}$. Since $G$ is directed, $p$ and $q$ are compatible, and hence 
\[X \in \mathcal{X}_q \land \exists t \in S_q \exists a' \in A_q \cap X \cap C_{X,t}: a'|_k = a|_k \land a' \neq a.\]
Hence, $q \in E_{a', X}$ and therefore $a' \in d_X$.
\end{proof}

\begin{thm}[\textsf{MA}($\sigma$-linked)]
	There exists a non-scattered Laver ultrafilter.
\end{thm}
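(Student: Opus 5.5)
We build the ultrafilter $\mathcal{U}$ on the countable set $\mathbb{Q}$ by a transfinite recursion of length $\mathfrak{c}$, using Lemma \ref{lem:starnonscattered} at every successor step. Under \textsf{MA}($\sigma$-linked) we have $\mathfrak{c}^{<\mathfrak{c}}=\mathfrak{c}$, and there are exactly $\mathfrak{c}$ pairs $\langle F,f\rangle$ with $F:\mathbb{Q}\to 2^\omega$ and $f\in\omega^\omega$ non-decreasing and unbounded. Enumerate these pairs as $\langle F_\alpha,f_\alpha\rangle$ for $\alpha<\mathfrak{c}$. Also enumerate $\mathcal{P}(\mathbb{Q})$ as $\langle Y_\alpha:\alpha<\mathfrak{c}\rangle$.

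We build an increasing chain $\langle\mathcal{X}_\alpha:\alpha<\mathfrak{c}\rangle$ of families of non-scattered subsets of $\mathbb{Q}$. Each $\mathcal{X}_\alpha$ is closed under finite intersections, has size $<\mathfrak{c}$ (say $|\mathcal{X}_\alpha|\le|\alpha|+\aleph_0$), and contains the cofinite sets. For $\mathcal{X}_0$ take the cofinite subsets of $\mathbb{Q}$; these are non-scattered, since $\mathbb{Q}$ minus finitely many points is still dense-in-itself. At limit stages take unions; this preserves closure under finite intersections and the cardinality bound.

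At a successor stage $\alpha+1$ we do two things.
\begin{enumerate}[label=(\roman*)]
\item Apply Lemma \ref{lem:starnonscattered} to $\mathcal{X}_\alpha$, $F_\alpha$ and $f_\alpha$. This gives an $A_\alpha$ with $\level_{F_\alpha[A_\alpha]}\le f_\alpha+1$ such that $A_\alpha\cap X$ is non-scattered for every $X\in\mathcal{X}_\alpha$. Let $\mathcal{X}'_\alpha$ be the closure of $\mathcal{X}_\alpha\cup\{A_\alpha\}$ under finite intersections. Its new members have the form $A_\alpha\cap X$, which is non-scattered by the choice of $A_\alpha$.
\item Decide $Y_\alpha$. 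If $Y_\alpha\cap X$ is non-scattered for every $X\in\mathcal{X}'_\alpha$, add $Y_\alpha$ and close under intersections. Otherwise some $X_0\in\mathcal{X}'_\alpha$ has $Y_\alpha\cap X_0$ scattered, and we add $\mathbb{Q}\setminus Y_\alpha$ instead.
\end{enumerate}

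The key point in (ii) is that the scattered sets form an ideal, since a finite union of scattered sets is scattered. So for any $X\in\mathcal{X}'_\alpha$, the set $X\cap X_0$ is non-scattered, while $X\cap X_0\cap Y_\alpha$ is scattered. Hence $X\cap X_0\setminus Y_\alpha$ is non-scattered, and therefore $X\setminus Y_\alpha$ is non-scattered too. The extended family is still closed under intersections, consists of non-scattered sets, and has small size.

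Let $\mathcal{U}=\bigcup_\alpha\mathcal{X}_\alpha$. It is a filter base consisting of non-scattered (in particular infinite) sets, and every $Y\subseteq\mathbb{Q}$ or its complement is in it. Hence $\mathcal{U}$ is a non-principal ultrafilter containing all cofinite sets. It contains the witness $A_\alpha$ for every pair $\langle F_\alpha,f_\alpha\rangle$, so by Lemma \ref{lem:Ff}, transported along a bijection $\omega\cong\mathbb{Q}$, $\mathcal{U}$ is Laver.

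For non-scatteredness, take the inclusion $\iota:\mathbb{Q}\to 2^\omega$. Every $x\in\mathcal{U}$ has $\iota[x]=x$ non-scattered, so $\mathcal{U}$ is not a scattered ultrafilter. Transferring to $\omega$ by the bijection, the same holds with $F$ equal to $\iota$ composed with the bijection.

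The only delicate point is making sure the recursion never breaks. First, the hypotheses of Lemma \ref{lem:starnonscattered} must hold at every stage: the sets lie in $[\mathbb{Q}]^\omega$, the family is closed under intersections, and it has size $<\mathfrak{c}$. Second, the dichotomy in step (ii) relies on scattered sets forming an ideal. With those checked, the argument is routine bookkeeping.
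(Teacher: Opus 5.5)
Your proposal is correct and follows the paper's proof. The paper also runs a transfinite recursion through all pairs $\langle F,f\rangle$, adding witnesses from Lemma~\ref{lem:starnonscattered} while keeping a small, intersection-closed family of non-scattered sets, and then extends to an ultrafilter containing no scattered set, using that the scattered sets form an ideal. The differences are cosmetic: you decide each $Y\subseteq\mathbb{Q}$ during the recursion instead of extending the final filter in one step, and your appeal to $\mathfrak{c}^{<\mathfrak{c}}=\mathfrak{c}$ is harmless but unnecessary, since in \textsf{ZFC} there are only $\mathfrak{c}$ such pairs and $\mathfrak{c}$ subsets of $\mathbb{Q}$ to enumerate.
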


\begin{proof}
By transfinite recursion, iterate through all the pairs $\langle F,f\rangle$, where $F: \mathbb{Q} \to 2^\omega$ and $f \in {^{\omega}}\omega$ is non-decreasing and unbounded. At each stage, add a witness $A \in [\mathbb{Q}]^\omega$ to Lemma \ref{lem:Ff} for the pair $\langle F,f\rangle$, using Lemma \ref{lem:starnonscattered}. Finally, extend the resulting filter to an ultrafilter, avoiding the sets that have non-scattered intersection with an element of the previously constructed filter.
\end{proof}

\section{Existence and Generic Existence}\label{sec:existence}

In this final section, we establish results on the (generic) existence of Laver ultrafilters. In particular, we will prove lower- and upper bounds on their \emph{generic existence number}, investigate whether Laver ultrafilters exist in various classical models of \textsf{ZFC}, and obtain a model in which $P$-points do not exist, but Laver ultrafilters exist generically. Throughout this section, by \emph{Cohen model} we mean the model obtained by adding $\kappa\geq\omega_2$-many Cohen reals to a model of \textsf{CH}, and \emph{random model} means the analogous model for random reals. The \emph{Mathias-, Laver-, Miller-, Silver-}, and the \emph{Sacks models} are the models obtained by iterating the corresponding proper forcing notion with countable supports $\omega_2$-many times over a model of \textsf{CH}.

Recall that a class of ultrafilters $\mathcal{C}$ exists generically if every filter base of size strictly less than $\mathfrak{c}$ can be extended to an ultrafilter in the class $\mathcal{C}$. The following key cardinal invariant was introduced by Brendle and Fla\v{s}kov\'a \cite{brefla17} in their study of the generic existence of $\mathcal{I}$-ultrafilters, for various tall ideals $\mathcal{I}$ on a countable set $X$.

\begin{defi}[\cite{brefla17}]
    $$\mathfrak{ge}(\mathcal{I})=\min\{|\mathcal{F}| : \mathcal{F}\subseteq \mathcal{I}^+ \; \text{is a filter base, and} \; \forall I\in\mathcal{I} \; \exists F\in\mathcal{F} \; (|I\cap F|<\omega)\}.$$
\end{defi}

Since we defined the ideals $\mathcal{I}_f$ on the space $2^\omega$, we will first show that restricting the $\mathcal{I}_f$ to $\mathbb{Q}$ results in an equivalent characterisation of Laver ultrafilters.

For distinct $x,y\in 2^\omega$, write $\text{split}(x,y):=\min\{n \in \omega: x(n)\neq y(n)\}$.

\begin{lem}\label{lem:redefinition}
Let $X \subseteq 2^\omega$ be countable. There exists an injection $\phi_X: X \to \mathbb{Q}$ such that $\forall x \neq y \in X: \text{split}(\phi_X(x),\phi_X(y))= \text{split}(x,y)$.	
\end{lem}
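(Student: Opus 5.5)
We will build $\phi_X$ by modifying each $x\in X$ only at coordinates where this cannot affect any splitting level with another element of $X$. Enumerate $X=\{x_k: k\in\omega\}$; the finite case is identical. The plan is to define $\phi_X(x_k)\in\mathbb{Q}$ as an initial segment of $x_k$ of some length $\ell_k$, followed by zeros, or else by a short tag that keeps distinct images apart.

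First, for each $k$, set
\[m_k:=\max\{\text{split}(x_k,x_j): j<k\}+1,\]
with $m_0:=0$. This is the first level beyond which $x_k$ no longer has to agree with any earlier point. We could try $\phi_X(x_k):=x_k|_{m_k}{}^\smallfrown 0^\infty$. The difficulty is a later point $x_j$ ($j>k$) that shares with $x_k$ a longer initial segment than $m_k$, so that $\text{split}(x_j,x_k)\ge m_k$. Then $\phi_X(x_j)$ must agree with $\phi_X(x_k)$ up to $\text{split}(x_j,x_k)$, which therefore has to lie inside the part of $\phi_X(x_k)$ that copies $x_k$.

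To fix this, we define the images by recursion so that both requirements hold:
\begin{enumerate}[label=(\alph*)]
\item $\phi_X(x_j)$ agrees with $x_j$ on every coordinate below $\text{split}(x_j,x_k)+1$ for all $k<j$;
\item at level $\text{split}(x_j,x_k)$ the two images differ exactly as $x_j$ and $x_k$ do.
\end{enumerate}
The natural object is the tree $T_X:=\{x|_n: x\in X, n\in\omega\}$. The set of splitting nodes together with the branch structure determines all values $\text{split}(x,y)$. The key observation is that the split levels of $x$ with the other points of $X$ need not be bounded, since $x$ may be a limit of other points of $X$. In that case no eventually-zero sequence can copy $x$ on all of them, so the naive approach fails.

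The genuine construction should therefore be a bijective relabelling of coordinates rather than copying. Suppose $\text{split}(\phi(x),\phi(y))=\text{split}(x,y)$ is to hold for all pairs. Then only the order of split levels and the branching pattern matter, and values at splitting nodes can be renamed: at each splitting node of $T_X$ the two children can be swapped freely. Swap the children at every splitting node on $x_k$'s path so that the child leading towards $x_k$ is labelled $0$ whenever possible, giving a homeomorphism $\psi$ of $2^\omega$ that preserves split. The precise rule is to process the points in order $x_0,x_1,\dots$ and make each new point's path eventually read $0$ by fixing the orientation of splitting nodes first met on it. Then $\phi_X:=\psi|_X$, and on each branch beyond its last previously-oriented split we choose the $0$ orientation.

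The main obstacle, and the step to verify carefully, is that every $\psi(x_k)$ is eventually zero. Along $x_k$, every splitting node beyond level $m_k$ is first encountered at stage $k$ and is oriented towards $x_k$. Any non-branching coordinate must also be relabelled to $0$: we allow arbitrary flips at non-branching nodes too, since any flip at a node preserves the split structure. The finitely many nodes below $m_k$ are already fixed, so the image is eventually zero. Injectivity follows from preservation of split.
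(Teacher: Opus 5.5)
Your final construction is correct, but it is packaged differently from the paper's.

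The paper defines $\phi_X$ directly by recursion on the enumeration. It sets $\phi(x_0)$ to be the zero sequence. For $i>0$, $\phi(x_i)$ copies $\phi(x_{j_{\max}})$ below $n_i=\max_{j<i}\text{split}(x_j,x_i)$, takes the value $1$ at $n_i$, and is $0$ afterwards. The verification is left to the reader, and it needs a small argument. One must show $\phi(x_{j_{\max}})(n_i)=0$, which uses that a node has only two children together with the maximality of $n_i$. One also needs a case distinction for the remaining $j<i$.

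You instead obtain $\phi_X$ as the restriction of a node-flip automorphism of $2^{<\omega}$. Formally, $\psi(x)(n)=|x(n)-\epsilon(x|_n)|$, where $\epsilon(s)=x_k(|s|)$ for the least $k$ with $s\subseteq x_k$, and $\epsilon(s)=0$ if there is none. Split-preservation, and hence injectivity, is then automatic: $\psi(x)|_n$ depends only on $x|_n$, and the flip at a node acts uniformly on everything passing through it. The only remaining check is that $\psi(x_k)$ vanishes from $m_k$ on. This holds because no node $x_k|_n$ with $n\geq m_k$ lies on an earlier $x_j$.

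In fact the two maps coincide. For $k>0$ and any $j<k$ realizing the maximal split, your $\psi(x_k)$ agrees with $\psi(x_j)$ below $m_k-1$, equals $1$ at $m_k-1$, and is $0$ afterwards. That is exactly the paper's recursion. So your route trades the paper's explicit one-line formula for a proof with essentially no verification. It also gives a slightly stronger conclusion: $\phi_X$ extends to a split-preserving homeomorphism of $2^\omega$.

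In a write-up, drop the first half. Requirements (a)--(b) are, as you observe yourself, unsatisfiable in general. Also, ``relabelling of coordinates'' is a misnomer, since no coordinates are permuted; only bits at nodes are flipped. Finally, make the infinite composition of swaps precise via the formula for $\psi$ above.
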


It follows that

\begin{lem}
Let $f\in \omega^\omega$ be non-decreasing and unbounded. The following are equivalent:
\begin{enumerate}[label=(\roman*)]
\item For any $F: \omega \to 2^\omega$, there exists $x\in \mathcal{U}$ such that $\level_{F[x]}\leq f+1$.
\item For any $G: \omega \to \mathbb{Q}$, there exists $y\in \mathcal{U}$ such that $\level_{G[y]}\leq f+1$.
\end{enumerate}
\end{lem}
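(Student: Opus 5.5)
The plan is to prove the two implications separately. The direction (i)$\Rightarrow$(ii) is immediate, and the direction (ii)$\Rightarrow$(i) goes through Lemma \ref{lem:redefinition}. The key observation for the latter is that $\level_A(n)$ depends only on the splitting structure of $A$. More precisely, for $A\subseteq 2^\omega$, two points $x,y\in A$ have the same restriction to $n$ if and only if $x=y$ or $\text{split}(x,y)\geq n$. Hence $\level_A(n)$ equals the number of equivalence classes of the relation ``$x=y$ or $\text{split}(x,y)\geq n$'' on $A$. Consequently, any injection $\phi$ from $A$ that preserves $\text{split}$ satisfies $\level_{\phi[A]}(n)=\level_A(n)$ for all $n$.

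For (i)$\Rightarrow$(ii), take $G:\omega\to\mathbb{Q}$. Since $\mathbb{Q}\subseteq 2^\omega$, we may regard $G$ as a map $\omega\to 2^\omega$ and apply (i) to $F:=G$. This yields $x\in\mathcal{U}$ with $\level_{G[x]}\leq f+1$, so $y:=x$ works.

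For (ii)$\Rightarrow$(i), take $F:\omega\to 2^\omega$. The set $X:=F[\omega]$ is countable, so Lemma \ref{lem:redefinition} provides a split-preserving injection $\phi_X:X\to\mathbb{Q}$. Apply (ii) to $G:=\phi_X\circ F:\omega\to\mathbb{Q}$ to obtain $y\in\mathcal{U}$ with $\level_{G[y]}\leq f+1$. Now $G[y]=\phi_X[F[y]]$, and $\phi_X$ restricted to $F[y]$ is an injection preserving $\text{split}$. By the observation above, $\level_{F[y]}(n)=\level_{G[y]}(n)\leq f(n)+1$ for every $n$, so $x:=y$ witnesses (i).

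The only step that needs care is the observation that levels are determined by the split function. Distinct points have distinct images under $\phi_X$, and the equivalence relation defining restrictions to $n$ is expressed purely through $\text{split}$. Therefore $\phi_X$ induces a bijection between the classes of $F[y]$ and those of $\phi_X[F[y]]$ at each level $n$, which gives the required equality of levels. Everything else is bookkeeping, so I do not expect a genuine obstacle. If Lemma \ref{lem:redefinition} itself also had to be proved, that would be the main work: one would build $\phi_X$ by enumerating $X$ and sending each new point to the eventually-zero sequence that agrees with it up to the maximal split with the earlier points and then continues appropriately.
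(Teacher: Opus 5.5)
Your proof is correct and is the paper's argument: (i)$\Rightarrow$(ii) holds because $\mathbb{Q}\subseteq 2^\omega$, and (ii)$\Rightarrow$(i) follows by applying (ii) to $\phi_{F[\omega]}\circ F$. Your check that $\level$ depends only on the $\text{split}$ relation supplies the detail the paper leaves implicit. One small correction to your aside on proving Lemma \ref{lem:redefinition}: the image of a new point $x_i$ should copy $\phi(x_{j_{\text{max}}})|_{n_i}$, the image of the earlier point realising the maximal split, not $x_i|_{n_i}$ itself, and then put a $1$ at position $n_i$ and zeros afterwards.
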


\begin{proof}
It is clear that (i) implies (ii). For the reverse direction, consider the function $\phi_{F[\omega]}\circ F: \omega \to \mathbb{Q}$.
\end{proof}

\begin{proof}[Proof of Lemma \ref{lem:redefinition}]
We will denote $\phi_X$ by $\phi$. Let $\{x_i: i \in \omega\}$ be an enumeration of $X$. Define $\phi(x_0)$ to be the all-zero sequence. Fix $i \in \omega \setminus \{0\}$ and assume by induction that $\phi(x_j)$ is defined for each $j < i$, such that for all distinct $j,j'<i: \text{split}(\phi(x_j),\phi(x_{j'}))= \text{split}(x_j,x_{j'})$. Let $n_i:=\max\{\text{split}(x_j,x_i): j<i\}$ and let $j_{\text{max}} <i$ be such that this maximum is attained at $j_{\text{max}}$. Define
\[\phi(x_i)|_{n_i}:=\phi(x_{j_{\text{max}}})|_{n_i},\; \phi(x_i)(n_i):=1\;\text{and else,}\; \phi(x_i)(m):=0.\]
We leave it to the reader to verify that this $\phi$ works.
\end{proof}

Hence, we will assume in the remainder of the paper that each $\mathcal{I}_f$ is an ideal on the countable set $\mathbb{Q}$.

\begin{defi}
Recall that $\mathcal{H}\subseteq\omega^\omega$ denotes the set of non-decreasing and unbounded functions with $f \leq \text{id}_{\omega}$. Define $$\mathfrak{ge}(\text{Laver}):=\min\{\mathfrak{ge}(\mathcal{I}_f) : f\in\mathcal{H}\}.$$
\end{defi}

The following is the analogue of Observation 3.1 in \cite{brefla17}.

\begin{fct}
The following are equivalent:
\begin{enumerate}[label=(\roman*)]
    \item $\mathfrak{ge}(\text{Laver})=\mathfrak{c}$,
    \item generic existence of Laver ultrafilters.
\end{enumerate}
\end{fct}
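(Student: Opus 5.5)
We prove the two implications separately, in the spirit of Observation 3.1 of Brendle–Fla\v{s}kov\'a. Throughout we work on the countable set $\mathbb{Q}$. We use the reformulation obtained above: $\mathcal{U}$ is Laver iff it is an $\mathcal{I}_f$-ultrafilter for every $f\in\mathcal{H}$ (Proposition \ref{prp:baumgartnerstyle}), where the maps $F$ may be taken $\omega\to\mathbb{Q}$. Since $\omega$ and $\mathbb{Q}$ are both countable, we identify the underlying set of the ultrafilter with $\mathbb{Q}$ via a bijection; every $F:\omega\to\mathbb{Q}$ then becomes a self-map of $\mathbb{Q}$. Note also that $|\mathcal{H}|=\mathfrak{c}$ and that there are $\mathfrak{c}$ many maps $F$.

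\textbf{(i)$\Rightarrow$(ii).} Let $\mathcal{F}_0$ be a filter base with $|\mathcal{F}_0|<\mathfrak{c}$, and enumerate all pairs $\langle F_\alpha,f_\alpha\rangle$, with $F_\alpha:\mathbb{Q}\to\mathbb{Q}$ and $f_\alpha\in\mathcal{H}$, in order type $\mathfrak{c}$. We build an increasing chain of filter bases $\mathcal{F}_\alpha$ with $|\mathcal{F}_\alpha|<\mathfrak{c}$ (taking unions at limits). At stage $\alpha$, consider the pushforward family $F_\alpha[\mathcal{F}_\alpha]=\{F_\alpha[A]:A\in\mathcal{F}_\alpha\}$, which is again a filter base.

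\emph{Case 1:} some $F_\alpha[A]$ lies in $\mathcal{I}_{f_\alpha}$. Then $A$ already witnesses the requirement for $\langle F_\alpha,f_\alpha\rangle$, and we set $\mathcal{F}_{\alpha+1}=\mathcal{F}_\alpha$.

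\emph{Case 2:} the pushforward lies entirely in $\mathcal{I}_{f_\alpha}^+$. Since it has size $<\mathfrak{c}\le\mathfrak{ge}(\mathcal{I}_{f_\alpha})$, it cannot witness the definition of $\mathfrak{ge}(\mathcal{I}_{f_\alpha})$. Hence there is $I\in\mathcal{I}_{f_\alpha}$ meeting every $F_\alpha[A]$ infinitely. Then $\mathcal{F}_{\alpha+1}:=\mathcal{F}_\alpha\cup\{A\cap F_\alpha^{-1}[I]:A\in\mathcal{F}_\alpha\}$ is a filter base, and its new sets map into $I$. One must make sure the extension keeps the sets infinite: this holds because $I$ meets $F_\alpha[A]$ infinitely, so $A\cap F_\alpha^{-1}[I]$ is infinite. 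Also, $F_\alpha^{-1}[I]$ is compatible with each element of $\mathcal{F}_\alpha$, since $\mathcal{F}_\alpha$ is closed under finite intersections (wlog). We also add the cofinite sets at the start.

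Finally, any ultrafilter extending $\bigcup_\alpha\mathcal{F}_\alpha$ is an $\mathcal{I}_f$-ultrafilter for all $f$, hence Laver. The one subtle point is that Case 2 requires the pushforward to be a filter base of positive sets; it is, because it is closed under finite intersections up to containment.

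\textbf{(ii)$\Rightarrow$(i).} Suppose $\mathfrak{ge}(\mathcal{I}_f)<\mathfrak{c}$ for some $f\in\mathcal{H}$, witnessed by a filter base $\mathcal{F}\subseteq\mathcal{I}_f^+$ on $\mathbb{Q}$ with $|\mathcal{F}|<\mathfrak{c}$. Transport $\mathcal{F}$ to $\omega$ via the bijection, with $F$ that bijection. If some Laver $\mathcal{U}$ extended $\mathcal{F}$, pick $x\in\mathcal{U}$ with $I:=F[x]\in\mathcal{I}_f$. By the choice of $\mathcal{F}$ there is $A\in\mathcal{F}$ with $I\cap A$ finite, so $x\cap F^{-1}[A]$ is finite. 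This is a contradiction, since both sets lie in the non-principal ultrafilter $\mathcal{U}$. Hence generic existence fails.

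The main point of care in the whole argument is Case 2 of the recursion, namely checking that the pushforward is a filter base in $\mathcal{I}^+$ so that the definition of $\mathfrak{ge}$ applies. The rest is routine bookkeeping of cardinalities.
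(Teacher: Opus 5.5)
Your proposal is correct. It is the standard argument behind Observation~3.1 of Brendle--Fla\v{s}kov\'a, which the paper cites as the analogue without writing out a proof: a length-$\mathfrak{c}$ recursion over pairs $\langle F,f\rangle$ that applies $\mathfrak{ge}(\mathcal{I}_f)\geq\mathfrak{c}$ to the pushforward filter bases for (i)$\Rightarrow$(ii), and transporting a small witnessing filter base for the converse.

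Two points are left implicit but are harmless:
\begin{enumerate}[label=(\alph*)]
\item $\mathfrak{ge}(\mathcal{I}_f)\leq\mathfrak{c}$ always holds, since the dual filter of $\mathcal{I}_f$ is a witness. So the failure of (i) does mean that some $\mathfrak{ge}(\mathcal{I}_f)<\mathfrak{c}$.
\item Your successor steps do not increase cardinality, so $|\mathcal{F}_\alpha|\leq|\mathcal{F}_0|+|\alpha|+\aleph_0$. This explicit bound, rather than merely $|\mathcal{F}_\alpha|<\mathfrak{c}$, is what keeps the limit stages below $\mathfrak{c}$ if $\mathfrak{c}$ is singular.
\end{enumerate}
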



As $\text{cov}(\mathcal{M})$ is the Martin number for countable partial orders, it is not hard to see that

\begin{prp}\label{prp:lowerboundcov}
$\text{cov}(\mathcal{M})\leq\mathfrak{ge}(\text{Laver})$.
\end{prp}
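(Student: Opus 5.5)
To prove the statement, I fix $f\in\mathcal{H}$ and a filter base $\mathcal{F}\subseteq\mathcal{I}_f^+$ on $\mathbb{Q}$ with $|\mathcal{F}|<\text{cov}(\mathcal{M})$. I will show that some $I\in\mathcal{I}_f$ has $|I\cap F|=\omega$ for every $F\in\mathcal{F}$. Then no such $\mathcal{F}$ witnesses $\mathfrak{ge}(\mathcal{I}_f)$, so $\mathfrak{ge}(\mathcal{I}_f)\geq\text{cov}(\mathcal{M})$ for every $f$, and hence $\text{cov}(\mathcal{M})\leq\mathfrak{ge}(\text{Laver})$. I may close $\mathcal{F}$ under finite intersections, which keeps it in $\mathcal{I}_f^+$ because it is a filter base of positive sets. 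I will also fix some $g\in\mathcal{H}$ with $g(n)\leq f(n)/n$ eventually, or more crudely $g=o(f)$ with $g$ still unbounded. Building $I$ with $\level_I\leq^* g$ then suffices for $I\in\mathcal{I}_f$, since $g\leq^*\frac1d f$ for every $d$.

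I will use the countable forcing $\mathbb{P}$ whose conditions are pairs $(A,n)$. Here $A\in[\mathbb{Q}]^{<\omega}$, $n\in\omega$, and $g(n)\geq |A|$ (so in particular $g(n)\geq\level_A(m)$ for all $m$). The order is $(B,m)\leq(A,n)$ iff $B\supseteq A$, $m\geq n$, and every new point $b\in B\setminus A$ satisfies $b|_{n}\in\{a|_n:a\in A\}$. This poset is countable, and the order controls the levels. Indeed, for a filter $G$ with $I=\bigcup_{(A,n)\in G}A$ and any level $k$, pick a condition $(A,n)\in G$ with $n\geq k$ large. All points added afterwards agree below $n$, hence below $k$, with points of $A$, so $\level_I(k)\leq |A|\leq g(n)$. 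That alone does not give $\level_I(k)\leq^* g(k)$ at small $k$ relative to $n$, so I will refine the bookkeeping. I can either require for $(A,n)$ that $\level_A(k)\leq g(k)$ for all $k\leq n$ and $|A|\leq g(n)$, or simply demand $\level_A\leq g$ everywhere. Extensions $(B,m)$ must copy $A$'s traces up to $n$, which keeps $\level_B(k)=\level_A(k)$ for $k\leq n$. For $k>n$, the number of new traces is at most $|B|\leq g(m)$, and I will require that $g(n)\geq|A|$ already bounds all levels beyond $n$, which $\level_B(k)\leq|B|$ makes automatic. Thus the invariant is simply $\level_A\leq g$ together with $|A|\leq g(n)$.

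The key step, which I expect to be the main obstacle, is density of $D_{F,j}=\{(A,n): |A\cap F|\geq j\}$ for $F\in\mathcal{F}$. Given $(A,n)$, I need a new point of $F\setminus A$ that extends some trace $a|_n$, while keeping the level bounds. Some cone $[a|_n]$ (with $a\in A$) might meet $F$ only in a set in $\mathcal{I}_f$. To handle this, I will choose conditions in which every trace $s=a|_n$ has $F\cap[s]\in\mathcal{I}_f^+$ for all $F\in\mathcal{F}$. By finite intersection closure and the pigeonhole principle over the finitely many cones, some cone has positive intersection with all members of $\mathcal{F}$. I start the construction inside such a cone and always add new points along positive cones. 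This requirement does not refer to a single generic, and I will encode it in $\mathbb{P}$ by restricting to traces that are $\mathcal{F}$-positive. Since $F\cap[s]\in\mathcal{I}_f^+$ has levels not eventually below $\frac1d f$, it has infinitely many points, and these points branch at arbitrarily high levels. So I can pick $b\in F\cap[s]\setminus A$ and an $m$ so large that $g(m)\geq|A|+1$. The traces of $b$ below $m$ add at most one new node per level, and beyond $n$ the levels stay below $|A|+1\leq g(m)$. For levels between $n$ and the level where the bound $g$ becomes large enough, I use the slack from $g=o(f)$ together with the positivity of cones, which lets me choose $b$ branching off late enough. Finally, since there are fewer than $\text{cov}(\mathcal{M})$ dense sets $D_{F,j}$, $\text{MA}_{\text{countable}}$ gives a filter meeting all of them. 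The resulting $I$ lies in $\mathcal{I}_g\subseteq\mathcal{I}_f$ and meets every $F$ in an infinite set.
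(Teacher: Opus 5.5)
There is a genuine gap, and it sits exactly at the step you flag as the main obstacle: the density of $D_{F,j}$. In your order, a new point $b$ may enter below $(A,n)$ only if $b|_n=a|_n$ for some $a\in A$. To respect $\level\le g$ at the intermediate levels, $b$ must in fact agree with some $a\in A$ up to a level $K$ with $g(K)\geq |A|+1$. So you need the members of $\mathcal{F}$ to accumulate at points of $A$ itself. Your positivity device cannot deliver this. Positivity of $[a|_n]$ says nothing about $[a|_K]$ for $K>n$. And if you demand positivity of all traces in every condition, the invariant breaks as soon as $n$ grows.

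Here is a concrete example. Fix $\eta\in 2^\omega$, let $x_i\in\mathbb{Q}$ satisfy $\text{split}(x_i,\eta)=i$, and let $\mathcal{F}$ consist of the tails $T_k=\{x_i:i\geq k\}$.
\begin{itemize}
\item Since $\level_{T_k}(m)=m-k+1$ for $m\geq k$ and $f\leq\text{id}_\omega$, each $T_k$ lies in $\mathcal{I}_f^+$, so $\mathcal{F}$ is a countable filter base of positive sets.
\item The only $\mathcal{F}$-positive cones are the $[\eta|_k]$, and no $x_i$ is an accumulation point of $\mathcal{F}$.
\item Below $(\{x_5\},n)$ with $n>5$, no further point of any $T_k$ can ever be added, so $D_{F,2}$ is not dense.
\item If instead you forbid non-positive traces, every extension of a condition containing $x_5$ has $n\leq 5$, hence $|A|\leq g(5)$. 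Then $D_{F,j}$ is not dense for $j>g(5)$.
\end{itemize}
The ``slack from $g=o(f)$'' does not help, because the bound you must respect at the intermediate levels is $g$ itself.

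The missing idea is to separate the spine from the points. Your pigeonhole argument plus K\"onig's lemma (or the branch determined by any ultrafilter extending $\mathcal{F}$) gives a branch $\eta$ all of whose cones $[\eta|_k]$ meet every member of $\mathcal{F}$ in an infinite set. New points should then branch off $\eta$, not off points of $A$. Each new point should branch in a fresh block of levels $[h(i),h(i+1))$ on which $f\geq i$, and $\eta|_k$ is counted as one extra node per level. At a level $k\in[h(i),h(i+1))$ one then sees $\eta|_k$ plus at most one node for each earlier block, so $\level\leq i+1\leq f(k)+1$. Density becomes trivial, since every $F\cap[\eta|_{h(i)}]$ is infinite and one can pick a point there whose block index is unused.

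This is precisely the paper's proof. It uses the countable poset of finite sets $X\subseteq\mathbb{Q}\setminus\{\eta\}$ whose elements have pairwise distinct, nonzero block indices of $\text{split}(x,\eta)$, ordered by inclusion. To land in $\mathcal{I}_f$ rather than merely below $f+1$, run the argument with a slower function in $\mathcal{H}$, as you intended with $g$.
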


\begin{proof}
Let $f\in \mathcal{H}$ and assume that $\mathcal{F} \subseteq [\mathbb{Q}]^\omega$ is a filter base of cardinality $<\text{cov}(\mathcal{M})$. Let $g_f\in \omega^\omega$ with $g_f(0)=0$ be strictly increasing, such that $\forall i \in \omega \;\forall n \in [g_f(i), g_f(i+1)): f(n)\geq n$. Let $\eta \in 2^{\omega}$ be such that $[\eta|_n]\cap \mathbb{Q}$ extends $F$ to a filter base for any $n \in \omega$. For $x\in \mathbb{Q}\setminus \{\eta\}$, define $\text{split}(x)\in \omega$ to be the unique $i \in \omega$ with $\text{split}(x, \eta)\in [g(i), g(i+1))$. Consider the countable partial order
\[\mathbb{P}:=\{X \in [\mathbb{Q}\setminus \{\eta\}]^{<\omega}: \forall x\neq y \in X: \text{split}(x)>0 \land \text{split}(x)\neq \text{split}(y)) \},\]
ordered by inclusion. The obvious choice of open dense sets of $\mathbb{P}$ clearly produces an $A \in [\mathbb{Q}\setminus \{\eta\}]^{\omega}$ extending $\mathcal{F}$ with $\level_A \leq f+1$.
\end{proof}

We will now find an additional lower bound for $\mathfrak{ge}(\text{Laver})$. Let us recall that a set $X\subseteq2^\omega$ is called \emph{null-additive} if $X+N\in\mathcal{N}$ for all $N\in\mathcal{N}$, where $\mathcal{N}$ denotes the ideal of measure zero subsets of $2^\omega$. The characteristic $\text{non}(\mathcal{NA})$ is defined to be the smallest cardinality of a subset of $2^\omega$ that is not null-additive. Recalling Pawlikowski's \cite{paw} characterisation of $\text{non}(\mathcal{NA})$ in terms of slaloms, we now introduce some notation from \cite{carmej}:

\begin{defi}
For a sequence of non-empty sets $b=\langle b(n): n \in \omega\rangle$ and $h \in \omega^\omega$, let
\[\begin{aligned}\prod b&:=\prod_{n \in \omega}b(n),\; \text{and}\;
\mathcal{S}(b,h):=\prod_{n \in \omega}[b(n)]^{\leq h(n)}. \end{aligned}\]	For $x\in \prod b$ and $S \in \mathcal{S}(b,h)$, write $x \in^{*}S$ iff $x(n)\in S(n)$ for all but finitely many $n\in \omega$. Finally, define
\[\mathfrak{b}^{\text{Lc}}_{b,h}:=\min\{|F|: F \subseteq \prod b \land \neg \exists S \in \mathcal{S}(b,h)\; \forall x \in F: x \in^{*}S \}.\]
\end{defi}

Pawlikowski's characterisation and its slight modification from \cite{carmej} state: 

\begin{lem}[\cite{paw}, Lemma 2.2; \cite{carmej}, Lemma 3.9]\label{lem-non(na)char}
For any $h \in \omega^\omega$ that diverges to infinity,
\[\text{non}(\mathcal{NA})=\min\{\mathfrak{b}^{\text{Lc}}_{b,h}: b \in \omega^\omega\}.\]	
\end{lem}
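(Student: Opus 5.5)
The plan is to reduce the statement to Shelah's combinatorial characterisation of null-additive sets (Bartoszy\'nski--Judah, Theorem 2.7.18), which I will cite rather than reprove. It says that $X \subseteq 2^\omega$ is null-additive if and only if the following holds. For every strictly increasing $g \in \omega^\omega$ there is a sequence $\langle I_n : n \in \omega\rangle$ with $I_n \subseteq 2^{[g(n),g(n+1))}$ and $|I_n| \leq n$, such that for every $x \in X$ we have $x|_{[g(n),g(n+1))} \in I_n$ for all but finitely many $n$. With this in hand the argument has two parts: first, show that the right-hand side does not depend on $h$; second, prove the equality for the single choice $h=\text{id}_\omega$.

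\emph{Independence of $h$.} Let $h,h'$ both diverge to infinity and fix $b \in \omega^\omega$. I will choose a partition of $\omega$ into consecutive finite intervals $J_k=[a_k,a_{k+1})$ with $h(n) \geq h'(k)$ for every $n \geq a_k$; this is possible because $h$ diverges. Put $b'(k):=\prod_{n \in J_k} b(n)$, coded as a natural number. Send $x \in \prod b$ to $x' \in \prod b'$ given by $x'(k)=x|_{J_k}$.
\begin{itemize}
\item Suppose $S' \in \mathcal{S}(b',h')$ localizes all $x'$ with $x \in F$. Define $S(n)$, for $n \in J_k$, as the set of $n$-th coordinates of members of $S'(k)$. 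Then $|S(n)| \leq h'(k) \leq h(n)$, and $S$ localizes $F$.
\item Hence a family $F \subseteq \prod b$ that is not $h$-localizable yields a family of the same size in $\prod b'$ that is not $h'$-localizable.
\item This gives $\min_{b'} \mathfrak{b}^{\text{Lc}}_{b',h'} \leq \min_b \mathfrak{b}^{\text{Lc}}_{b,h}$, and the reverse inequality follows by symmetry.
\end{itemize}
This is the same blocking trick as in Fact \ref{fct:bound}.

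\emph{Equality for $h=\text{id}_\omega$.} I will prove the two inequalities separately.
\begin{itemize}
\item For $\text{non}(\mathcal{NA}) \geq \min_b \mathfrak{b}^{\text{Lc}}_{b,\text{id}}$: let $X$ be non-null-additive of size $\text{non}(\mathcal{NA})$, and let $g$ witness the failure of Shelah's criterion. Put $b(n):=2^{g(n+1)-g(n)}$ and map $x \mapsto \langle x|_{[g(n),g(n+1))} : n \in \omega\rangle \in \prod b$. The image is a family of size at most $|X|$ that is not $\text{id}$-localizable.
\item For $\text{non}(\mathcal{NA}) \leq \min_b \mathfrak{b}^{\text{Lc}}_{b,\text{id}}$: given $b$ and a non-localizable $F \subseteq \prod b$, choose $g$ with $2^{g(n+1)-g(n)} \geq b(n)$. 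Code each $x \in F$ as a real in $2^\omega$ whose $n$-th block $[g(n),g(n+1))$ encodes $x(n)$ injectively. If the resulting set were null-additive, Shelah's slalom for this $g$ would decode into an $\text{id}$-slalom localizing $F$, a contradiction.
\end{itemize}
The off-by-one issue of $n$ versus $n+1$ in the slalom sizes is absorbed by the independence of $h$ established above.

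The main obstacle is Shelah's characterisation itself, whose proof is a delicate measure-theoretic argument. I treat it as a black box from the literature, in line with how the paper cites Pawlikowski's lemma. The one remaining point to check is that the $g$ in Shelah's criterion ranges over all strictly increasing functions; this is what makes the two coding directions match up exactly.
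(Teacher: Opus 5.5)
Your argument is correct, but it does not follow the paper's route. The paper gives no proof of its own here: it quotes the cardinal-level slalom characterisation of $\text{non}(\mathcal{NA})$ (Pawlikowski) together with its extension to arbitrary $h$ diverging to infinity (Cardona--Mej\'ia).

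You instead derive the equality from Shelah's set-level characterisation of null-additive sets. Sending $x\in 2^\omega$ to its sequence of $g$-blocks, and conversely coding $\prod b$ blockwise into $2^\omega$, turns Shelah's sequences $I_n$ with $|I_n|\le n$ into slaloms in $\mathcal{S}(b,\text{id})$ and back. Your blocking step, the same trick as in Fact~\ref{fct:bound}, does the job of the Cardona--Mej\'ia modification by showing that $\min_b\mathfrak{b}^{\text{Lc}}_{b,h}$ does not depend on $h$.

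What your route buys is transparency. Once Shelah's theorem is granted, everything reduces to an elementary coding argument, and it is clear why the two sides match. What it costs is assuming considerably more than the lemma needs. Shelah's theorem decides null-additivity for each individual set $X$, whereas the lemma only concerns the least size of a non-null-additive set, which is exactly what the cited cardinal-level result provides.

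Two small remarks.
\begin{itemize}
\item With $a_0=0$, your requirement $h(n)\ge h'(0)$ for all $n\in J_0$ may fail, and $\mathcal{S}(b,h)$ demands $|S(n)|\le h(n)$ at every coordinate. Since localisation is only needed for almost all $n$, simply put $S(n)=\emptyset$ at the finitely many offending coordinates.
\item For $h=\text{id}$ the widths $|I_n|\le n$ and $|S(n)|\le n$ already coincide, so no off-by-one adjustment is needed there.
\end{itemize}
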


\begin{prp}\label{prp:lowerboundnon}
    $\text{non}(\mathcal{NA})\leq\mathfrak{ge}(\text{Laver})$.
\end{prp}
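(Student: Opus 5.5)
The plan is to show directly that $\mathfrak{ge}(\mathcal{I}_f)\geq \text{non}(\mathcal{NA})$ for every $f\in\mathcal{H}$. So fix $f\in\mathcal{H}$ and a filter base $\mathcal{F}\subseteq\mathcal{I}_f^+$ (on $\mathbb{Q}$) with $|\mathcal{F}|<\text{non}(\mathcal{NA})$. The goal is to find a single $I\in\mathcal{I}_f$ with $|I\cap F|=\omega$ for every $F\in\mathcal{F}$. The tool is Lemma \ref{lem-non(na)char}: I will choose a slowly growing $h$ and a finite-valued $b$, code each $F\in\mathcal{F}$ as some $x_F\in\prod b$, and localize all the $x_F$ simultaneously by a slalom $S\in\mathcal{S}(b,h)$. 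The set $I$ will then be read off from $S$.

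\textbf{Choice of parameters.} Choose a strictly increasing sequence $k_0<k_1<\dots$ of levels and an unbounded non-decreasing $h$. They should satisfy $(i+1)\,h(i+1)\big/f(k_i)\to 0$. This is possible because $f$ is unbounded: first fix $h$, then let the $k_i$ grow fast enough. Put $b(i):=2^{k_i}\times 2^{k_{i+1}}$, which is finite.

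\textbf{Coding.} Fix $F\in\mathcal{F}$. Since $F$ is infinite, it has an accumulation point $z_F\in 2^\omega$. Pick pairwise distinct points $a_F(i)\in F$ with $a_F(i)|_{k_i}=z_F|_{k_i}$, and set
\[x_F(i):=\langle z_F|_{k_i},\, a_F(i)|_{k_{i+1}}\rangle .\]
Since $|\mathcal{F}|<\mathfrak{b}^{\text{Lc}}_{b,h}$, there is a slalom $S$ with $x_F\in^* S$ for all $F$. I will first prune $S$ to $S'(i)$, keeping only those pairs $\langle u,v\rangle$ with $v\supseteq u$ whose first coordinate extends the first coordinate of some pair in $S(i-1)$. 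For each fixed $F$ this removes only finitely many of its hits, since $z_F|_{k_i}$ is coherent in $i$.

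\textbf{Definition of $I$ and the two verifications.} The candidate is
\[I:=\{a\in\mathbb{Q}: \exists i\ \langle a|_{k_i},a|_{k_{i+1}}\rangle\in S'(i)\}.\]
This contains the points $a_F(i)$ for cofinitely many $i$, so $I\cap F$ is infinite for each $F$. To get $I\in\mathcal{I}_f$, I must show $\level_I(n)=o(f(n))$. For $n\in[k_j,k_{j+1})$, points witnessed at an index $i\le j+1$ contribute at most $\sum_{i\le j+1}h(i)\le (j+2)h(j+1)$ restrictions. By the choice of $k_i$ this is $o(f(k_j))\le o(f(n))$.

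The main obstacle is the points witnessed at indices $i>j+1$. Their restrictions to $n$ are restrictions of first coordinates in $S'(i)$, and coherence of the pruned slalom is meant to force these restrictions through $S'(j+1)$, giving at most $h(j+1)$ further values. The difficulty is that the pruning is only guaranteed relative to the immediately preceding level. A chain in $S'$ might begin at a large index without passing through the lower levels, and then the union over $i$ would be unbounded. My first attempt to repair this is to make each coordinate encode the whole coherent chain, replacing $z_F|_{k_i}$ by the tuple $\langle z_F|_{k_l}: l\le i\rangle$, and to keep in $S'(i)$ only entries whose chains are entirely contained in $\bigcup_l S(l)$ from some index $m(i)$ on, where $m(i)\to\infty$ slowly and is chosen in advance. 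Each $x_F$ still lies in $S'$ cofinitely often, since its chain is in $S$ from some point on. On the other hand, at level $n\in[k_j,k_{j+1})$ only chains with $m(i)\le j$ contribute new restrictions, and this yields a bound of order $j\cdot h(j+1)$ after re-tuning $k_i$. If this bookkeeping fails, I would instead code $F$ by the accumulation-point branches only, and build $I$ by selecting points of each $F$ inside a thin closed tree (recall that $\mathcal{I}_f$ is closed under closures). This follows the pattern of the proof of Proposition \ref{prp:lowerboundcov}.
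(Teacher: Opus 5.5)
Your overall strategy (code each member of $\mathcal{F}$ as a real in some $\prod b$ and localize all codes at once by Pawlikowski's slalom characterisation) is the same as the paper's. The execution, however, has two genuine gaps.

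\textbf{First gap: the set $I$ is never in $\mathcal{I}_f$.} Membership of $a$ in $I$ is decided by $a|_{k_{i+1}}$ for a single witnessing index $i$. Since $x_F\in^{*}S'$, the set $S'(i)$ is nonempty for cofinitely many $i$. Whenever $\langle u,v\rangle\in S'(i)$, every $a\in[v]\cap\mathbb{Q}$ lies in $I$. Hence $\level_I(n)\geq 2^{n-k_{i+1}}$ for all $n\geq k_{i+1}$, whereas $f(n)\leq n$.

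Your count that indices $i\leq j+1$ contribute at most $\sum_{i\leq j+1}h(i)$ restrictions at level $n\in[k_j,k_{j+1})$ is wrong for $i<j$. Such a witness pins $a$ down only below $k_{i+1}\leq n$. Any repair must control \emph{all} restrictions of a point above some level, i.e.\ use a universal rather than an existential condition. Then the code of $F$ must supply, at every level, the restrictions of all the points of $F$ you intend to capture. A single window $a_F(i)|_{k_{i+1}}$ per index does not do this.

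\textbf{Second gap: no common accumulation point.} The obstacle you flag (chains that start late) is real, and the chain bookkeeping does not remove it. The chains with $m(i)>j$ are exactly the ones left unconstrained at level $k_j$, and since $m(i)\to\infty$ there are infinitely many of them.

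The root cause is that your argument only uses that each $F\in\mathcal{F}$ is infinite, never that $\mathcal{F}$ is a filter base. For arbitrary countable families of infinite sets the conclusion is false. Take $\{[s]\cap\mathbb{Q}: s\in 2^{<\omega}\}$: any set meeting all of these is dense. Concretely, your $I$ contains $a_F(i)\to z_F$, so every $z_F$ lies in the closure of $I$. Closure does not change levels, so independently chosen accumulation points (e.g.\ a dense set of them) are fatal.

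\textbf{The missing idea is a common spine, as in the paper.}
\begin{itemize}
\item Since $\mathcal{F}$ is a filter base, there is a single $\eta\in 2^\omega$ such that $[\eta|_n]\cap X$ is infinite for every $n$ and every $X\in\mathcal{F}$.
\item Fix a non-decreasing unbounded $g$ with $g(n)(g(n)+1)$ small relative to $f(n)$.
\item Choose $y^X_k\in X$ with $y^X_k|_n=\eta|_n$ whenever $g(n)\leq k$.
\item Code $X$ by $y^X(n)=\{y^X_k|_n:k\in\omega\}\in[2^n]^{\leq g(n)+1}$. One code thus tracks infinitely many points of $X$ at every level.
\item Localize all the $y^X$ by a single $S\in\mathcal{S}(b,g)$ with $b(n)=[2^n]^{\leq g(n)+1}$.
\item Put $A=\{x\in\mathbb{Q}:\exists n_0\,(x|_{n_0}=\eta|_{n_0}\wedge\forall n>n_0\; x|_n\in\bigcup S(n))\}$.
\end{itemize}
Late starts are now harmless: every $x\in A$ satisfies $x|_n\in\{\eta|_n\}\cup\bigcup S(n)$, so $\level_A(n)\leq 1+g(n)(g(n)+1)$. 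If $y^X(n)\in S(n)$ for all $n>n_0$, then $y^X_{g(n_0)}\in A\cap X$. Assuming, as one may, that $\mathcal{F}$ contains the cofinite sets, this gives $|A\cap X|=\omega$ for every $X\in\mathcal{F}$.
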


\begin{proof}
Let $f\in \mathcal{H}$ and assume that $\mathcal{F} \subseteq [\mathbb{Q}]^\omega$ is a filter base of cardinality $<\text{non}(\mathcal{NA})$. It suffices to find some $A \subseteq \mathbb{Q}$ with $\level_A\leq f+1$ such that $A$ has non-empty intersection with each member of $\mathcal{F}$, since we may assume that $\mathcal{F}$ contains the cofinite subsets of $\mathbb{Q}$.

Fix some non-decreasing, unbounded $g \in \omega^\omega$ such that $\forall n \in \omega: g(n)(g(n)+1) \leq f(n)$. Let $\eta \in 2^{\omega}$ be such that $[\eta|_n]\cap \mathbb{Q}$ extends $\mathcal{F}$ to a filter base for every $n\in \omega$. For each $X \in \mathcal{F}$ and $k \in \omega$, choose $y_k^{X} \in X$ such that $y_k^{X}|_n=\eta|_n$ for every $n \in \omega$ with $g(n)\leq k$, and define
\[y^{X}(n):=\{y^{X}_k|_n: k \in \omega\}.\]
Since $y^{X}_k|_n=\eta|_n$ for every $k \geq g(n)$, $y^{X}(n)$ has size at most $g(n)+1$.

Define $b(n):=[2^n]^{\leq g(n)+1}$ for each $n \in \omega$. Hence, $y^{X}\in \prod b$ for each $X \in \mathcal{F}$. Since $|\mathcal{F}|<\mathfrak{b}^{\text{Lc}}_{b,g}$, we find $S \in \mathcal{S}(b, g)$ such that $y^{X}\in^{\ast}S$ for every $X \in \mathcal{F}$. Define
\[A:=\{x \in \mathbb{Q}: \exists n_0 \in \omega : x|_{n_0}=\eta|_{n_0} \land \forall n >n_0: x|_n \in \bigcup S(n)\}).\]
Since $|S(n)|\leq g(n)$, we have $|\bigcup S(n)|\leq g(n)(g(n)+1)\leq f(n)$, hence $A$ has levels of size $\leq f+1$. It remains to check that $A$ has nonempty intersection with each $X \in \mathcal{F}$.

Let $n_0 \in \omega$ be such that $\forall n > n_0: y^{X}(n)\in S(n)$. Since $y^{X}_{g(n_0)}|_{n_0}=\eta|_{n_0}$ and $\forall n > n_0: y^{X}_{g(n_0)}|_n \in y^{X}(n) \subseteq \bigcup S(n)$, we see that $y^{X}_{g(n_0)} \in A \cap X$.
\end{proof}

\pagebreak

Considering upper bounds, we begin by observing that $\mathfrak{ge}(\text{Laver})$ is trivially bounded from above by the generic existence number of measure zero ultrafilters, as each $\mathcal{I}_f$ consists of sets with closure of measure zero. Borrowing from Brendle and Fla\v{s}kov\'a \cite{brefla17}, we denote by $\textsf{mz}$ the ideal of sets with closure of measure zero, and by $\mathcal{E}$ the $\sigma$-ideal generated by $\textsf{mz}$. Furthermore, as is standard, $\mathfrak{d}=\min\{|\mathcal{F}|: \forall g \in \omega^\omega \:\exists f \in \mathcal{F}: g \leq^{\ast}f\}$. Brendle \cite{brendle-between} proved the following

\begin{fct}[\cite{brendle-between}, Theorem D]
$\mathfrak{ge}(\textsf{mz})=\max\{\text{non}(\mathcal{E}), \mathfrak{d}\}.$
\end{fct}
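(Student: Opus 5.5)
The plan is to prove the two inequalities $\max\{\text{non}(\mathcal{E}),\mathfrak{d}\}\leq\mathfrak{ge}(\textsf{mz})$ and $\mathfrak{ge}(\textsf{mz})\leq\max\{\text{non}(\mathcal{E}),\mathfrak{d}\}$ separately. Both rest on a level-count description of $\textsf{mz}$ on $\mathbb{Q}$. For $A\subseteq\mathbb{Q}$ the sequence $\level_A(n)2^{-n}$ is non-increasing, since $\level_A(n+1)\leq 2\level_A(n)$, and it converges to the measure of $\cls(A)$. Hence
\[A\in\textsf{mz}\iff \lim_n \level_A(n)2^{-n}=0,\qquad A\in\textsf{mz}^+\iff \inf_n\level_A(n)2^{-n}>0.\]
I first record this, exactly as $\mathcal{I}_f$ was handled in Section 2. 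This turns every membership in $\textsf{mz}$ into a combinatorial condition on levels.

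\textbf{Lower bounds.} The two lower bounds are proved separately; handling $\text{non}(\mathcal{E})$ and $\mathfrak{d}$ at once is what failed before. For $\mathfrak{d}\leq\mathfrak{ge}(\textsf{mz})$, fix a filter base $\mathcal{F}\subseteq\textsf{mz}^+$ with $|\mathcal{F}|<\mathfrak{d}$ and let $\epsilon_X:=\mu(\cls X)>0$ for $X\in\mathcal{F}$. Fix an enumeration of $\mathbb{Q}$. To each $X$ attach $g_X\in\omega^\omega$, where $g_X(n)$ bounds both the index needed to find, among the elements of $X$, representatives of at least $\epsilon_X 2^{n}/2$ nodes of level $n$, and a level $m>n$ at which these representatives already split. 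Take $h$ not dominated by any $g_X$ and let $I$ consist, for every $n$, of a set of representatives chosen only among the first $h(n)$ points. These points are placed in boundedly many cones of level $n$, with all of them below a fixed small node sequence, so that $\level_I(k)2^{-k}\to0$. Since $h(n)>g_X(n)$ for infinitely many $n$, each $X$ contributes a new point to $I$ infinitely often, and so $|I\cap X|=\omega$.

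For $\text{non}(\mathcal{E})\leq\mathfrak{ge}(\textsf{mz})$, fix $\mathcal{F}$ with $|\mathcal{F}|<\text{non}(\mathcal{E})$. To each $X\in\mathcal{F}$ I attach a real $z_X$ coding, in a fixed Borel way, an enumeration of $X$ by levels. A single closed null set $C$ then catches all the $z_X$, since $\{z_X: X\in\mathcal{F}\}\in\mathcal{E}$ and so lies in a countable union of closed null sets, from which one piece is chosen after a diagonal merge. I decode $C$ into $I:=\{x\in\mathbb{Q}: x$ is coded by a node of $C\}$. Closed null means $\level_C(n)2^{-n}\to0$, and this should transfer to $\level_I$, so $I\in\textsf{mz}$ while $I$ meets every $X$ infinitely.

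\textbf{Upper bounds.} For $\mathfrak{ge}(\textsf{mz})\leq\max\{\text{non}(\mathcal{E}),\mathfrak{d}\}$ I reverse both codings. A dominating family $D$ together with a set $Y\notin\mathcal{E}$ of size $\text{non}(\mathcal{E})$ indexes sets $X_{y,d}\in\textsf{mz}^+$, where $X_{y,d}$ is the set of points of $\mathbb{Q}$ whose level-$n$ restriction agrees with $y$ after position $d(n)$. I then take the filter base $\mathcal{F}$ they generate under finite intersections; positivity of these intersections has to be checked. Given $I\in\textsf{mz}$, the speed at which $\level_I(n)2^{-n}\to0$ is dominated by some $d\in D$, and the set of $y$ whose $X_{y,d}$ meets $I$ infinitely is contained in a countable union of closed null sets. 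Hence some $y\in Y$ lies outside it, giving $|I\cap X_{y,d}|<\omega$.

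The main obstacle is choosing the codings so that the finite intersections of the $X_{y,d}$ remain in $\textsf{mz}^+$, while the ``bad'' $y$ for a given $I$ form an $\mathcal{E}$-set and not merely a null set. I would first try $X_{y,d}$ defined through the block partition given by $d$, so that intersections are controlled blockwise. This is exactly where Brendle's argument in \cite{brendle-between} is delicate, and I would follow it there.
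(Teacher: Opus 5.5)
The paper gives no proof of this statement; it is quoted from Brendle~\cite{brendle-between}. Your proposal therefore has to stand on its own, and as written it does not. The overall shape is the natural one: two separate lower bounds, and an upper bound indexed by a dominating family times a non-$\mathcal{E}$ set. The level-count description of $\textsf{mz}$ is also correct. Each of the three substantive steps, however, has a concrete gap.

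\textbf{The $\mathfrak{d}$ lower bound.} Your $I$ is confined to boundedly many cones fixed independently of $X$, while $g_X$ only records where $X$ has many representatives. Nothing forces a point of $X$ into those cones, since a member of $\mathcal{F}$ may miss a given clopen set entirely. So ``each $X$ contributes a new point infinitely often'' is unsupported. The missing idea is the accumulation point already used in the proofs of Propositions~\ref{prp:lowerboundcov} and~\ref{prp:lowerboundnon}:
\begin{itemize}
\item Let $\eta$ be the limit of a non-principal ultrafilter on $\mathbb{Q}$ extending $\mathcal{F}$; then every $X\cap[\eta|_n]$ is infinite.
\item Let $g_X(n)$ be the least index of a point of $X\cap[\eta|_n]\setminus\{\eta\}$, and take $h$ with $h\not\le^* g_X$ for all $X$.
\item Put $I:=\{q_j:\exists n\,(j<h(n)\wedge q_j\in[\eta|_n])\}$.
\end{itemize}
Then $I\setminus[\eta|_m]$ is finite for every $m$, so $\cls(I)\subseteq I\cup\{\eta\}$ is countable, and $I$ meets each $X$ infinitely. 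No measure counting is needed.

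\textbf{The $\text{non}(\mathcal{E})$ lower bound.} The step ``a single closed null set $C$ catches all the $z_X$ after a diagonal merge'' is false. A countable dense set lies in $\mathcal{E}$ but in no closed null set, and keeping one piece of a countable cover loses every $X$ whose code lies in another piece. A merge only yields a set $\{z:\forall^\infty n\; z|_{K_n}\in J_n\}$ with $|J_n|2^{-|K_n|}\to 0$, so your decoding would have to tolerate finitely many bad blocks per code.

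Beyond that, the coding is never specified, and ``this should transfer to $\level_I$'' is exactly the lemma that carries this half of the theorem. You need a level-preserving code whose decoded set satisfies $\level_I(m)2^{-m}\to 0$ and still contains actual points of every $X$. With an enumeration code there is no relation between node counts of $C$ and $\level_I$. Compare the proof of Proposition~\ref{prp:lowerboundnon}: it records restrictions of chosen points of $X$ converging to $\eta$, and decodes to all points of $\mathbb{Q}$ whose tails follow the captured nodes.

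Such an argument must also work without any domination. In the model of Theorem~\ref{thm:lavernop-point} the iteration is $\omega^\omega$-bounding, so $\mathfrak{d}=\omega_1$, while $\mathfrak{ge}(\textsf{mz})\ge\mathfrak{ge}(\text{Laver})=\mathfrak{c}$.

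\textbf{The upper bound.} This part is not carried out. Read literally, $X_{y,d}$ forces $x(i)=y(i)$ at every $i$ with $d(n)\le i<n$ for some $n$. So it has null closure when there are infinitely many such $i$. Otherwise it is one of only countably many relatively clopen sets, and countably many sets cannot witness $\mathfrak{ge}(\textsf{mz})\ge\mathfrak{d}$. Either way these sets do not give a suitable filter base in $\textsf{mz}^+$. The two properties that make up the whole argument are only asserted and then deferred to Brendle:
\begin{itemize}
\item finite intersections of the $X_{y,d}$ lie in $\textsf{mz}^+$;
\item for each $I\in\textsf{mz}$, the set of $y$ for which $X_{y,d}$ meets $I$ infinitely is in $\mathcal{E}$, not merely null.
\end{itemize}
As it stands, the proposal is an outline of where a proof would go rather than a proof.
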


It follows that

\begin{fct}\label{fct:upperbound}
$\mathfrak{ge}(\text{Laver})\leq \max\{\text{non}(\mathcal{E}), \mathfrak{d}\}$.
\end{fct}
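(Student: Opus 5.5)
To prove $\mathfrak{ge}(\text{Laver})\leq \max\{\text{non}(\mathcal{E}), \mathfrak{d}\}$, I will compare $\mathfrak{ge}$ for a single ideal $\mathcal{I}_f$ with $\mathfrak{ge}(\textsf{mz})$, viewing both ideals on the countable set $\mathbb{Q}$. The tool is monotonicity of $\mathfrak{ge}$: if $\mathcal{I}\subseteq \mathcal{J}$ are ideals on $\mathbb{Q}$, then $\mathfrak{ge}(\mathcal{I})\leq \mathfrak{ge}(\mathcal{J})$. Suppose $\mathcal{F}\subseteq \mathcal{J}^+$ is a filter base such that every $J\in\mathcal{J}$ is almost disjoint from some member of $\mathcal{F}$. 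Since $\mathcal{J}^+\subseteq \mathcal{I}^+$, we have $\mathcal{F}\subseteq\mathcal{I}^+$. Since $\mathcal{I}\subseteq\mathcal{J}$, every $I\in\mathcal{I}$ is almost disjoint from some member of $\mathcal{F}$. So the same $\mathcal{F}$ witnesses the definition for $\mathcal{I}$, which gives the inequality.

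The remaining step is to show $\mathcal{I}_f\subseteq \textsf{mz}$ for some (indeed every) $f\in\mathcal{H}$. I would then conclude
\[\mathfrak{ge}(\text{Laver})\leq \mathfrak{ge}(\mathcal{I}_f)\leq \mathfrak{ge}(\textsf{mz})=\max\{\text{non}(\mathcal{E}),\mathfrak{d}\}\]
by Brendle's Theorem D. I take $\textsf{mz}$ here as the ideal on $\mathbb{Q}$ of sets whose closure in $2^\omega$ is null, as in \cite{brefla17}.

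To show the inclusion, let $A\in\mathcal{I}_f$. For each $n$, the closure $\cls(A)$ is covered by the union of the cones $[s]$ with $s\in\{x|_n: x\in A\}$. This union has measure $\level_A(n)\cdot 2^{-n}$. By definition of $\mathcal{I}_f$ (take $d=1$), $\level_A(n)\leq f(n)\leq n$ for almost all $n$. Hence the measure of $\cls(A)$ is at most $n2^{-n}$ for all large $n$, so $\cls(A)$ is null. Alternatively, this follows from Lemma \ref{lemyor}, since $\mathcal{I}_g\subseteq\mathcal{Y}_f$, $\mathcal{Y}_f$ consists of null sets, and $\mathcal{I}_g$ is closed under closures.

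No step is really an obstacle. The only point needing care is making sure Brendle's $\textsf{mz}$ lives on the same countable set. This is handled by the reduction to $\mathbb{Q}$ in Lemma \ref{lem:redefinition}, or simply by noting that closure-null subsets of $\mathbb{Q}$ form exactly the $\textsf{mz}$ ideal used in \cite{brefla17}.
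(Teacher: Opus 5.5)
Your proof is correct and takes the same route as the paper. The paper just observes that each $\mathcal{I}_f$ consists of sets with null closure, hence $\mathfrak{ge}(\text{Laver})\leq\mathfrak{ge}(\textsf{mz})$, and then cites Brendle's Theorem D; you have made explicit the monotonicity of $\mathfrak{ge}$ under inclusion of ideals and the estimate $\level_A(n)\cdot 2^{-n}\leq n2^{-n}$ that the paper leaves implicit.
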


Recall that $\mathcal{SN}$ denotes the ideal of strong measure zero subsets of $2^{\omega}$, and $\text{non}(\mathcal{SN})$ is the smallest cardinality of a set which is not strong measure zero. Miller \cite{Miller81} characterised $\text{non}(\mathcal{SN})$ as the smallest cardinality of a bounded family $\mathcal{D}\subseteq\omega^\omega$ such that for all $g\in\omega^\omega$, there is $f\in\mathcal{D}$ for which the set $\{n\in\omega : g(n)=f(n)\}$ is finite. Using a similar argument to Lemma 10 of Canjar \cite{Canjar-gen} and Proposition 3.18 of Brendle and Fla\v{s}kov\'a \cite{brefla17}, we will obtain $\mathfrak{ge}(\text{Laver})\leq \text{non}(\mathcal{SN})$. In the following, we identify $\mathbb{Q}$ with the set $2^{<\omega}$.

\begin{prp}\label{prop:upperboundnon}
$\mathfrak{ge}(\text{Laver})\leq\text{non}(\mathcal{SN})$.
\end{prp}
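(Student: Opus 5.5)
The plan is to exhibit a single $f\in\mathcal{H}$ and a filter base $\mathcal{F}\subseteq\mathcal{I}_f^+$ with $|\mathcal{F}|=\text{non}(\mathcal{SN})$ such that every $I\in\mathcal{I}_f$ is almost disjoint from some member of $\mathcal{F}$. This gives $\mathfrak{ge}(\mathcal{I}_f)\leq\text{non}(\mathcal{SN})$, and hence the claim, by the definition of $\mathfrak{ge}(\text{Laver})$. I would follow the pattern of Lemma 10 of Canjar and Proposition 3.18 of Brendle and Fla\v{s}kov\'a, and start from Miller's characterisation of $\text{non}(\mathcal{SN})$. I would use it in its standard slalom form: for every $h$ diverging to infinity there is a family $\mathcal{D}\subseteq\prod b$, for some $b\in\omega^\omega$ with $b(n)$ finite, with $|\mathcal{D}|=\text{non}(\mathcal{SN})$, such that no slalom $S\in\prod_n[b(n)]^{\leq h(n)}$ catches every $d\in\mathcal{D}$ on almost all coordinates. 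The form with $h\equiv1$ is the one quoted in the paper; passing to a growing $h$ is routine by grouping coordinates into blocks.

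\textbf{Step 1 (setup).} Fix a fast-growing partition of $\omega$ into intervals $J_n=[\ell_n,\ell_{n+1})$, and identify $b(n)$ with a subset of $2^{J_n}$, so that each $d(n)$ is a finite binary word of length $|J_n|$. Choose $f\in\mathcal{H}$ growing slowly relative to $|J_n|$; for instance $f$ roughly equal to $n$ on $J_n$, which is at most the identity on $J_n$ since $\ell_n\geq n$. Viewing $\mathbb{Q}$ as $2^{<\omega}$, I would encode each $x$ by its sequence of block-words $x|_{J_0},x|_{J_1},\dots$.

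\textbf{Step 2 (positive sets).} For $d\in\mathcal{D}$, let $X_d$ consist of those $x$ whose block-words are constrained by $d$ on every block $J_n$ that $x$ fully traverses. The constraint should be loose: rather than forcing $x|_{J_n}=d(n)$, require $x|_{J_n}$ to lie in a large subset $A_{d(n)}\subseteq 2^{J_n}$ attached to $d(n)$. The sets $A_{d(n)}$ should be chosen so that any $m$ of them, coming from distinct values, still have an intersection of size tending to infinity with $n$. Then finite intersections $X_{d_1}\cap\dots\cap X_{d_m}$ have level functions growing like products of these intersection sizes, which beat $\frac1d f$ for every $d>0$. So every such intersection lies in $\mathcal{I}_f^+$, and $\mathcal{F}$, the closure of $\{X_d:d\in\mathcal{D}\}$ under finite intersections, is a filter base in $\mathcal{I}_f^+$ of size $|\mathcal{D}|$.

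\textbf{Step 3 (almost disjointness).} Given $I\in\mathcal{I}_f$, we have $\level_I\leq^\ast\frac1d f$ for each $d>0$. So on block $J_n$ the elements of $I$ realise at most $o(f(\ell_{n+1}))$ block-words, and these form a slalom $S_I$ of width $h(n)=o(n)$. If $I\cap X_d$ is infinite, then since the elements of $\mathbb{Q}$ traverse arbitrarily many blocks, for almost every $n$ some element of $I$ has its $J_n$-word in $A_{d(n)}$. The aim is to convert this into a slalom $S'_I$, still of small width, that catches $d(n)$ for almost every $n$. Miller's property then yields some $d\in\mathcal{D}$ with $|I\cap X_d|<\omega$.

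\textbf{Main obstacle.} The difficulty lies in the tension between Steps 2 and 3. The sets $A_\cdot$ must be large enough that finite intersections of the $X_d$ stay positive, yet a single word realised by $I$ must lie in only few of them, so that it determines $d(n)$ up to a small set. I would try to take $A_{v}$ for $v\in b(n)$ to be the words avoiding a designated coordinate pattern indexed by $v$, with $b(n)$ small compared with $|J_n|$. Then each word lies outside only a bounded number of the $A_v$, and complementing the roles of ``in'' and ``out'' turns $S_I$ into a slalom of width comparable to $h$. If this balance fails, the fallback is to interleave $\mathcal{D}$ with a dominating-type reduction of blocks, as in Canjar's argument.
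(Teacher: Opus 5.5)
\textbf{There is a gap, and it is fatal rather than cosmetic.} Your overall framing is fine: one $f\in\mathcal{H}$, plus a filter base in $\mathcal{I}_f^+$ of size $\text{non}(\mathcal{SN})$ such that no $I\in\mathcal{I}_f$ meets every member infinitely. The problem is the combinatorial input. The ``slalom form'' you quote has the quantifier flipped.

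\begin{itemize}
\item Miller's characterisation says that no single $u\in\prod b$ (equivalently, no slalom of width $h$) catches every $d\in\mathcal{D}$ \emph{infinitely often}.
\item You use instead that no slalom catches every $d$ on \emph{almost all} coordinates. That is the localization property, which is weaker.
\item By Pawlikowski's Lemma \ref{lem-non(na)char}, for a suitable $b$ the localization property is already witnessed by a family of size $\text{non}(\mathcal{NA})$.
\end{itemize}

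Your Step 1 accommodates any $b$. So Steps 2 and 3, if they could be carried out, would prove $\mathfrak{ge}(\text{Laver})\leq\text{non}(\mathcal{NA})$. That is not a ZFC theorem. In the Cohen model, $\mathfrak{ge}(\text{Laver})=\mathfrak{c}$ by Proposition \ref{prp:lowerboundcov}, while $\text{non}(\mathcal{NA})=\aleph_1$ there, since no single slalom localizes $\aleph_1$ of the Cohen reals. See also the Remark after Corollary \ref{cor:inequalities}. So the tension you single out as the main obstacle cannot be resolved.

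\textbf{Where it breaks concretely: Step 2.} Step 3 needs each word of $2^{J_n}$ to lie in at most $c(n)$ of the sets $A_v$, with $c\leq h$.
\begin{itemize}
\item If $\mathcal{D}$ is to defeat slaloms of width $h\geq c$, then at some block $n$ there are $c(n)+1$ members $d_1,\dots,d_m$ of $\mathcal{D}$ with pairwise distinct values at $n$.
\item No string traversing $J_n$ lies in $X_{d_1}\cap\dots\cap X_{d_m}$. So this intersection is finite, hence belongs to $\mathcal{I}_f$, and your family is not a filter base in $\mathcal{I}_f^+$.
\item To stay inside $\mathcal{I}_f^+$ you must let members of the $X_d$ leave blocks unconstrained. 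Then $|I\cap X_d|=\omega$ in general only yields that $d$ is caught on infinitely many blocks.
\item Your ``complementing'' fix does not help. It only tells you that $d(n)$ avoids a small set, which is evasion, not localization.
\end{itemize}

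\textbf{What the paper does instead.} It accepts the local constraint and uses the infinitely-often form directly.
\begin{itemize}
\item Split $\omega$ into intervals $I_n$ of size $n^2$, and cut each into $n$ subintervals $J^n_j$ of size $n$.
\item Let $A_d$ consist of the codes $\varphi_n(t)\in 2^{k_n}$ of those $t\in\prod_{i\in I_n}b(i)$ that agree with $d$ on at least one $J^n_j$.
\item Any $m\leq n$ members of $\mathcal{D}$ can be matched on different subintervals, so these sets form a filter base.
\item Let $g(k_n)=n-1$. A set $A$ with levels $\leq g+1$ that preserves the filter property has at most $n$ elements $s_0,\dots,s_{n-1}$ in $2^{k_n}$.
\item Let $u$ copy $\varphi_n^{-1}(s_i)$ at the $i$-th point of every $J^n_j$. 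This gives one $u$ that agrees with every $d\in\mathcal{D}$ infinitely often, which is exactly what Miller's characterisation rules out for a witness family.
\end{itemize}
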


\begin{proof}
Let $\kappa<\mathfrak{ge}(\text{Laver})$ be a cardinal and $h\in\omega^\omega$. Let $\mathcal{D}\subseteq \prod h$ be a family of size $\kappa$. We will find some $u\in\prod h$ such that the set $\{n\in \omega: h(n)=f(n)\}$ is infinite for every $f\in \mathcal{D}$, which shows that $\kappa < \text{non}(\mathcal{SN})$.

Partition $\omega$ into intervals $I_n$, each of size $n^2$, and partition each $I_n$ into intervals $J^n_i$, $i<n$, each of size $n$. Fix some strictly increasing sequence $\langle k_n: n \in \omega \rangle$ such that $\forall n \in \omega: 2^{k_n}\geq |\prod_{i\in I_n} h(i)|$, and let $\varphi_n : \prod_{i\in I_n} h(i) \to 2^{k_n}$ be an injection.

For each $f\in \mathcal{D}$, define
\[A_f=\{\varphi_n(t) \in 2^{k_n} : n\in\omega \land \; t\in\dom(\varphi_n) \land \; \exists j<n \; (t|_{J^n_j}=f|_{J^n_j})\}.\]
Observe that the family $\mathcal{F}=\{A_f : f\in \mathcal{D}\}$ is a filter base of cardinality $\kappa$. Find a non-decreasing unbounded function $g\in\omega^\omega$ such that $g(k_0)=0$ and $g(k_n)=n-1$ for all $n>0$. Since $\kappa<\mathfrak{ge}(\text{Laver})$, we find $A\in [\mathbb{Q}]^\omega$, with levels of size $\leq g+1$, such that $\mathcal{F}\cup\{A\}$ still generates a filter.

For each $n\in\omega$, list $A\cap 2^{k_n}$ as $\{s^{k_n}_i: i < n\}$ (allowing repetitions). Define $u \in\prod h$ such that $u$ agrees with $\varphi^{-1}_n(s^{k_n}_i)$ on the $i$'th element of $J^n_j$ for all $j<n$ (define $\varphi^{-1}_n(s^{k_n}_i)$ arbitrarily if $s^{k_n}_i\notin \ran(\varphi_n)$). We claim that for every $f\in\mathcal{D}$, there are infinitely many $k$ such that $u(k)=f(k)$.

Indeed, let $f\in\mathcal{D}$ and find $\varphi_n(t)\in A_f\cap A$ for some $n\in\omega$ and $t\in\prod_{i\in I_n} h(i)$. Say $\varphi_n(t)$ is the $i$'th element in our list of $A\cap 2^{k_n}$. Then, by definition, $u$ agrees with $t$ on the $i$'th member of $J^n_j$ for every $j<n$. Since $t|_{J^n_j}=f|_{J^n_j}$ for some $j<n$, it follows that $u$ agrees with $f$ on some point in the interval $I_n$. As $A_f\cap A$ is infinite, we are done.
\end{proof}

Observe that a minor rephrasing of the preceding proof actually shows that the generic existence of \emph{rapid} ultrafilters implies $\text{non}(\mathcal{SN})=\mathfrak{c}$. This is because we only require the above set $A$ to intersect each of the finite sets $\prod_{i \in I_n}h(i)$ in at most $n$ points. Such an $A$ can clearly be obtained if the filter base $\mathcal{F}$ is extendable to a rapid ultrafilter.

Combining Propositions \ref{prp:lowerboundcov}, \ref{prp:lowerboundnon} and \ref{prop:upperboundnon}, as well as Fact \ref{fct:upperbound}, we have

\begin{cor}\label{cor:inequalities}
\[\text{cov}(\mathcal{M}),\text{non}(\mathcal{NA})\leq\mathfrak{ge}(\text{Laver})\leq\text{non}(\mathcal{SN}),\max\{\text{non}(\mathcal{E}), \mathfrak{d}\}.\]
\end{cor}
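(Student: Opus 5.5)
This corollary is a bookkeeping step: each of its four inequalities has already been established in this section, so the plan is to cite them in turn and check that their hypotheses agree.

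First, the two lower bounds. Proposition \ref{prp:lowerboundcov} gives $\text{cov}(\mathcal{M})\leq\mathfrak{ge}(\mathcal{I}_f)$ for every $f\in\mathcal{H}$. The argument there fixes an arbitrary $f\in\mathcal{H}$ and extends any small filter base by a set $A$ with $\level_A\leq f+1$. Taking the minimum over $f\in\mathcal{H}$ yields $\text{cov}(\mathcal{M})\leq\mathfrak{ge}(\text{Laver})$. Proposition \ref{prp:lowerboundnon} gives $\text{non}(\mathcal{NA})\leq\mathfrak{ge}(\text{Laver})$ in exactly the same way, using Pawlikowski's slalom characterisation (Lemma \ref{lem-non(na)char}).

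Second, the two upper bounds. Fact \ref{fct:upperbound} gives $\mathfrak{ge}(\text{Laver})\leq\max\{\text{non}(\mathcal{E}),\mathfrak{d}\}$. This follows from Brendle's computation $\mathfrak{ge}(\textsf{mz})=\max\{\text{non}(\mathcal{E}),\mathfrak{d}\}$ together with $\mathcal{I}_f\subseteq\textsf{mz}$: a witness family for $\mathfrak{ge}(\textsf{mz})$ is $\textsf{mz}$-positive, hence $\mathcal{I}_f$-positive, and it is almost disjoint from every element of $\textsf{mz}$, hence from every element of $\mathcal{I}_f$. Proposition \ref{prop:upperboundnon} gives $\mathfrak{ge}(\text{Laver})\leq\text{non}(\mathcal{SN})$ via Miller's characterisation of $\text{non}(\mathcal{SN})$.

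Combining these four inequalities gives the stated chain. The only point needing care is that all four results concern the same quantity. The propositions work with the ideals $\mathcal{I}_f$ on $\mathbb{Q}$, while some earlier material works on $2^\omega$. Lemma \ref{lem:redefinition} and the lemma following it show these two versions are equivalent, so there is no genuine obstacle.
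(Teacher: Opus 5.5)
Your proposal is correct and follows the paper's proof, which obtains the corollary by combining Propositions \ref{prp:lowerboundcov}, \ref{prp:lowerboundnon}, \ref{prop:upperboundnon} and Fact \ref{fct:upperbound}; your justification of Fact \ref{fct:upperbound} via $\mathcal{I}_f\subseteq\textsf{mz}$ and monotonicity of $\mathfrak{ge}$ is also the paper's reasoning. One small nit in your description of the lower bounds: a set with $\level_A\leq f+1$ need not lie in $\mathcal{I}_f$, so to bound $\mathfrak{ge}(\mathcal{I}_f)$ you should run the construction for a slower $g\in\mathcal{H}$ with $g+1\leq^{\ast}\frac{1}{d}f$ for all $d>0$, as is done in the proof of Proposition \ref{prp:baumgartnerstyle}.
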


As there are no rapid filters in the Mathias-, Laver- and Miller models, Laver ultrafilters do not exist in them as well. Shortly, we will see that they do not exist in the Silver model either. For three of the remaining classical models, we have the following:
\begin{cor}
Laver ultrafilters generically exist in the Cohen model, and do not generically exist in the random- and Sacks models\footnote{Note, however, that they do \emph{exist} in the Sacks model, as Ramsey ultrafilters exist there.}.
\end{cor}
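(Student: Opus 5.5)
The plan is to read off all three statements from Corollary \ref{cor:inequalities} and the known values of the relevant cardinal characteristics in each model, recalling that generic existence is equivalent to $\mathfrak{ge}(\text{Laver})=\mathfrak{c}$.

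\emph{Cohen model.} Adding $\kappa\geq\omega_2$ Cohen reals to a model of \textsf{CH} yields $\text{cov}(\mathcal{M})=\mathfrak{c}=\kappa$. Since $\text{cov}(\mathcal{M})\leq\mathfrak{ge}(\text{Laver})$ by Proposition \ref{prp:lowerboundcov}, and $\mathfrak{ge}(\text{Laver})\leq\mathfrak{c}$ trivially, we get $\mathfrak{ge}(\text{Laver})=\mathfrak{c}$. Hence Laver ultrafilters exist generically there. This case needs no further work.

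\emph{Random model.} It suffices to show that one of the upper bounds is $<\mathfrak{c}$. The natural candidate is $\max\{\text{non}(\mathcal{E}),\mathfrak{d}\}$ from Fact \ref{fct:upperbound}. In the random model $\mathfrak{d}=\omega_1$, since random forcing is $\omega^\omega$-bounding. Also $\text{non}(\mathcal{M})=\omega_1$, because the ground-model reals form a nonmeager set of size $\omega_1$. Using the known inequality $\text{non}(\mathcal{E})\leq\text{non}(\mathcal{M})$ (every meager set is covered by one in... — more precisely, $\mathcal{E}\subseteq\mathcal{M}\cap\mathcal{N}$, so a set not in $\mathcal{M}$ is not in $\mathcal{E}$), we get $\text{non}(\mathcal{E})=\omega_1$. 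Therefore $\mathfrak{ge}(\text{Laver})\leq\omega_1<\mathfrak{c}$. Alternatively, $\text{non}(\mathcal{SN})\leq\text{non}(\mathcal{N})$, but $\text{non}(\mathcal{N})=\mathfrak{c}$ in the random model, so this bound does not help; the $\max\{\text{non}(\mathcal{E}),\mathfrak{d}\}$ bound is the one to use.

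\emph{Sacks model.} Here $\mathfrak{c}=\omega_2$, and every cardinal characteristic of the Cichoń type is $\omega_1$. In particular $\mathfrak{d}=\omega_1$, since Sacks forcing is $\omega^\omega$-bounding and this is preserved by countable support iteration. Also $\text{non}(\mathcal{M})=\omega_1$, since the ground-model reals remain nonmeager. As above, $\text{non}(\mathcal{E})\leq\text{non}(\mathcal{M})=\omega_1$. So $\mathfrak{ge}(\text{Laver})=\omega_1<\mathfrak{c}$. One could equally use $\text{non}(\mathcal{SN})=\omega_1$ there, via the Sacks property, but the $\mathfrak{d}$/$\text{non}(\mathcal{E})$ route is cleaner.

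The main point to verify is the inequality $\text{non}(\mathcal{E})\leq\min\{\text{non}(\mathcal{M}),\text{non}(\mathcal{N})\}$. It follows because each closed measure-zero set is closed nowhere dense, hence in both $\mathcal{M}$ and $\mathcal{N}$, and both are $\sigma$-ideals. So $\mathcal{E}\subseteq\mathcal{M}\cap\mathcal{N}$, and any set outside $\mathcal{M}$ lies outside $\mathcal{E}$. Everything else is citing standard values of $\mathfrak{d}$ and $\text{non}(\mathcal{M})$ in these models.
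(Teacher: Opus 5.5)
Your Cohen case is exactly the paper's argument. Your Sacks case is also fine: the paper uses $\text{non}(\mathcal{SN})\leq\text{non}(\mathcal{N})=\omega_1$ there, but $\max\{\text{non}(\mathcal{E}),\mathfrak{d}\}=\omega_1$ works equally well.

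The random-model case, however, rests on a false claim: in the random model $\text{non}(\mathcal{M})=\mathfrak{c}$, not $\omega_1$. This follows from Rothberger's inequality $\text{cov}(\mathcal{N})\leq\text{non}(\mathcal{M})$ together with $\text{cov}(\mathcal{N})=\mathfrak{c}$ in that model. Concretely, a single random real $r$ over $\mathbf{V}$ already makes $2^\omega\cap\mathbf{V}$ meager. Take any ground-model interval partition $\langle I_n : n\in\omega\rangle$ with $\sum_n 2^{-|I_n|}<\infty$. By Borel--Cantelli, every ground-model $y$ lies in the meager set $\{z\in 2^\omega:\forall^\infty n\; z|_{I_n}\neq r|_{I_n}\}$.

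You have swapped meager and null between the Cohen and random models. It is $\text{non}(\mathcal{N})$ that equals $\omega_1$ in the random model: the first $\omega_1$ generic random reals form a non-null set, since any null set in the extension is coded using only countably many coordinates. So your assertion that $\text{non}(\mathcal{N})=\mathfrak{c}$ there, which you used to discard the $\text{non}(\mathcal{SN})$ bound, is also wrong.

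The repair is immediate. You can argue as the paper does, $\mathfrak{ge}(\text{Laver})\leq\text{non}(\mathcal{SN})\leq\text{non}(\mathcal{N})=\omega_1<\mathfrak{c}$, which handles the random and Sacks models uniformly. Alternatively, keep your bound $\max\{\text{non}(\mathcal{E}),\mathfrak{d}\}$, but obtain $\text{non}(\mathcal{E})=\omega_1$ from $\mathcal{E}\subseteq\mathcal{N}$ (which you already noted) rather than from $\mathcal{E}\subseteq\mathcal{M}$. Combined with $\mathfrak{d}=\omega_1$ (random forcing is $\omega^\omega$-bounding), this again gives $\mathfrak{ge}(\text{Laver})=\omega_1<\mathfrak{c}$ in the random model.
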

\begin{proof}
As $\text{cov}(\mathcal{M})=\mathfrak{c}$ in the Cohen model, and $\text{non}(\mathcal{SN})\leq\text{non}(\mathcal{N})=\omega_1$ in the random- and Sacks models.
\end{proof}

\begin{rmk*}
Before proceeding to our next results, let us remark that none of the inequalities in Corollary \ref{cor:inequalities} are tight:
\begin{enumerate}[label=(\roman*)]
    \item  Cardona, Mej\'ia and Rivera-Madrid \cite[Theorem D and Theorem E)]{CARDONA_MEJÍA_RIVERA-MADRID_2025} constructed models in which $\text{non}(\mathcal{NA})<\text{cov}(\mathcal{M})$ holds, hence $\text{non}(\mathcal{NA})\leq\mathfrak{ge}(\text{Laver})$ is consistent.
    \item In our model of (no $P$-points) $+$ (generic existence of Laver ultrafilters), given in Theorem \ref{thm:lavernop-point} below, we have $\text{cov}(\mathcal{M})<\text{non}(\mathcal{NA})=\mathfrak{ge}(\text{Laver})=\mathfrak{c}$.\footnote{Of course, simply separating these cardinal characteristics does not necessitate the non-existence of $P$-points, since the partial order used in Theorem \ref{thm:lavernop-point} to force $\text{non}(\mathcal{NA})=\mathfrak{c}$ is $\omega^\omega$-bounding.}
    \item In the remainder of this section, we will see that there are no Laver ultrafilters in the Silver model. The same argument can be adapted to show that there are no Laver ultrafilters in the model obtained by a length $\omega_2$ iteration of the \emph{bounded-below-$2^n$ Silver forcing} over a model of \textsf{CH}. However, bounded Silver forcing makes the ground reals strong measure zero; therefore, $\mathfrak{ge}(\text{Laver})<\text{non}(\mathcal{SN})=\mathfrak{c}$ holds in this model (see, e.g., 7.4.C in \cite{bj95} for the definition of this forcing and a proof of this fact).
    \item Since there are no Laver ultrafilters in the Mathias model, it satisfies $\mathfrak{ge}(\text{Laver})<\text{non}(\mathcal{E})=\mathfrak{d}=\mathfrak{c}$ (see, e.g., Lemma 7.4.2 in \cite{bj95}).
\end{enumerate}
\end{rmk*}

It turns out that a very similar argument to the one of Chodounsk\'y and Guzm\'an \cite{chodguz-ppoints} shows that Laver ultrafilters do not exist in the Silver model. We will sketch the argument below, and the interested reader can find the full proof in \cite{chodguz-ppoints}.

Let us denote Silver forcing by $\mathbb{SI}$, which consists of partial functions $p:\dom(p)\to 2$ with $\dom(p)\subseteq \omega$ co-countable, ordered by containment. Recall that Silver forcing is proper and has the \emph{Sacks property}, where a forcing notion is said to have the Sacks property if it has the Laver property and does not add unbounded reals. In the following, we will borrow the notation of \cite{chodguz-ppoints}.

\begin{thm}\label{silverthm}
Assume that $\mathbf{V}\vDash \textsf{CH}$ and let $\mathbb{P}_{\omega_2}=\langle \mathbb{P}_{\alpha}, \undertilde{\mathbb{Q}}_{\beta} : \alpha\leq\omega_2, \beta<\omega_2\rangle$ be the countable support iteration of Silver forcing, where $\mathbb{P}_{\alpha}\Vdash \undertilde{\mathbb{Q}}_{\alpha}=\undertilde{\mathbb{SI}}$ for all $\alpha<\omega_2$. Let $G\subseteq \mathbb{P}_{\omega_2}$ be generic. Then there are no Laver ultrafilters in $\mathbf{V}[G]$.
\end{thm}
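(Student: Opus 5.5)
I will follow the Chodounsk\'y--Guzm\'an strategy for $P$-points and adapt its final combinatorial step to Definition \ref{def:laver}. Suppose towards a contradiction that $\undertilde{\mathcal{U}}$ is a $\mathbb{P}_{\omega_2}$-name for a Laver ultrafilter in $\mathbf{V}[G]$.

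\textbf{Reflection.} By \textsf{CH} in $\mathbf{V}$, the $\aleph_2$-c.c.\ and properness of $\mathbb{P}_{\omega_2}$, a standard reflection argument gives a stationary set of $\alpha<\omega_2$ of cofinality $\omega_1$ with the following property: $\mathcal{U}_\alpha:=\undertilde{\mathcal{U}}\cap\mathbf{V}[G_\alpha]$ belongs to $\mathbf{V}[G_\alpha]$ and is a Laver ultrafilter there. Fix such an $\alpha$. Each sequence of finite partitions lies in some intermediate model, so the Laver witnesses can be taken inside $\mathbf{V}[G_\alpha]$.

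\textbf{The Silver real and the partitions.} Work in $\mathbf{V}[G_\alpha]$ and let $r\in 2^\omega$ be the Silver real added at stage $\alpha$. As in \cite{chodguz-ppoints}, fix an interval partition $\langle I_n:n\in\omega\rangle$ of $\omega$ with $|I_n|$ growing fast. Define, in $\mathbf{V}[G_{\alpha+1}]$, a sequence of finite partitions of $\omega$ from $r$ and a bijection between $\omega$ and $2^{<\omega}$ (equivalently, via Lemma \ref{lem:Ff}, a map $F:\omega\to 2^\omega$ built from $r$). The intended outcome is that for every ground-model set $x\in[\omega]^\omega\cap\mathbf{V}[G_\alpha]$, and in particular every $x\in\mathcal{U}_\alpha$, the set $F[x]$ has levels of size at least $2^{n}$ along an infinite set of $n$. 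Any $y\in\undertilde{\mathcal{U}}$ witnessing the Laver property for this $F$ must then contain elements splitting few ways. The key point from \cite{chodguz-ppoints} is that the remainder of the iteration, $\mathbb{P}_{\omega_2}/G_{\alpha+1}$, has the Sacks property and is proper. Hence every such $y$ in the final model is covered by a slalom-like object from $\mathbf{V}[G_{\alpha+1}]$, and its interaction with the Silver real can be analysed already at stage $\alpha+1$.

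\textbf{Main obstacle: the fusion argument.} I expect the hardest step to be this. Given a condition $p$ (whose stage-$\alpha$ coordinate is a Silver condition with co-infinite free domain) and a name $\undertilde{y}$ for an element of $\undertilde{\mathcal{U}}$, I must find a stronger condition and some $x\in\mathcal{U}_\alpha$ such that either $\undertilde{y}\cap x$ is forced to be finite, contradicting that $\undertilde{\mathcal{U}}$ extends $\mathcal{U}_\alpha$, or $\undertilde{y}$ meets more than $n+1$ pieces of the $n$-th partition for infinitely many $n$.

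The plan is to do a fusion over the free coordinates of the Silver condition. Using the Sacks property, I would trap $\undertilde{y}\cap I_n$ in a ground-model set of size at most $n+1$ possibilities. Since the free coordinates allow both values of $r$, each possibility must survive both values, and this doubles the number of partition pieces hit. To make $x$ avoid the bad possibilities, I would invoke the Laver property of $\mathcal{U}_\alpha$ in $\mathbf{V}[G_\alpha]$, applied to the partitions induced by these finitely many possibilities, exactly where \cite{chodguz-ppoints} used the $P$-point property. The delicate part is the counting: keeping the $n+1$ bound from Definition \ref{def:laver}, with Fact \ref{fct:bound} allowing adjustment of the bound function, below the $2^{n}$ splitting forced by the free coordinates of the Silver condition.
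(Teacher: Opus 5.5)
Your outer reduction matches the paper's. You reflect to a stage $\alpha$ of cofinality $\omega_1$ at which $\mathcal{U}\cap\mathbf{V}[G_\alpha]$ is an ultrafilter in $\mathbf{V}[G_\alpha]$, note that the rest of the iteration has the Sacks property, and reduce to a one-step statement about $\mathbb{SI}*\mathbb{R}$ with $\mathbb{R}$ Sacks (Proposition \ref{prp:silver}). But that one-step statement is the whole content of the theorem, and you do not prove it. You never define the partitions, and the mechanism you sketch is not one that works.

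The paper takes $\mathcal{D}^n_i=\bigcup\{I_j(s): j\equiv_{m(n)} i\}$ for $i<m(n)$. A number is sorted by the residue mod $m(n)$ of the index of the $s$-interval containing it. This choice is essential. Changing free coordinates of a Silver condition below a block shifts the indices of all $s$-intervals in that block by the number of $1$'s added, i.e.\ it permutes the pieces cyclically. That is what lets one steer ground-model numbers into or out of prescribed pieces. Your heuristic that each possibility ``must survive both values of $r$'', doubling the pieces hit, corresponds to nothing in the argument, since flipping one coordinate only shifts indices by one. The ``delicate counting'' you postpone is exactly the missing idea.

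Concretely, the following is needed.
\begin{enumerate}[label=(\arabic*)]
\item Apply the Sacks property to the name $h(n)\in[m(n)]^n$ for the set of piece-indices met by $Z$, not to $Z\cap I_n$, since the pieces depend on $s$. This yields a ground-model $P(n)\in[m(n)]^{n^2}$.
\item Choose blocks $J_i$ with enough free coordinates. Pass to $p_1$, which decides the odd blocks and leaves exactly $m(n)$ free coordinates in $J_{2n}$.
\item Form the ground-model sets $C_n=P(n)-_{m(n)}\{0,\dots,v(n)+1\}$ and $H_n=J_{2n+1}\cap\bigcup\{I_j(p_1):j\in_{m(n)}C_n\}$.
\item Use the ultrafilter only to choose between complementary ground-model sets. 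If $\bigcup_n(J_{2n+1}\setminus H_n)\in\mathcal{U}$, set all but one free coordinate of each even block to $0$. The shift before $J_{2n+1}$ is then at most $n+1$, which traps $Z\cap J_{2n+1}$ inside $H_n$. If $\bigcup_nH_n\in\mathcal{U}$, use the defining property of $k$ (whence $m(n)=k((n+1)(v(n)+2))$). Fix the free coordinates of $J_{2n}$ so as to realise a shift $t$ with $C_n\cap(C_n-_{m(n)}t)=\emptyset$; this forces $Z$ to miss $\bigcup_nH_n$. This is the case the paper defers to \cite{chodguz-ppoints}.
\end{enumerate}

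In particular, no Laver property of $\mathcal{U}\cap\mathbf{V}[G_\alpha]$ is used. Likewise, Chodounsk\'y--Guzm\'an use no $P$-point property of the intermediate ultrafilter, which is arbitrary. Your plan to invoke Laverness of $\mathcal{U}_\alpha$ ``exactly where \cite{chodguz-ppoints} used the $P$-point property'' rests on a misreading. The corresponding clause in your reflection step is superfluous.
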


By well-known theorems of Shelah (see, e.g., Chapter VI of \cite{she}), any countable support iteration of Silver forcing also has the Sacks property. Also, by classical arguments, for any ultrafilter $\mathcal{U}\in \mathbf{V}[G]$, there are stationarily many $\alpha<\omega_2$ with uncountable cofinality such that $\mathcal{U}\cap \mathbf{V}[G_{\alpha}]\in \mathbf{V}[G_{\alpha}]$ is an ultrafilter, where $G_{\alpha}$ is the corresponding generic for $\mathbb{P}_{\alpha}$. Therefore, to prove Theorem \ref{silverthm}, it will suffice to prove the upcoming proposition, but first we recall some definitions from \cite{chodguz-ppoints}.
\begin{defi}
\begin{enumerate}[label=(\roman*)]
\item For a possibly partial function $p:\omega \to 2$ and $n\in\omega$, define $I_n(p)=\{k\in\omega : |k\cap p^{-1}(1)|=n\}$ and $\mathcal{I}(p)=\{I_n(p) : n\in\omega\}$, the corresponding interval partition. For such (partial) functions $p$, $\dom(p)$ denotes the domain of $p$ and $\operatorname{cod}(p)$ denotes the complement of $\dom(p)$ in $\omega$.
\item For positive integers $n$, $\equiv_n$ denotes congruence modulo $n$ and $-_n$ denotes subtraction modulo $n$. For $m\in\omega$ and $A\subseteq \omega$, $m\in_n A$ means $A$ contains a natural number congruent to $m$ modulo $n$.
\item For $n\in\omega$, $k(n)\in\omega$ denotes the least natural number $k$ such that for every $C\in[k]^{n^2}$, there is $s\in k$ such that $C\cap (C-_{k} \{s\})=\varnothing$.
\item Define $v,m\in\omega^\omega$ as follows. $v(0)=0$ and $m(0)=k(2)$. Assuming that $v(n-1)$ and $m(n-1)$ are defined, put $v(n)=\sum_{i<n} m(i)$ and $m(n)=k((n+1)(v(n)+2))$.
\end{enumerate}
\end{defi}

\begin{prp}\label{prp:silver}
Suppose $\mathcal{U}$ is an ultrafilter, and $\undertilde{\mathbb{R}}$ is an $\mathbb{SI}$-name for a forcing poset with the Sacks property. Then $\mathcal{U}$ cannot be extended to a Laver ultrafilter in any $(\mathbb{SI}*\undertilde{\mathbb{R}})$-generic extension.
\end{prp}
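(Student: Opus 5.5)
The plan is to adapt the Chodounsk\'y--Guzm\'an argument for $P$-points. The combinatorial Definition \ref{def:laver} replaces the pseudo-intersection requirement: in place of a sequence of $\mathcal{U}$-large sets with no pseudo-intersection, I would build from the Silver generic real a sequence of finite partitions $\langle \mathcal{P}_n : n\in\omega\rangle$ of $\omega$ that no set in an ultrafilter extending $\mathcal{U}$ can meet in only few pieces. By Fact \ref{fct:bound}, it suffices to defeat the bound $h(n)+1$ for one conveniently chosen slow $h$ rather than $n+1$. The functions $v$, $m$ and $k$ from the definition above are tailored to exactly this kind of bound.

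\textbf{Step 1 (the partitions).} Let $r\in 2^\omega$ be the $\mathbb{SI}$-generic real. Using the interval partition $\mathcal{I}(r)=\{I_j(r):j\in\omega\}$, let $\mathcal{P}_n$ be the finite partition of $\omega$ that puts $i\in I_j(r)$ into the piece indexed by $j \bmod m(n)$. Collapse all $i$ lying in the first $v(n)$ intervals into one further piece, so that the partitions become coherent across $n$.

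\textbf{Step 2 (Sacks property).} Suppose toward a contradiction that $p\in\mathbb{SI}$ forces that some $x\in \mathcal{W}\supseteq\mathcal{U}$ meets at most $h(n)+1$ pieces of each $\mathcal{P}_n$. The set of residues met by $x$ at level $n$ is a name for an element of a finite product. By the Sacks property of $\mathbb{SI}*\undertilde{\mathbb{R}}$, I would obtain a ground-model slalom $C(n)\subseteq m(n)$ of size at most $n^2$ covering it. The sizes are arranged through $v(n)$ and $m(n)=k((n+1)(v(n)+2))$ so that the resulting finite sets fit the hypothesis of the definition of $k$.

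\textbf{Step 3 (fusion and shift).} This is the main obstacle. Using a fusion argument below $p$, I would find $q\le p$ and infinitely many coding points $c_n\in\operatorname{cod}(q)$. Flipping the Silver bit at $c_n$ shifts all interval indices beyond $c_n$ by one, so the shifts $0,1,\dots$ are realised by suitable finite changes to $q$. By the choice of $k$, there is $s<m(n)$ with $C(n)\cap (C(n)-_{m(n)}\{s\})=\varnothing$. Choose extensions $q_0$ and $q_1$ of $q$ that agree everywhere except that they differ by the shift $s$ on the relevant interval indices. Because $x$ and the trace of $\mathcal{W}$ on the ground model are given by names, I would then produce, in the ground model, the set
\[Y=\{i: \text{the residue of } i \text{ under } q_0 \text{ lies in } C(n)\text{ for the appropriate } n\}.\]
Either $Y$ or its complement lies in $\mathcal{U}$. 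Under the shifted condition, $x$ would have to be almost contained in both $Y$ and its $s$-shift, which are disjoint. This contradicts the fact that $\mathcal{W}$ extends $\mathcal{U}$ and is closed under intersections.

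The delicate part is doing this simultaneously for all $n$ while keeping the two alternatives decided within the ground model. I expect this to follow the proof of Chodounsk\'y and Guzm\'an verbatim, with residues mod $m(n)$ replacing their sets $A_n$, and I would follow their fusion bookkeeping there.
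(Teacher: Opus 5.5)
Your outline has the same architecture as the paper: residue classes of $\mathcal{I}(s)$ modulo $m(n)$, the Sacks property to obtain a ground-model slalom of size $n^2$, a dichotomy on whether $\mathcal U$ contains a ground-model set or its complement, and the shift property of $k$. However, Step 3 carries the whole proof, you defer it to Chodounsk\'y--Guzm\'an, and what you do write there does not work.

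\textbf{First gap: residues are never determined.} No Silver condition decides the residues on the blocks that matter. Below each such block there remain open coding points, unboundedly many as $n$ grows, and each may turn into a $1$. So ``the residue of $i$ under $q_0$'' is known only up to an additive error in $\{0,\dots,U_n\}$. Consequently $q_0$ forces $x$ only into the set of $i$ whose $q_0$-residue lies in $C(n)-_{m(n)}\{0,\dots,U_n\}$. The property $C(n)\cap(C(n)-_{m(n)}s)=\varnothing$ that you extract from $k$ gives no disjointness for these thickened sets; for instance $C(n)=\{0,5\}$, $s=4$, $U_n=1$ fails. The paper thickens \emph{before} invoking $k$. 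It sets $C_n=P(n)-_{m(n)}(v(n)+2)$, of size at most $n^2(v(n)+2)$, and applies the property of $k$ to $C_n$ at $(n+1)(v(n)+2)$. This thickening, not the size $n^2$ of the slalom, is what $m(n)=k((n+1)(v(n)+2))$ is for.

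\textbf{Second gap: no mechanism for controlling the shifts.} The needed shift $t_n$ depends on $n$, and shifts accumulate across blocks, so finitely many flips of a single $q$ cannot realise it for all $n$. The paper's construction is as follows.
\begin{itemize}
\item Fix an interval partition $\{J_i\}$ with more than $m(n)$ coding points of $p$ in each of $J_{2n}$ and $J_{2n+1}$, and assume $W=\bigcup_n J_{2n+1}\in\mathcal U$.
\item Pass to $p_1\le p$ that decides every $J_{2n+1}$ and leaves exactly $m(n)$ coding points in $J_{2n}$, so that $v(n)$ coding points lie below $J_{2n}$.
\item Put $H_n=J_{2n+1}\cap\bigcup\{I_j(p_1): j\in_{m(n)}C_n\}$.
\end{itemize}
If $W\setminus\bigcup_n H_n\in\mathcal U$, one keeps a single coding point in each $J_{2n}$ and changes nothing else. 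The error below $J_{2n+1}$ is then at most $n+1\le v(n)+1$, which forces $Z\cap J_{2n+1}\subseteq H_n$. In the remaining case $\bigcup_n H_n\in\mathcal U$, which the paper defers to Chodounsk\'y--Guzm\'an, one fills each $J_{2n}$ while keeping one point open. This is done so that the number of decided $1$'s below $J_{2n+1}$ is $\equiv t_n \pmod{m(n)}$, where $C_n\cap(C_n-_{m(n)}t_n)=\varnothing$; this forces $Z\cap H_n=\varnothing$. This is an infinite modification, one adjustment per block.

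In both cases the contradiction is $Z\cap U=\varnothing$ for some $U\in\mathcal U$: the unshifted condition is used in the first case and the shifted one in the second. It is not that ``$x$ is almost contained in both $Y$ and its shift''. Only the Silver coordinate of the iteration condition is modified. No fusion is needed, since the Sacks property already supplies all the ground-model information required.
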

\begin{proof}

We shall use our first characterisation of Laver ultrafilters from Definition \ref{def:laver}. Working in the generic extension, let $s:\omega \to 2$ denote the Silver generic real and for $n\in\omega$ and $i<m(n)$, define
$$\mathcal{D}^n_i=\bigcup\{I_j(s) : j\equiv_{m(n)}i\},$$
and $\mathcal{D}^n=\{\mathcal{D}^n_i : i<m(n)\}$. We claim that the sequence of partitions $\{\mathcal{D}^{n} : n\in\omega\}$ witnesses the failure of Laverness of any ultrafilter extending $\mathcal{U}$ in the generic extension by $\mathbb{SI}*\undertilde{\mathbb{R}}$. It suffices to prove the following.
\begin{claim}
Assume that there is an $(\mathbb{SI}*\undertilde{\mathbb{R}})$-name $\undertilde{Z}$ for a subset of $\omega$, a name $\undertilde{h}$ for a function such that $\undertilde{h}(n)\in[m(n)]^n$ for all $n\in\omega$, and a condition $\langle p,\undertilde{q}\rangle\in \mathbb{PS}*\undertilde{\mathbb{R}}$ such that $$\langle p,\undertilde{q}\rangle \Vdash (\forall n)(\forall i< m(n)) (\undertilde{Z}\cap \mathcal{D}^n_i\neq \varnothing \Rightarrow i\in \undertilde{h}(n)).$$
Then there is some $U\in\mathcal{U}$ and $\langle p', \undertilde{q}\rangle\leq \langle p, \undertilde{q}\rangle$ such that $\langle p', \undertilde{q}\rangle \Vdash U\cap \undertilde{Z}=\varnothing$.
\end{claim}
\begin{proof}[Proof of claim]
Assume $\undertilde{Z}, \undertilde{h}, \langle p,\undertilde{q}\rangle$ are as in the statement. By the Sacks property, we may assume that there is a function $P$ defined over $\omega$ in the ground model such that $\forall n \in \omega: \; P(n)\in [m(n)]^{n^2}$, satisfying
$$\langle p,\undertilde{q}\rangle \Vdash (\forall n)(\forall i< m(n)) (\undertilde{Z}\cap \mathcal{D}^n_i\neq \varnothing \Rightarrow i\in P(n)).$$
Choose an interval partition $\mathcal{J}=\{J_i : i\in\omega\}$ such that for all $n\in\omega$ and $j\in 2$, we have $|J_{2n+j}\cap \operatorname{cod}(p)|> m(n)$. Assume without loss of generality that $W=\bigcup_{n\in\omega}J_{2n+1}\in\mathcal{U}$.

Let $p_1\leq p$ be an extension such that $J_{2n+1}\subseteq \dom(p_1)$ for all $n\in\omega$, and $|\operatorname{cod}(p)\cap J_{2n}|=m(n)$ for all $n$. Note that $|\operatorname{cod}(p_1)\cap \min(J_{2n})|=v(n)$ for every $n\in\omega$. Putting $C_n=P(n)-_{m(n)}(v(n)+2)$, we have $|C_n|\leq n^2\cdot(v(n)+2)$ for each $n\in\omega$. Finally, let $H_n=J_{2n+1}\cap \bigcup\{I_j(p_1) : j\in_{m(n)} C_n\}$ for all $n\in\omega$. As in \cite{chodguz-ppoints}, we will split the proof into two cases now. We only present the first case here, the reader may read the proof of Proposition 5 in \cite{chodguz-ppoints} for the argument of the other case.

\emph{Case 1.} Assume that $U=\bigcup \{J_{2n+1}\setminus H_n : n\in\omega\}\in\mathcal{U}$. Find $p_2\leq p_1$ such that
\begin{enumerate}[label=(\roman*)]
    \item $p_2^{-1}(1)=p_1^{-1}(1)$,
    \item $|\operatorname{cod}(p_2)\cap J_{2n}|=1$ for all $n\in\omega$.
\end{enumerate}
But then,
\begin{enumerate}[label=(\roman*)]
    \item $\mathcal{I}(p_2)=\mathcal{I}(p_1)$,
    \item $|\operatorname{cod}(p_2)\cap \min(J_{2n+1})|=n+1$,
    \item If $I_j(p_2)\subseteq J_{2n+1}$, then $I_j(p_2)\subseteq \operatorname{dom}(p_2)$.
\end{enumerate}
It follows that, in the generic extension by $\mathbb{SI}*\undertilde{\mathbb{R}}$, for the unique $i\leq n+1$ with $I_j(p_2)=I_{j+i}(s)$, if $\undertilde{Z}\cap I_{j+i}(s)\neq\varnothing$, then $j\in_{m(n)} C_n$. Therefore, $\langle p_2, \undertilde{q}\rangle\Vdash \undertilde{Z}\subseteq (\bigcup_{n\in\omega} H_n)\cup(\bigcup_{n\in\omega}J_{2n})$. This concludes the case since $U\cap ((\bigcup_{n\in\omega} H_n)\cup(\bigcup_{n\in\omega}J_{2n}))=\varnothing$.

\emph{Case 2.} This is the case of $\bigcup_n H_n\in\mathcal{U}$, and we refer the reader to the corresponding case in \cite{chodguz-ppoints}.
\end{proof}
This concludes the proof of Proposition \ref{prp:silver}.
\end{proof}

Note that the Silver model contains $Q$-points, and hence rapid ultrafilters. It follows that the existence of a rapid ultrafilter does not imply the existence of a Laver ultrafilter.

Finally, we prove that it is consistent that there are no $P$-points while Laver ultrafilters exist generically. By Proposition \ref{prp:lowerboundnon}, it suffices to find a model without $P$-points in which $\text{non}(\mathcal{NA})=\mathfrak{c}$ holds.

\begin{thm}\label{thm:lavernop-point}
It is consistent that there are no $P$-points while Laver ultrafilters exist generically.
\end{thm}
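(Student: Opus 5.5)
By Proposition \ref{prp:lowerboundnon}, it suffices to produce a model of \textsf{ZFC} with no $P$-points in which $\text{non}(\mathcal{NA})=\mathfrak{c}$. Then $\mathfrak{ge}(\text{Laver})=\mathfrak{c}$, and by the analogue of Observation 3.1 of \cite{brefla17} stated above, Laver ultrafilters exist generically. I would take the Silver model of Chodounsk\'y and Guzm\'an \cite{chodguz-ppoints} as the template. However, Theorem \ref{silverthm} shows that Laver ultrafilters themselves fail in the pure Silver model. So I would instead interleave Silver forcing with a forcing that makes $\text{non}(\mathcal{NA})$ large.

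Concretely, start with $\mathbf{V}\models\textsf{CH}$ and do a countable support iteration of length $\omega_2$ of proper forcings. At odd stages, force with Silver forcing $\mathbb{SI}$. At even stages, force with a proper, $\omega^\omega$-bounding poset $\mathbb{LOC}_{b,h}$-style, i.e.\ a \emph{tree-like localization forcing} adding a slalom $S\in\mathcal{S}(b,h)$ that localizes all ground-model elements of $\prod b$. It should be chosen with the Sacks property, so a creature-type or Sacks-like localization forcing for bounded $b$, as used in \cite{CARDONA_MEJÍA_RIVERA-MADRID_2025}. Ordinary localization forcing is not $\omega^\omega$-bounding, so a bounded, finitely branching version is needed. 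By bookkeeping, each pair $\langle b,h\rangle$ with $b$ in an intermediate model is handled cofinally often.

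In the final model $\mathfrak{c}=\omega_2$, and every family $F\subseteq\prod b$ of size $\omega_1$ appears at some intermediate stage by properness. It is therefore localized by a slalom added later, which gives $\mathfrak{b}^{\text{Lc}}_{b,h}=\omega_2$ for all $b$. Lemma \ref{lem-non(na)char} then yields $\text{non}(\mathcal{NA})=\mathfrak{c}$.

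For the absence of $P$-points, I would follow \cite{chodguz-ppoints}. The full iteration has the Sacks property, since each iterand does and the Sacks property is preserved under countable support iterations of proper forcings (Shelah). By the reflection argument quoted before Proposition \ref{prp:silver}, any $P$-point $\mathcal{U}$ in $\mathbf{V}[G]$ restricts to an ultrafilter $\mathcal{U}\cap\mathbf{V}[G_\alpha]\in\mathbf{V}[G_\alpha]$ for some $\alpha$ with $\mathbb{Q}_\alpha=\mathbb{SI}$. The remainder of the iteration is then a Silver step followed by a Sacks-property forcing. Proposition 5 of \cite{chodguz-ppoints}, the $P$-point analogue of Proposition \ref{prp:silver}, says that no ultrafilter of $\mathbf{V}[G_\alpha]$ extends to a $P$-point after $\mathbb{SI}*\undertilde{\mathbb{R}}$ when $\undertilde{\mathbb{R}}$ has the Sacks property. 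This is a contradiction. It is consistent with Theorem \ref{silverthm} because the $P$-point argument uses a different partition sequence, which the Laver property does not handle.

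The main obstacle is exhibiting a localization-type forcing that both adds slaloms localizing all ground-model members of $\prod b$ with width $h$ and has the Sacks property. Since $b$ is bounded and the slalom needs only to capture each ground element, I would first try a Sacks-style tree forcing. Its conditions are trees of partial slaloms in which the splitting at level $n$ is into many choices, each covering enough of $[b(n)]^{\leq h(n)}$ via a norm. This is in the spirit of Shelah's creature forcings and of the $\omega^\omega$-bounding poset alluded to in the remark after Corollary \ref{cor:inequalities}. The Sacks property would come from fusion, and the localization from genericity. If that fails, I would fall back on citing the forcing from \cite{CARDONA_MEJÍA_RIVERA-MADRID_2025} or \cite{paw} that increases $\text{non}(\mathcal{NA})$ and verify its Sacks property directly.
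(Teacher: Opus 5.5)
Your plan fails at its central requirement. No forcing that adds a slalom $S\in\mathcal{S}(b,h)$ localizing all ground-model members of $\prod b$ can have the Laver property, let alone the Sacks property, once $b(n)>(n+1)h(n)$ for infinitely many $n$. That is exactly the case that matters in Lemma \ref{lem-non(na)char}.

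To see this, note that $S$ is a real bounded by a ground-model function. So below any condition there are $p$ and a ground-model $T$ with $|T(n)|\leq n+1$ and $p\Vdash S(n)\in T(n)$ for all $n$. Then $W(n):=\bigcup T(n)$ is a ground-model subset of $b(n)$ of size at most $(n+1)h(n)$. Any ground-model $x\in\prod b$ with $x(n)\notin W(n)$ for infinitely many $n$ is then forced by $p$ not to be localized by $S$.

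The same argument applied to the whole iteration gives more. If $\mathbb{P}_{\omega_2}$ had the Laver property over a \textsf{CH} ground model, then $\prod b\cap\mathbf{V}$, a set of size $\omega_1$, would be localized by no slalom in $\mathbf{V}[G]$. Hence $\mathfrak{b}^{\text{Lc}}_{b,h}\leq\omega_1$ and $\text{non}(\mathcal{NA})=\omega_1<\mathfrak{c}$. So the two halves of your argument are incompatible. Neither a creature-style construction nor your fallback of checking the Sacks property of a known forcing that raises $\text{non}(\mathcal{NA})$ can succeed.

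The tension you noticed with Theorem \ref{silverthm} was the symptom, and your explanation of it is wrong. Proposition \ref{prp:silver} applies to every Sacks-property remainder. So if your iteration had the Sacks property, the same reflection argument would exclude Laver ultrafilters from your model altogether, not merely fail to handle a different partition sequence.

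What is missing is a $P$-point killer whose effect survives continuations that are merely $\omega^\omega$-bounding. Localization forcings can be $\omega^\omega$-bounding without having the Laver property, so this is the right level of requirement. This is the paper's route:
\begin{itemize}
\item At stages of cofinality $\omega_1$, it forces with Shelah's $\mathbb{Q}(\mathcal{U}_\alpha)$, where $\mathcal{U}_\alpha$ is a $P$-point guessed by $\diamondsuit$ on $\{\alpha<\omega_2:\operatorname{cf}(\alpha)=\omega_1\}$. This forcing prevents $\mathcal{U}_\alpha$ from extending to a $P$-point after any $\omega^\omega$-bounding tail.
\item At the other stages, it forces with Shelah's $\omega^\omega$-bounding localization forcing $\mathbb{Q}_b$.
\end{itemize}
Only $\omega^\omega$-boundedness of the countable support iteration is then needed.

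There is also a placement problem in your scheme. Reflection of an ultrafilter occurs at stages of cofinality $\omega_1$, which are limit, hence even, stages in your odd/even scheme. So the $P$-point killer has to sit at those stages, not at odd ones.
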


To prove this, we will interleave two rather well-known forcing notions in a countable support iteration of length $\omega_2$, both originally introduced by Shelah. Therefore, before we start the proof of Theorem \ref{thm:lavernop-point}, we first recall these two forcing notions in the following two lemmas:

\begin{lem}[Shelah \cite{she}, Chapter VI]
Let $\mathcal{U}$ be a $P$-point. There is a proper forcing notion $\mathbb{Q}(\mathcal{U})$ of cardinality $\mathfrak{c}$ with the following properties:
\begin{enumerate}[label=(\roman*)]
    \item $\mathbb{Q}(\mathcal{U})$ is $\omega^\omega$-bounding (i.e., it does not add unbounded reals), and
    \item For any $\mathbb{Q}(\mathcal{U})$-name $\undertilde{\mathbb{R}}$ for a forcing notion such that $\mathbb{Q}(\mathcal{U})\Vdash ``\undertilde{\mathbb{R}} \;\text{is $\omega^\omega$-bounding}"$, we have $\mathbb{Q}(\mathcal{U})*\undertilde{\mathbb{R}}\Vdash ``\mathcal{U} \; \text{cannot be extended to a $P$-point}"$.
\end{enumerate}
\end{lem}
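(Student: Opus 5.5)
Since this is Shelah's result and the paper only quotes it, I would reconstruct it along the lines of \cite{she}, Chapter VI, and of the Silver argument above (Chodounsk\'y--Guzm\'an). The plan has three parts: define $\mathbb{Q}(\mathcal{U})$ as a creature-type forcing built from $\mathcal{U}$, prove properness and $\omega^\omega$-bounding by a fusion argument, and then show that a generic partition together with the bounding property of $\undertilde{\mathbb{R}}$ kills every $P$-point extension of $\mathcal{U}$.

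\textbf{Definition of the forcing.} A condition is a sequence $p=\langle (a_n,w_n): n\in\omega\rangle$ such that:
\begin{itemize}
\item[(a)] the $a_n$ are consecutive finite intervals of $\omega$;
\item[(b)] $w_n$ is a finite family of subsets of $a_n$, with a norm $\|w_n\|$ measuring how many times the family can be halved while still meeting every $\mathcal{U}$-large set on $a_n$;
\item[(c)] $\|w_n\|\to\infty$;
\item[(d)] for every $x\in\mathcal{U}$ and all but finitely many $n$, some member of $w_n$ is contained in $x\cap a_n$, or the analogous requirement phrased with sets in $\mathcal{U}$.
\end{itemize}
Extension allows merging finitely many consecutive blocks and shrinking the $w_n$ without dropping the norm too fast. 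The generic object is a sequence of finite sets $Z_n\subseteq a_n$, one chosen from each $w_n$. It yields a countable family of sets $Y_k$, for instance $Y_k=\bigcup_{n\ge k}(a_n\setminus Z_n)$ refined mod $k$ as with the $\mathcal{D}^n_i$ in Proposition \ref{prp:silver}. Every ultrafilter extending $\mathcal{U}$ must contain these sets, but no set compatible with $\mathcal{U}$ can be a pseudo-intersection of them. The size bound $|\mathbb{Q}(\mathcal{U})|=\mathfrak{c}$ is immediate from the definition.

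\textbf{Properness and $\omega^\omega$-bounding.} I would prove both via the standard fusion lemma for norms. Given a name $\undertilde{\tau}$ for a natural number and a condition $p$, I would find $q\le p$ with the same first $k$ blocks that decides $\undertilde{\tau}$ up to finitely many possibilities, that is, up to the choices made in those first $k$ blocks. The key tool is the pure-decision-like property of halving norms: colour the possibilities and keep the half of larger norm. This is where the $P$-point assumption enters. Condition (d) must survive countably many thinnings. Given the countably many $\mathcal{U}$-sets produced along the fusion, I would take a pseudo-intersection in $\mathcal{U}$ and then drop finitely many blocks, so that the fusion limit is again a condition. Iterating over the countably many names in an elementary submodel gives properness, and the finite bound on possibilities gives bounding.

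\textbf{Killing $P$-points in $\mathbb{Q}(\mathcal{U})*\undertilde{\mathbb{R}}$.} This is the step I expect to be the main obstacle. Suppose $\mathcal{V}\supseteq\mathcal{U}$ is a $P$-point in the final extension. Then $\mathcal{V}$ contains some $X\subseteq^{\ast}Y_k$ for every $k$. Because $\mathbb{Q}(\mathcal{U})*\undertilde{\mathbb{R}}$ is $\omega^\omega$-bounding, the function $k\mapsto$ (the point beyond which $X\subseteq Y_k$) is dominated by a ground-model function. From this, together with a ground-model interval partition $\mathcal{J}$ coarser than the blocks, I would get a ground-model description of where $X$ can live.

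Next I would argue exactly as in Case 1 and Case 2 of the claim in Proposition \ref{prp:silver}. Split $\omega$ into even and odd $\mathcal{J}$-blocks and use the ultrafilter $\mathcal{U}$ to pick the relevant half. Then, depending on which of two ground-model sets lies in $\mathcal{U}$, extend the $\mathbb{Q}(\mathcal{U})$-part of the condition so that $X$ is forced disjoint from some $U\in\mathcal{U}$, which contradicts $\mathcal{U}\subseteq\mathcal{V}$.

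The hardest point is arranging that the extension fixes the generic $Z_n$ on the relevant blocks while keeping the norms large. This is precisely why the norms must be designed so that, for any $\mathcal{U}$-set, the sub-creatures avoiding it still have large norm, and I would build condition (d) around that requirement.
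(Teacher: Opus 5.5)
The paper gives no proof of this lemma; it is quoted from Shelah (\emph{Proper and Improper Forcing}, Ch.~VI). Your sketch therefore has to stand on its own, and as written it has genuine gaps.

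First, the forcing is never actually defined, and the way $\mathcal{U}$ is supposed to enter does not hang together. A non-principal $\mathcal{U}$ gives no notion of a ``$\mathcal{U}$-large set on $a_n$'' for a finite block $a_n$. Clause (d) is vacuous. Suppose $N_0=\{n:\emptyset\notin w_n\}$ were infinite, and split it into infinite sets $N_1\sqcup N_2$. One of $\omega\setminus\bigcup_{n\in N_1}a_n$ and $\omega\setminus\bigcup_{n\in N_2}a_n$ lies in $\mathcal{U}$, and that set misses $a_n$ for infinitely many $n\in N_0$, violating (d). So (d) forces $\emptyset\in w_n$ for almost all $n$ and says nothing. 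Consequently the $P$-point hypothesis has nothing to act on in your fusion argument. Your closing wish, that sub-creatures avoiding some $U\in\mathcal{U}$ keep large norm, contradicts any non-vacuous version of (d): take $x=U$.

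Second, the target object is self-contradictory as stated. Suppose each $Y_k$ lay in every ultrafilter extending $\mathcal{U}$. Then $Y_k\supseteq x_k$ for some $x_k\in\mathcal{U}$. By properness, $\{x_k:k\in\omega\}$ is covered by a countable ground-model subfamily of $\mathcal{U}$. A pseudo-intersection of that subfamily lying in $\mathcal{U}$ is then a pseudo-intersection of all the $Y_k$ compatible with $\mathcal{U}$, contradicting your claim. The statement has to be about finite partitions from which $\mathcal{V}$ picks one piece each. Showing that the picked pieces have no pseudo-intersection in $\mathcal{V}$ is then the whole content.

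Third, and most seriously, the killing step cannot be done ``exactly as in Case 1 and Case 2'' of Proposition \ref{prp:silver}.
\begin{enumerate}[label=(\alph*)]
\item That argument starts by using the Sacks property (really, its Laver-property half) of the tail forcing. This traps the indices of the pieces met by $Z$ in a ground-model slalom $P(n)\in[m(n)]^{n^2}$.
\item The sets $C_n$, the case distinction, and the shift $s$ with $C_n\cap(C_n-_{m(n)}\{s\})=\emptyset$ all rely on $|P(n)|$ being tiny compared to $m(n)$.
\item Statement (ii) assumes only that the tail forcing is $\omega^\omega$-bounding; random forcing, for instance, is allowed. Then all you can extract is a ground-model bound on the finite exceptions of your pseudo-intersection $X$.
\item The indices chosen by $\mathcal{V}$ are already bounded by $m(n)$, so $\omega^\omega$-bounding tells you nothing about them. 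Without the Laver property you cannot expect a ground-model cover smaller than $P(n)=m(n)$.
\item With $P(n)=m(n)$, $C_n$ contains every residue, so $H_n=J_{2n+1}$ and Case 1 never occurs. The shift required in Case 2 does not exist either.
\end{enumerate}
Moreover, the Silver moves decide the generic on all of $\bigcup_n J_{2n+1}\in\mathcal{U}$. That is exactly what the $\mathcal{U}$-largeness you want to impose on conditions would forbid. Your last paragraph restates this tension as a design wish rather than resolving it.

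The missing idea is a design of $\mathbb{Q}(\mathcal{U})$ where a mere ground-model bound suffices to kill every $P$-point extension, while the $P$-point property still gives properness and $\omega^\omega$-bounding via fusion. That is the real content of Shelah's theorem, and your proposal leaves exactly that open.
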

\begin{lem}[Shelah \cite{she-vive}, also see \cite{abraham2009proper} and \cite{laflamme-zap} for slightly different representations]
Let $b\in \omega^\omega$ and set $h(n):=n+1$. There is a proper, $\omega^\omega$-bounding forcing notion $\mathbb{Q}_b$ of cardinality $\mathfrak{c}$ such that
\begin{equation}\label{eq:genericslalom}
\mathbb{Q}_b\Vdash \exists S \in \mathcal{S}(b,h)\; \forall x \in \left(\prod b\right)^{\mathbf{V}}: x \in^{\ast}S. \tag{\textasteriskcentered}
\end{equation}
\end{lem}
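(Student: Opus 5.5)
The plan is to realise $\mathbb{Q}_b$ as a creature-type (norm) forcing and to verify (\ref{eq:genericslalom}), properness and $\omega^\omega$-boundedness by a fusion argument. We may assume $b(n)\geq 1$ for all $n$.

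\textbf{The forcing.} For a nonempty finite $P\subseteq[b(n)]^{\leq n+1}$, let $\mu(P)$ be the largest $k\leq n+1$ such that every $A\subseteq b(n)$ with $|A|\leq k$ is contained in some member of $P$. Set $\operatorname{nor}(P):=\log_2(\mu(P)+1)$. Conditions of $\mathbb{Q}_b$ are sequences $p=\langle P_n:n\in\omega\rangle$ with $\emptyset\neq P_n\subseteq[b(n)]^{\leq n+1}$ and $\operatorname{nor}(P_n)\to\infty$. The order is $q\leq p$ iff $Q_n\subseteq P_n$ for all $n$. Since each $P_n$ is finite, $|\mathbb{Q}_b|\leq\mathfrak{c}$. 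Writing $\undertilde{S}(n)$ for the common element of the $Q_n$ with $Q_n$ a singleton, $q\in G$, the generic object is $\undertilde{S}\in\mathcal{S}(b,h)$; it is total because the set of conditions in which a given $P_n$ is a singleton is dense (shrinking one coordinate does not affect $\operatorname{nor}\to\infty$).

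\textbf{Capturing ground-model reals.} Fix $x\in(\prod b)^{\mathbf V}$ and $p$. Put $Q_n:=\{S\in P_n: x(n)\in S\}$ whenever $\mu(P_n)\geq 1$, and $Q_n:=P_n$ otherwise. If $A$ has $|A|\leq\mu(P_n)-1$, then $A\cup\{x(n)\}$ is covered by some $S\in P_n$, and this $S$ lies in $Q_n$. Hence $\mu(Q_n)\geq\mu(P_n)-1$, so $q\leq p$ is still a condition. Moreover $q\Vdash x(n)\in\undertilde{S}(n)$ for all $n$ with $\mu(P_n)\geq 1$, which is cofinitely many. Density then gives (\ref{eq:genericslalom}).

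\textbf{Bigness.} The key combinatorial fact concerns splitting a coordinate. If $P=P^0\cup P^1$, then $\mu(P^i)\geq\lfloor\mu(P)/2\rfloor$ for some $i$. Indeed, otherwise pick uncovered sets $A_0,A_1$ of size at most $\mu(P)/2$; their union is covered by some $S\in P$, and $S$ lies in one of the $P^i$, contradicting the choice of $A_i$. Iterating, a partition of $P$ into $2^j$ pieces leaves a piece whose norm dropped by at most about $j$.

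\textbf{Fusion and pure decision.} Define $q\leq_m p$ iff $q\leq p$, $Q_i=P_i$ for $i<m$, and $\operatorname{nor}(Q_n)\geq m$ from some point on. Fusion sequences $p_0\geq_0 p_1\geq_1\cdots$ then have lower bounds. The main step is the following. Given a name $\undertilde{\alpha}$ for an ordinal, $p$ and $m$, there should be $q\leq_m p$ and an $N$ such that each of the finitely many possible choices of $\langle S_i:i<N\rangle\in\prod_{i<N}Q_i$ forces a value of $\undertilde{\alpha}$ over $q$ (or at least decides it within a finite set).

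I would prove this first for one choice of the initial part, using a ``halving'' sweep. Let $M:=\prod_{i<k}|P_i|$ be the number of possible choices below the current coordinate $k$. Go to a later coordinate $k$ where $\operatorname{nor}(P_k)>m+\log_2 M$ and handle the choices one at a time, each time splitting $P_k$ according to which part admits an extension deciding $\undertilde{\alpha}$, and keeping a large half via bigness. After the $M$ choices are handled, the norm at $k$ has dropped by at most $\log_2 M$, so it stays above $m$. I expect this bookkeeping to be the main obstacle: the decisions obtained at coordinate $k$ must be propagated back to the finitely many initial segments while the norms beyond $k$ are kept above $m$. That is exactly why the logarithmic norm and bigness are used.

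\textbf{Conclusions.} Applying the decision lemma along a fusion sequence with countably many names (those from a countable elementary submodel) yields properness. Applying it to the names $\undertilde{f}(n)$ gives, below the fusion limit, a finite set of possible values for each $\undertilde{f}(n)$, so a ground-model bound; this is $\omega^\omega$-boundedness.
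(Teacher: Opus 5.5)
\textbf{There is a genuine gap, and it sits in the step you call bookkeeping.} Your forcing is a \emph{product} forcing: the set $Q_n$ of admissible values at coordinate $n$ cannot depend on the earlier choices $S|_n$. For fast-growing $b$ this forcing is not $\omega^\omega$-bounding, so no decision lemma of the kind you describe can hold.

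Take $b$ with $b(n)\geq 4(n+1)$ and an injection $c_n\colon\prod_{i<n}[b(i)]^{\leq i+1}\to b(n)$. Let $f(j)$ name the least $n\geq j$ with $c_n(S|_n)\in S(n)$; call such an $n$ a success.
\begin{itemize}
\item \emph{$f$ is forced to be total.} Given $r$ and $n\geq j$ with $\mu(R_n)\geq 1$, freeze the first $n$ coordinates to a single $\tau$ and shrink $R_n$ to its members containing $c_n(\tau)$.
\item \emph{What bounding $f(j)$ requires.} Finitely many singleton coordinates are allowed. Hence $q\Vdash f(j)\leq N$ iff every $\tau\in\prod_{i<j}Q_i$ is bad, i.e.\ every extension of $\tau$ in $\prod_{i\leq N}Q_i$ has a success in $[j,N]$.
\item \emph{Counting bad nodes.} Let $B_n$ be the set of bad nodes of length $n$. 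Since $c_n$ is injective and $|A|\leq n+1$, at most $n+1$ of them have $c_n(\tau)\in\bigcap Q_n$. For every other $\tau\in B_n$, all children $\tau^{\smallfrown}A$ with $c_n(\tau)\notin A$ lie in $B_{n+1}$. Each $A$ contains at most $n+1$ of the distinct values $c_n(\tau)$, so double counting gives $|B_n|\leq 3(n+1)+2|B_{n+1}|/|Q_n|$.
\item \emph{Conclusion.} Once $\mu(Q_n)\geq 1$ we have $|Q_n|\geq b(n)/(n+1)\geq 4$. Since $B_{N+1}=\emptyset$, this gives $|B_j|\leq 6j+12$ independently of $N$, while $|\prod_{i<j}Q_i|$ grows exponentially in $j$. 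So for large $j$ no $N$ works, and no condition forces $f\leq^{\ast}g$ for a ground-model $g$.
\end{itemize}
Thus your $\mathbb{Q}_b$ adds an unbounded real. That breaks its use alongside Shelah's $\mathbb{Q}(\mathcal{U})$ in Theorem \ref{thm:lavernop-point}. Your halving sweep cannot be completed for the same reason: different choices below coordinate $k$ demand incompatible shrinkings of later coordinates.

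Separately, your $\leq_m$ does not make fusion limits conditions. Each $p_{m+1}\leq_m p_m$ may collapse coordinate $m$ to a singleton. You need norms $\geq m$ on \emph{all} coordinates beyond the frozen part.

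\textbf{The remedy is the paper's tree version.} Conditions are perfect subtrees $T\subseteq\bigcup_n\prod_{k<n}[b(k)]^{k+1}$ such that, for every $m$, the successor sets at all sufficiently long nodes are $m$-big, which is your $\mu\geq m$. Since successor sets may now depend on the node:
\begin{itemize}
\item The name above becomes harmless: thin $\operatorname{succ}_T(\tau)$ to the members containing $c(\tau)$, at a cost of one in bigness.
\item Your capturing argument goes through node by node, exactly as in the paper, using that an $(m+1)$-big set stays $m$-big after restricting to the members containing a given point.
\item Properness and $\omega^\omega$-boundedness come from a Laver/Miller-style fusion. Your splitting lemma (one piece of a two-piece partition keeps about half the bigness) plays the role of the ultrafilter there. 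The paper defers this part to Lemma 3.12 of \cite{abraham2009proper}.
\end{itemize}
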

\begin{proof}
Let $b\in\omega^\omega$. We may assume that $b$ is strictly increasing. Set $\mathbb{T}_b=\bigcup_{n\in\omega}\prod_{k<n} [b(k)]^{k+1}$. For $k,m\in\omega$, call a set $A\subseteq [b(k)]^{k+1}$ \emph{$m$-big} if for every $x\subseteq b(k)$ of size $\leq m$, there is $a\in A$ such that $x\subseteq a$. A perfect subtree $T\subseteq \mathbb{T}_b$ will be called an \emph{$m$-tree} if $\operatorname{succ}_T(t)$ is $m$-big for every $\operatorname{stem}(T)\subseteq t\in  T$.

We define the partial order $\mathbb{Q}_b$ to consist of perfect subtrees $T\subseteq \mathbb{T}_b$ such that for any $m\in\omega$, there is $n\in\omega$ that satisfies: $t\in T \land |t|\geq n\implies \operatorname{succ}_T(t)$ is $m$-big. We order $\mathbb{Q}_b$ via inclusion. Using the following observation, it is not hard to check that $\mathbb{Q}_b$ is proper and $\omega^\omega$-bounding (see, e.g., Lemma 3.12 of \cite{abraham2009proper}):
\begin{obs}
If $m>0$, $n\in b(k)$, and $A\subseteq [b(k)]^{k+1}$ is $(m+1)$-big, then $\{a\in A : n\in a\}$ is $m$-big.
\end{obs}
To finish the proof, we show that the generic slalom $\undertilde{S}\in\mathcal{S}(b,h)$ introduced by $\mathbb{Q}_b$, i.e., the generic branch through $\mathbb{T}_b$, satisfies (\ref{eq:genericslalom}).

For this purpose, let $x\in \prod b$ be a real in the ground model and let $T\in\mathbb{Q}_b$ be a condition. Set $s_0=\operatorname{stem}(T)$ and $|s_0|=n_0$. By extending if necessary, we may assume that $T$ is a $2$-tree. Using the previous observation, we may inductively define $T'\leq T$ such that
\begin{enumerate}[label=(\roman*)]
    \item $T'\in \mathbb{Q}_b$,
    \item $\operatorname{stem}(T')=s_0$ and $T'$ is a $1$-tree,
    \item For all $s_0\subseteq s\in T'$ and $n_0\leq n\in\operatorname{dom}(s): x(n)\in s(n)$.
\end{enumerate}
It follows that $T'\Vdash \forall n\geq n_0 :\; x(n)\in \undertilde{S}(n)$.
\end{proof}
Using these facts, we now conclude the proof of Theorem \ref{thm:lavernop-point}.

\begin{proof}[Proof of Theorem \ref{thm:lavernop-point}]
We start with a ground model satisfying $\mathbf{V}\vDash \mathfrak{c}=\omega_1 \land 2^{\omega_1}=\omega_2 \land \diamondsuit_{\{\alpha<\omega_2\; :\; \operatorname{cf}(\alpha)=\omega_1\}}$. Using standard bookkeeping techniques, we define $\langle \mathbb{P}_{\alpha}, \undertilde{\mathbb{Q}}_{\beta} : \alpha\leq\omega_2, \beta<\omega_2\rangle$, a countable support iteration of proper and $\omega^\omega$-bounding forcing notions of length $\omega_2$, as follows. At stages $\alpha<\omega_2$ of uncountable cofinality, we let $\undertilde{\mathcal{U}}_{\alpha}$ be a $\mathbb{P}_{\alpha}$-name for a $P$-point coded by the $\diamondsuit_{\{\alpha<\omega_2\; :\; \operatorname{cf}(\alpha)=\omega_1\}}$ sequence, if the diamond sequence indeed codes such a name at $\alpha$. If not, we let $\undertilde{\mathcal{U}}_{\alpha}$ be a name for any $P$-point (this is possible because \textsf{CH} holds in the extension by $\mathbb{P}_{\alpha}$). We then define $\undertilde{\mathbb{Q}}_{\alpha}$ to be a $\mathbb{P}_{\alpha}$-name for $\mathbb{Q}(\undertilde{\mathcal{U}_{\alpha}})$.

For the other stages, again by standard bookkeeping techniques, we find in the ground model an enumeration $\langle \undertilde{b}_{\alpha} : \alpha\in\{\alpha<\omega_2 : \operatorname{cf}(\alpha)\neq\omega_1\}\rangle$ such that each $\undertilde{b}_{\alpha}$ is a $\mathbb{P}_{\alpha}$-name for a function in $\omega^\omega$, and every such $\mathbb{P}_{\omega_2}$-name appears in the sequence cofinally often. Then, at stage $\alpha\in\{\alpha<\omega_2 : \operatorname{cf}(\alpha)\neq\omega_1\}$, we define $\undertilde{\mathbb{Q}}_{\alpha}$ to be a $\mathbb{P}_{\alpha}$-name for $\mathbb{Q}_{\undertilde{b}_{\alpha}}$. Therefore, by the previous two lemmas and Lemma \ref{lem-non(na)char}, there are no $P$-points in the generic extension by $\mathbb{P}_{\omega_2}$, but $\text{non}(\mathcal{NA})=\mathfrak{c}=\omega_2$ holds. Consequently, by Proposition \ref{prp:lowerboundnon}, Laver ultrafilters generically exist in the extension.
\end{proof}

\section*{Open Problems}

We conclude the paper by listing some problems that remain unsolved.

\begin{problem}
Does \textsf{MA} imply the existence of a hereditarily rapid, countable closed ultrafilter that is not a Laver ultrafilter?
\end{problem}

\begin{problem}
Is $\max\{\text{non}(\mathcal{NA}), \text{cov}(\mathcal{M})\}<\mathfrak{ge}(\text{Laver})$ consistent?
\end{problem}

As the reader may have observed -- although we showed that Laver ultrafilters do not exist generically in the random model -- we were unable to determine whether they exist in the random model at all.

\begin{problem}
Do Laver ultrafilters exist in the random model?
\end{problem}


\let\OLDthebibliography\thebibliography
\renewcommand\thebibliography[1]{
  \OLDthebibliography{#1}
  \setlength{\parskip}{0pt}
  \setlength{\itemsep}{3pt plus 0.3ex}
}

\bibliographystyle{plain}
\bibliography{rapid}

\end{document}